\documentclass[11pt,a4paper]{amsart}
\usepackage{amssymb,amsmath,amsthm}

\usepackage{graphicx}

\usepackage{epsf}

\newtheorem{theorem}{Theorem}[section]
\newtheorem{lemma}{Lemma}[section]
\newtheorem{corollary}{Corollary}[section]

\newtheorem{prop}{Proposition}[section]

\newcommand{\de}{\delta}
\newcommand{\al}{\alpha}
\newcommand{\be}{\beta}
\newcommand{\g}{\gamma}
\newcommand{\G}{\Gamma}
\newcommand{\la}{\lambda}
\newcommand{\La}{\Lambda}
\newcommand{\tr}{\triangle}
\newcommand{\eps}{\varepsilon}
\newcommand{\E}{\mathbb{E}}
\newcommand{\z}{{\mathbb{Z}^2}}
\newcommand{\rr}{\mathbb{R}^2}
\newcommand{\PP}{\mathbb{P}}
\newcommand{\A}{\mathrm{A}}
\newcommand{\cc}{\mathcal{C}}
\newcommand{\dist}{\textrm{dist}}
\newcommand{\conv}{\textrm{conv}}
\newcommand{\J}{\mathcal{J}}
\newcommand{\Ge}{\mathcal{G}}

\numberwithin{equation}{section}

\begin{document}

\title{Longest convex chains}

\author{Gergely Ambrus, Imre B\'ar\'any}

\keywords{Random points, convex chains, concentration, limit
shape}

\subjclass[2000]{Primary 60D05, 52B22}

\begin{abstract}
Assume $X_n$ is a random sample of $n$ uniform, independent points
from a triangle $T$. The longest convex chain, $Y$, of $X_n$ is
defined naturally (see the next paragraph). The length $|Y|$ of $Y$
is a random variable, denoted by $L_n$. In this paper we determine
the order of magnitude of the expectation of $L_n$. We show further
that $L_n$ is highly concentrated around its mean, and that the
longest convex chains have a limit shape.
\end{abstract}

\maketitle

\section{Introduction and results}\label{introd}

Let $T \subset \rr$ be a triangle with vertices $p_0,p_1,p_2$ and
let $X \subset T$ be a finite point set. A subset $Y \subset X$ is a
{\sl convex chain} in $T$ (from $p_0$ to $p_2$) if the convex hull
of $Y \cup\{p_0,p_2\}$ is a convex polygon with exactly $|Y|+2$
vertices. A convex chain $Y$ gives rise to the polygonal path $C(Y)$
which is the boundary of this convex polygon minus the edge between
$p_0$ and $p_2$. The {\sl length} of the convex chain $Y$ is just
$|Y|$.

For most part of this paper we assume that $X=X_n$ is a random
sample of $n$ random, uniform, independent points from $T$. Let
$L_n$ be the length of a longest convex chain in $X_n$. The random
variable $L_n$ is a distant relative of the ``longest increasing
subsequence'' problem, cf. \cite{ad}. In this paper we establish
several properties of $L_n$. The first concerns its expectation, $\E
L_n$.

\begin{theorem} \label{limit}
There exists a positive constant $\al$ for which
$$\lim_{n \rightarrow \infty} \frac{ \E L_n}{\sqrt[3]{n}}=\al \,.$$
\end{theorem}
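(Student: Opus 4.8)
The plan is to pass to a Poissonized version of the problem, reduce everything to a single function of the expected number of points via affine invariance, and then establish the limit by combining a clean superadditive lower bound with a matching subadditive upper bound. First I would use that affine maps preserve convex chains and carry the uniform (resp.\ Poisson) law on $T$ to the corresponding law on the image triangle, so that the quantity of interest depends on $T$ only through the expected number of points. Concretely, replace $X_n$ by a Poisson process of intensity $\la$ and set $G(\mu)=\E[\,\text{longest convex chain}\,]$ where $\mu=\la\,|T|$; by affine invariance $G$ is a well-defined function of $\mu$ alone, independent of the shape of the triangle and of which pair of vertices serves as the endpoints of the chains. Returning from $G(\mu)$ to $\E L_n$ is then routine: $L_n$ is nondecreasing in the sample (adding points cannot shorten the longest chain), the Poisson count concentrates around $n$ with fluctuation $O(\sqrt n)$, and since $n^{1/3}$ is slowly varying this perturbs the cube-root scale by only $O(n^{-1/6})=o(n^{1/3})$. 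Hence it suffices to prove $G(\mu)/\mu^{1/3}\to\al$.

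Next I would pin down the order of magnitude, which also yields $\al>0$. For the lower bound, place a chain greedily along the parabola inscribed in $T$ and tangent to the sides $p_0p_1$ and $p_2p_1$ at $p_0,p_2$, partitioning a neighbourhood of it into anisotropic boxes of width $\sim\mu^{-1/3}$ and height $\sim\mu^{-2/3}$ and selecting one point from each; a first-moment computation gives $G(\mu)\ge c_1\mu^{1/3}$. For the upper bound, an economic-cap (covering) estimate bounds the number of points of the process that can simultaneously lie in convex position, giving $G(\mu)\le c_2\mu^{1/3}$. Thus $a(\mu):=G(\mu)/\mu^{1/3}$ stays bounded away from $0$ and $\infty$.

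The engine for the limit is a superadditive decomposition. Normalize so that the inscribed parabola is $y=4x(1-x)$ with $p_0=(0,0)$, $p_2=(1,0)$, $p_1=(\tfrac12,2)$. Let $q=(\tfrac12,1)$ be the midpoint of the parabola and $\ell$ its tangent there; $\ell$ meets $p_0p_1$ at $A$ and $p_1p_2$ at $B$. The key geometric observation is that every point of the triangle $p_0Aq$ sees $q$ along a segment of slope at least $\mathrm{slope}(\ell)$ and every point of $qBp_2$ along a segment of slope at most $\mathrm{slope}(\ell)$, so the concatenation of any convex chain in $p_0Aq$ from $p_0$ to $q$ with any convex chain in $qBp_2$ from $q$ to $p_2$ is again a convex chain in $T$. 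A direct area computation gives $|p_0Aq|=|qBp_2|=\tfrac18|T|$, and since a Poisson process is independent on disjoint sets, affine invariance turns this into
$$G(\mu)\ \ge\ G(\mu/8)+G(\mu/8)=2\,G(\mu/8).$$
Because $2\cdot(1/8)^{1/3}=1$, this reads $a(\mu)\ge a(\mu/8)$ for every $\mu$; monotonicity along each geometric sequence $\mu=8^k\mu_0$ together with boundedness forces convergence, which delivers the $\liminf$.

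Finally I would obtain a matching upper bound by the dual, subadditive operation: any convex chain of $T$ crosses the vertical line $x=\tfrac12$, splitting into a convex chain of $T\cap\{x\le\tfrac12\}$ and one of $T\cap\{x\ge\tfrac12\}$, each with a free endpoint on the dividing segment. The main obstacle is exactly the reconciliation of the two bounds: the superadditive construction fixes the junction at $q$ and discards the mass above $\ell$ and below the chord $p_0qp_2$ (it uses only a quarter of the area), whereas the subadditive cut wastes nothing but lets the crossing point float, and without extra input these mismatched area budgets leave a gap and permit the bounded quantity $a(\mu)$ to oscillate in $\log\mu$. Closing the gap is where the emergent parabolic limit shape must be invoked: one has to show that an optimal chain crosses the dividing line within $o(\mu^{-1/3})$ of the parabola's crossing point, so that fixing the endpoint at $q$ and forfeiting the discarded caps costs only $o(\mu^{1/3})$, and symmetrically that allowing a free endpoint cannot increase the length to leading order. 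This squeezes $\liminf$ and $\limsup$ of $a(\mu)$ to a common value $\al>0$, and after de-Poissonization Theorem~\ref{limit} follows.
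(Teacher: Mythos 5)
Your decomposition at the single midpoint tangent gives only $G(\mu)\ge 2G(\mu/8)$, i.e.\ $a(8\mu)\ge a(\mu)$, and, as you yourself note, this forces convergence only along the lacunary sequences $\mu=8^k\mu_0$. It does not exclude oscillation in $\log\mu$: a profile such as $a(\mu)=\al\bigl(1+\de\sin(2\pi\log_8\mu)\bigr)$ with $\de$ small satisfies $a(8\mu)=a(\mu)$ and is also compatible with the monotonicity of $G$, so superadditivity plus boundedness genuinely do not suffice. The remedy you propose --- a subadditive cut at $x=\tfrac12$ reconciled by ``invoking the emergent parabolic limit shape'' --- is not carried out, and in the context of this paper it would be circular: the limit shape (Theorem~\ref{limsh}) is deduced from Theorem~\ref{limit} and the concentration results, not the other way around. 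So the proposal as written does not establish the existence of the limit.

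The paper closes exactly this gap by replacing your two-piece split with its $k$-piece generalization. By Corollary~\ref{areasum} (the equality case of Blaschke's inequality, Theorem~\ref{mobius}), the tangents to the special parabola cut $T$ into $k$ subtriangles of \emph{arbitrary} common area $t$ with $\sum\sqrt[3]{\A(T_i)}=\sqrt[3]{\A(T)}$, so $k\approx\sqrt[3]{\A(T)/t}$ for every $t$, not merely $t=\A(T)/8$. The ratio of the two scales thus becomes a free continuous parameter: one fixes $n$ with $\E L_n\ge(1-\eps)\overline{\al}\,n^{1/3}$, takes any much larger $N$ with $\E L_N\le(1+\eps)\underline{\al}\,N^{1/3}$, tiles $T$ by $k\approx(N/n_1)^{1/3}$ subtriangles each expected to contain about $n_1\approx n$ points, and concatenates the chains to get $\E L_N\ge(1-2\eps)\overline{\al}\,N^{1/3}$, contradicting $\underline{\al}<\overline{\al}$ for small $\eps$. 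Your other steps (Poissonization, the order-of-magnitude bounds, the observation that chains in tangentially arranged subtriangles concatenate to a chain in $T$) are sound and parallel the paper's; the missing idea is the variable-$t$ decomposition in place of the fixed dyadic-type one.
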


Theorem \ref{limit}, together with some geometric arguments based on
Theorem~\ref{mobius} below, implies that the longest convex chains
have a limit shape $\G$ in the following sense. Let $\cc(X_n)$ be
the collection of all longest convex chains from $X_n$. For every
$\eps>0$
\[
\lim _{n \to \infty} \PP \big(\dist(C(Y),\G)> \eps\mbox{ for some }Y
\in \cc(X_n)\,\big)=0,
\]
where $\dist(.,.)$ stands for the Hausdorff distance. In fact, the
statement of Theorem~\ref{limsh} is much stronger, because there
$\eps$ also converges to $0$. The limit shape turns out to be the
unique parabola arc $\G \subset T$ that is tangent to the sides
$p_0p_1$ at $p_0$ and $p_1p_2$ at $p_2$, see Figure~\ref{parab} a).
The parabola arc $\G$ will be called {\sl the special parabola} in
$T$.

The proof of the 'limit shape' result is based on the following
theorem, saying that $L_n$ is highly concentrated around its
expectation.

\begin{theorem}\label{conc1}
For every $\gamma>0$ there exists a constant $N$, such that for
every $n>N$
$$\PP \big(|L_n -  \E L_n| > \gamma \sqrt{\log n} \; n^{1/6}\big) <
n^{-\gamma^2/14}.  $$
\end{theorem}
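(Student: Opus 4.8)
The plan is to treat $L_n = f(x_1,\dots,x_n)$ as a function on the product probability space $T^n$ and apply Talagrand's concentration inequality for \emph{configuration functions} --- the same machinery that controls the longest increasing subsequence, the relative mentioned above. First I would verify the two structural properties. The function $f$ is $1$-Lipschitz with respect to changing a single coordinate: replacing one point of $X_n$ by another cannot increase $L_n$ by more than $1$, and it cannot decrease it by more than $1$ either, because deleting a single vertex from a convex chain leaves a (shorter) convex chain, so a longest chain loses at most one vertex. Second, $f$ is certifiable: if $f(x)\ge s$, then fixing the $s$ points of one witnessing chain guarantees $f\ge s$ no matter how the remaining coordinates are chosen, since those $s$ points remain a convex chain. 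Hence $L_n$ is a configuration function in Talagrand's sense (equivalently, one checks the self-bounding property, which would feed an entropy-method argument instead).

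With $m$ denoting a median of $L_n$, Talagrand's inequality then yields
$$\PP(L_n \ge m+t) \le 2\,e^{-t^2/(4(m+t))}, \qquad \PP(L_n \le m-t) \le 2\,e^{-t^2/(4m)}$$
for every $t>0$. Setting $t = \gamma \sqrt{\log n}\; n^{1/6}$, observe that $t = o(n^{1/3})$. The decisive input is a quantitative upper bound on the median: from the upper-bound half of the analysis behind Theorem~\ref{limit} one has $m \le (\al+o(1))\,n^{1/3}$, and with the explicit numerical bound on $\al$ this forces $4(m+t) \le 14\,n^{1/3}$ once $n$ is large. Substituting, $t^2/(4(m+t)) \ge (\gamma^2/14)\log n$, so the upper tail is at most $2\,n^{-\gamma^2/14}$, while the lower tail, carrying the smaller denominator $4m$, is even smaller.

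It remains to replace the median by the mean and to absorb the spurious factors. Because $L_n$ lies within $O(\sqrt{m}) = O(n^{1/6})$ of $m$ with overwhelming probability, the gap $|m - \E L_n|$ is of order $n^{1/6}$, which is $o(\sqrt{\log n}\,n^{1/6})$; thus, after slightly shrinking the effective constant in the exponent and enlarging the threshold $N$, the same tail estimate holds around $\E L_n$. Summing the two tails, the constant factor $2$ and the lower-order corrections are swallowed by the strict inequality for $n>N$, which produces $\PP(|L_n - \E L_n| > \gamma\sqrt{\log n}\,n^{1/6}) < n^{-\gamma^2/14}$.

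The main obstacle is not the concentration inequality itself but the explicit constant $14$: Theorem~\ref{limit} only asserts that $\E L_n/\sqrt[3]{n}$ converges, whereas the exponent requires a concrete numerical bound on $\al$ (equivalently on $\limsup m/n^{1/3}$) to be fed into the denominator $4(m+t)$. Securing that a priori bound, and then checking that the median-to-mean transfer together with the factor $2$ and the $o(1)$ terms still fit under $n^{-\gamma^2/14}$ for all large $n$, is the delicate part; the geometric Lipschitz and certifiability checks, while indispensable, are routine.
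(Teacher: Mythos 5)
Your proposal is correct and follows essentially the same route as the paper: Talagrand's inequality for the $1$-Lipschitz, certifiable (with $f(b)=b$) variable $L_n$, concentration around the median, the explicit numerical bound $\limsup \E L_n/\sqrt[3]{n}\le \sqrt[3]{2}\,e<3.5$ feeding $4(m+t)<14\,n^{1/3}$, and an $O(\sqrt{m})=O(n^{1/6})$ median-to-mean transfer. The only cosmetic difference is that the paper parametrizes deviations as $\be\sqrt{m\log m}$ and uses a separate lower bound on the median to convert powers of $m$ back into powers of $n$, whereas you work directly with $t=\gamma\sqrt{\log n}\,n^{1/6}$; both correctly locate the slack below $14$ that absorbs the constant factors.
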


For the quantitative version of the limit shape theorem we fix our
triangle $T$ as $T=\conv\{(0,1),(0,0),(1,0)\}$.

\begin{theorem}\label{limsh}
Let $\gamma \geq 1 $ and define $\eps= 3/2 \gamma^{1/2} \, n^{-1/12} (\log
n)^{1/4}$. Then there exists $N>0$, depending on $\gamma$, such
that for every $n>N$,
\[
\PP\big( {\rm dist}(C(Y),\G)>\eps \mbox{ for some }Y \in \cc(X_n)\big)<
2n^{-\gamma^2/14}.
\]
\end{theorem}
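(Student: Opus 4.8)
The plan is to show that any longest chain whose path $C(Y)$ strays from $\G$ by more than $\eps$ would have to be shorter than the typical value of $L_n$ by an amount that, with probability at least $1-2n^{-\gamma^2/14}$, simply cannot occur. First I reduce the Hausdorff statement to a one‑point statement about a cut. Since $C(Y)$ and $\G$ are convex arcs with the common endpoints $p_0,p_2$, the condition $\dist(C(Y),\G)>\eps$ forces the existence of a vertex $w\in Y$ at which a cut of $C(Y)$ witnesses a macroscopic deviation from $\G$ (of order $\eps$), on either the $p_0p_2$ side or the $p_1$ side. The key point is that such a $w$ is itself a sample point of $X_n$, which is what allows me to split the chain cleanly.

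The geometric heart is a \emph{deficit estimate}. Cutting $Y$ at $w$ along the supporting line $\ell$ of $C(Y)$ there, which meets $p_0p_1$ in a point $u$ and $p_1p_2$ in a point $v$, writes $Y$ as a convex chain $Y_1$ from $p_0$ to $w$ inside $T_1=\conv\{p_0,u,w\}$ followed by a convex chain $Y_2$ from $w$ to $p_2$ inside $T_2=\conv\{w,v,p_2\}$. Writing $\rho_i$ for the area ratio $\mathrm{area}(T_i)/\mathrm{area}(T)$, a direct computation---this is where the special role of the parabola enters, namely the self‑similarity that exhibits $\G\cap T_1$ as the special parabola of $T_1$, supplied by Theorem~\ref{mobius}---shows that when $w\in\G$ and $\ell$ is the tangent at the point of $\G$ with affine parameter $t$ one has $\rho_1=t^3$, $\rho_2=(1-t)^3$, so that $\rho_1^{1/3}+\rho_2^{1/3}=1$, and that $1$ is the maximum over all admissible cuts through a given point. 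Because $\G$ is the maximiser, the first variation vanishes and I expect the clean quadratic bound
\[
\rho_1^{1/3}+\rho_2^{1/3}\le 1-c\,\dist(w,\G)^2
\]
with an absolute constant $c>0$, valid for $w$ on either side of $\G$.

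I then convert this area deficit into a length deficit. Since $Y_i$ lives in $T_i$, its length is at most the length $M(T_i)$ of the longest convex chain spanned by the points of $X_n$ in $T_i$, so $L_n=|Y_1|+|Y_2|+1\le M(T_1)+M(T_2)+1$. By affine invariance of the whole set‑up, the longest chain in a triangle of area ratio $\rho$ has the distribution of the full problem with a $\mathrm{Bin}(n,\rho)$ number of points, whence $\E M(T_i)\approx\al(\rho_i n)^{1/3}$, while Theorem~\ref{limit} gives $\E L_n\approx\al n^{1/3}$; the deficit estimate forces the leading terms to satisfy $\al n^{1/3}(\rho_1^{1/3}+\rho_2^{1/3})\le\al n^{1/3}(1-c\,\eps^2)$, i.e. a length gap of order $\al c\,n^{1/3}\eps^2$. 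Feeding in $\eps=\tfrac32\gamma^{1/2}n^{-1/12}(\log n)^{1/4}$ makes this gap of order $\gamma\sqrt{\log n}\,n^{1/6}$, exactly the scale on which Theorem~\ref{conc1} operates. Concentration is then applied twice: once to bound $L_n$ below by $\E L_n-\tfrac12\gamma\sqrt{\log n}\,n^{1/6}$, and once on the upper tail to bound $M(T_1)+M(T_2)$ above; each failure costs $n^{-\gamma^2/14}$, which accounts for the factor $2$ in the final probability. Were a straying longest chain to exist, these two one‑sided bounds would contradict the length gap for $n>N$, proving the theorem; the argument is uniform over $Y\in\cc(X_n)$ because it penalises \emph{any} straying chain.

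The step I expect to be the main obstacle is making the upper bound on $M(T_1)+M(T_2)$ uniform over the cut, since $T_1,T_2$ depend on the random chain through the random point $w$ and the random line $\ell$, whereas a single application of Theorem~\ref{conc1} only controls one prescribed triangle. I would resolve this by monotonicity together with a net: enclose every admissible $T_i$ in one of a deterministic family of triangles whose vertices run over an $n^{-O(1)}$ grid on the sides of $T$, so that $M(T_i)$ is dominated by $M$ of a net triangle whose area ratio exceeds $\rho_i$ by at most $o(\eps^2)$; the sub‑Gaussian tail behind Theorem~\ref{conc1} (the exponent $\gamma^2/14$ amounts to failure probability $e^{-s^2/(14n^{1/3})}$ at deviation $s$) lets the union bound over the polynomially many net triangles be absorbed into the $\sqrt{\log n}$ factor without changing the order of $\eps$. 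A second delicate point, to be handled in tandem, is that the error in Theorem~\ref{limit} is a priori only $o(n^{1/3})$, far larger than the gap $n^{1/6}\sqrt{\log n}$ we wish to detect; this is harmless only because the budget identity $\rho_1^{1/3}+\rho_2^{1/3}=1$ on $\G$ makes the leading $\al n^{1/3}$ terms cancel \emph{exactly} between the lower and upper bounds, leaving just the genuine quadratic deficit and the concentration‑scale fluctuations.
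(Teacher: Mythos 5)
Your overall strategy -- split the chain at a point of maximal deviation, show the two resulting triangles suffer a quadratic deficit in $\sum\rho_i^{1/3}$, convert that into a length deficit of order $\gamma n^{1/6}\sqrt{\log n}$, and kill it with concentration plus a union bound over a discretization -- is the same as the paper's. But two steps you flag as ``delicate'' are in fact genuine gaps, and your proposed fixes for them do not work. The most serious is the conversion of the area deficit into an expectation deficit. You need $\E M(T_1)+\E M(T_2)\le \E L_n-\Theta(\eps^2 n^{1/3})$ with $\eps^2 n^{1/3}\asymp \gamma n^{1/6}\sqrt{\log n}$, but Theorem~\ref{limit} and its subtriangle analogue only give $\E M(T_i)=\al\rho_i^{1/3}n^{1/3}+o(n^{1/3})$ and $\E L_n=\al n^{1/3}+o(n^{1/3})$. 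The three $o(n^{1/3})$ error terms are unrelated unknown functions; your assertion that the leading terms ``cancel exactly'' because $\rho_1^{1/3}+\rho_2^{1/3}=1$ on $\G$ does not make the sub-leading errors cancel, and those errors can be as large as, say, $n^{1/3}/\log\log n$, swamping the deficit you are trying to detect. The paper's Lemma~\ref{l:2chains} avoids this entirely by a superadditivity device: it builds a \emph{third} triangle $S_3$ along $\G$ with $\A(S_3)^{1/3}$ at least the deficit, uses the exact inequality $\E\La^1+\E\La^2+\E\La^3\le\E L_n$ coming from concatenation of chains on the same sample, and then bounds $\E\La^3$ from below by the explicit estimate (\ref{also}), so that no $o(n^{1/3})$ error ever enters. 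Without this (or an equivalent) your argument fails at exactly the step you identified.

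Two further points. First, your quadratic deficit $\rho_1^{1/3}+\rho_2^{1/3}\le 1-c\,\dist(w,\G)^2$ for an \emph{arbitrary} cut point $w$ with an \emph{arbitrary} supporting line is only ``expected,'' and it does not follow from Theorem~\ref{mobi+}, which bounds the deficit by $\tfrac13(a-b)^2$, i.e.\ by the deviation of the \emph{direction} of $\ell$ from tangency to $\G$: taking $w$ on the symmetry axis far below $\G$ with $\ell$ of slope $-1$ gives $a=b$, so that bound is vacuous there and the deficit must instead be extracted from the $c$-dependence in (\ref{abc}). The paper only ever needs the deficit for tangent lines to the scaled parabolas $\G_r$, where $|a-b|=|r|$ cleanly; your more general two-parameter inequality is plausible but requires its own proof. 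Second, your reduction assumes the deviation is witnessed at a \emph{vertex} $w\in Y$; for a chain straying on the $p_0p_2$ side of $\G$ this is false (a single long edge, e.g.\ the chain consisting of $[p_0,p_2]$ alone, is far from $\G$ while having no straying vertices), so you must allow cutting at a non-vertex point of an edge, as the paper in effect does via the common tangent in Lemma~\ref{l:finite+}; this also undercuts your stated reason for wanting $w\in X_n$. The net/union-bound step is quantitatively feasible (the deficit $\approx 9b$ leaves enough room to pay a polynomial union bound), but as written your grid lives ``on the sides of $T$'' while $w$ is an interior point, and after enlarging $T_i$ to a net triangle the sub-chain is no longer anchored at two of its vertices, so you would additionally need a concentration statement for the unanchored functional $M(K,n)$, which the paper supplies only for very small $K$ (Lemma~\ref{l:M>m}).
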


\section{Preliminaries}\label{prelim}

When choosing one random point in triangle $T$, the underlying
probability measure is the normalized Lebesgue measure on $T$. Most
of the random variables treated in this paper (e.g. $L_n$) are
defined on the $n$th power of this probability space, to be denoted
by $T ^ {\otimes n}$. In this case $\PP$ denotes the $n$th power of
the normalized Lebesgue measure on $T$.

Throughout the paper, $\A$ stands for the (Lebesgue) area measure on
the plane. So when choosing $n$ independent random points in $T$,
the number of points in any domain $D \subset T$ is a binomial
random variable of distribution $B(n,\A(D)/\A(T))$. Hence the
expected number of points in $D$ is $n \A(D)/\A(T)$.

For binomial random variables we have the following useful deviation estimates,
which are relatives of Chernoff's inequality, see \cite{as}, Theorems
A.1.12 and A.1.13, pp 267-268. If $K$ has binomial distribution with mean value $k>1$
and $c>0$, then
\begin{equation} \label{hoeffding}
\PP \big(K \leq k -c \sqrt{k \log k}\,\big) \leq k^{-c^2/2}.
\end{equation}
On the other hand, for $c>1$,
\begin{equation} \label{hoeffdingfelso}
\PP \big(K \geq ck \big) \leq \left( \frac{e}{c}\right)^{ck}.
\end{equation}
We will use (\ref{hoeffding}) often, mainly with $c=1$.

\begin{figure}[h]
\epsfxsize =\textwidth \centerline{\epsffile{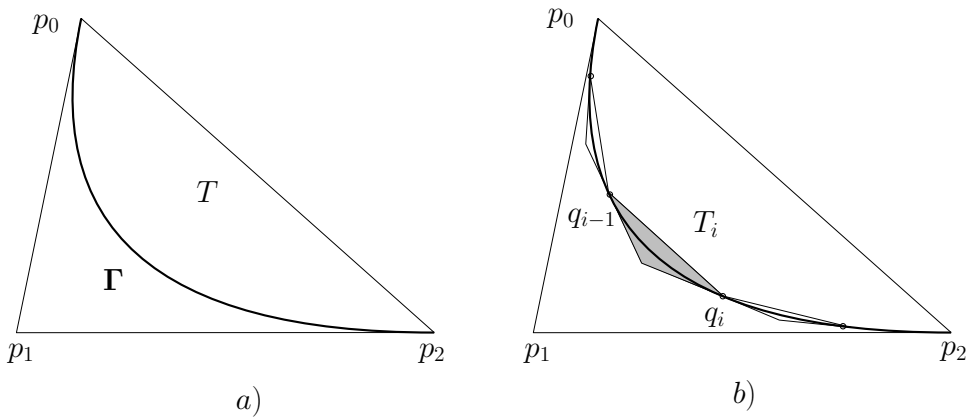}}
  \caption{The special parabola}
  \label{parab}
\end{figure}

The special parabola arc $\G$ in $T$ is characterized by the fact
that it has the largest affine length among all convex curves
connecting $p_0$ and $p_2$ within $T$. (For the definition and
properties of affine arc length see \cite{Bla} or \cite{B99}.) This
is a consequence of the following theorem from \cite{Bla}. Assume
that a line $\ell$ intersects the sides $[p_0,p_1]$ resp.
$[p_1,p_2]$ at points $q_0$ and $q_2$. Let $q_1$ be a point on the
segment $[q_0,q_2]$ and write $T_1$ resp. $T_2$ for the triangle
with vertices $p_0,q_0,q_1$ resp. $q_1,q_2,p_2$, see
Figure~\ref{mobiusfig}.

\begin{theorem}\label{mobius}\cite{Bla} Under the above assumptions
\[
\sqrt[3]{\A(T_1)}+\sqrt[3]{\A(T_2)} \leq \sqrt[3]{\A(T)}.
\]
Equality holds here if and only if $q_1 \in \Gamma$ and $\ell$ is
tangent to $\G$ at $q_1$.
\end{theorem}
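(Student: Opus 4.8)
The plan is to exploit the affine invariance of the statement: any affine map scales $\A(T),\A(T_1),\A(T_2)$ by the same Jacobian, so the inequality and its equality case are unchanged, and I may compute everything as ratios against $\A(T)$ without fixing a particular triangle. First I would introduce three scalar parameters encoding the configuration. Writing $u=p_1-p_0$ and $v=p_2-p_0$, set
\[
q_0=p_0+\sigma u,\qquad q_2=p_1+\tau(p_2-p_1),\qquad q_1=q_0+\lambda(q_2-q_0),
\]
with $\sigma,\tau,\lambda\in[0,1]$; here $\sigma,\tau$ locate the feet of $\ell$ on the two sides and $\lambda$ locates $q_1$ on $[q_0,q_2]$.

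The first computational step is to express the two areas in these parameters using only $2\times2$ determinants and factoring out $\A(T)=\tfrac12|u\times v|$. Since $q_1-q_0=\lambda(q_2-q_0)$, the area of $T_1=\tr p_0q_0q_1$ is linear in $\lambda$ and vanishes at $q_1=q_0$, giving $\A(T_1)=\lambda\,\A(\tr p_0q_0q_2)$; a direct cross product yields $\A(\tr p_0q_0q_2)=\sigma\tau\,\A(T)$, and symmetrically $\A(T_2)=(1-\lambda)(1-\sigma)(1-\tau)\,\A(T)$. I therefore expect the clean identities
\[
\A(T_1)=\lambda\,\sigma\,\tau\;\A(T),\qquad \A(T_2)=(1-\lambda)(1-\sigma)(1-\tau)\;\A(T),
\]
so that the claimed inequality reduces to the purely algebraic statement
\[
(\lambda\sigma\tau)^{1/3}+\big((1-\lambda)(1-\sigma)(1-\tau)\big)^{1/3}\le 1 .
\]

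The second step is to prove this inequality, which is exactly the superadditivity of the geometric mean applied to the triples $(\lambda,\sigma,\tau)$ and $(1-\lambda,1-\sigma,1-\tau)$. Setting $a_i+b_i=1$ for the paired coordinates, it is the case $\prod a_i^{1/3}+\prod b_i^{1/3}\le\prod(a_i+b_i)^{1/3}$ of the Hölder/Mahler inequality, which I would derive in one line from the three–term AM–GM: the two products $\prod\big(a_i/(a_i+b_i)\big)^{1/3}$ and $\prod\big(b_i/(a_i+b_i)\big)^{1/3}$ have sum at most $\tfrac13\sum_i\tfrac{a_i+b_i}{a_i+b_i}=1$.

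The last and subtlest step, which I expect to be the main obstacle, is the equality analysis and its geometric interpretation. Equality in the AM–GM forces $a_i/(a_i+b_i)$ to be independent of $i$, i.e. $\lambda=\sigma=\tau=:s$. It then remains to recognize this configuration on $\G$. I would show that the special parabola admits the quadratic (Bézier) parametrization $B(s)=(1-s)^2p_0+2s(1-s)p_1+s^2p_2$, which traces a parabola arc with $B(0)=p_0$, $B(1)=p_2$ and tangent directions $B'(0)\parallel p_1-p_0$, $B'(1)\parallel p_2-p_1$; hence it is tangent to the two sides at $p_0,p_2$ and, by uniqueness of such a parabola arc, coincides with $\G$. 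The de Casteljau identity $B(s)=(1-s)\big((1-s)p_0+sp_1\big)+s\big((1-s)p_1+sp_2\big)$ shows that $\lambda=\sigma=\tau=s$ is precisely the configuration in which $q_0=(1-s)p_0+sp_1$, $q_2=(1-s)p_1+sp_2$, and $q_1=B(s)\in\G$, while $B'(s)=2(q_2-q_0)$ makes $\ell=q_0q_2$ the tangent of $\G$ at $q_1$. This identifies the equality case with the tangency condition and completes the proof.
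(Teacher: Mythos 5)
Your proposal is correct, and it is worth comparing it to what the paper actually does: the paper does not prove Theorem~\ref{mobius} at all (it is cited from Blaschke), but the same area factorization $\A(T_1)=abc\,\A(T)$, $\A(T_2)=(1-a)(1-b)(1-c)\,\A(T)$ appears verbatim in the paper's proof of the strengthening, Theorem~\ref{mobi+} (your $\sigma,\tau,\lambda$ are the paper's $a,b,c$). From that point the routes diverge. You prove the reduced inequality $\sqrt[3]{abc}+\sqrt[3]{(1-a)(1-b)(1-c)}\le 1$ in one stroke as the superadditivity of the three-term geometric mean (Mahler/H\"older via AM--GM on the normalized triples), whereas the paper first minimizes over $c$ by calculus, substitutes $1-u=(\sqrt{ab}+\sqrt{(1-a)(1-b)})^2$, and applies a two-term AM--GM; the extra work buys the quantitative deficit $\tfrac13(a-b)^2$ needed for Theorem~\ref{mobi+}, which your H\"older argument does not directly yield, but is unnecessary for the qualitative statement at hand. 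Your equality analysis is also a genuine addition: equality in the three-term AM--GM forces $\lambda=\sigma=\tau=s$, and the B\'ezier/de Casteljau identification of $\G$ with $B(s)=(1-s)^2p_0+2s(1-s)p_1+s^2p_2$ (tangent to the sides at $p_0$ and $p_2$, with $B(s)=(1-s)q_0+sq_2$ and $B'(s)=2(q_2-q_0)$) correctly translates that condition into ``$q_1\in\G$ and $\ell$ tangent at $q_1$'' in both directions, something the paper simply outsources to \cite{Bla}. The only point worth making explicit is that equality in the sum requires equality in each of the two AM--GM applications separately (they point the same way), and that the degenerate boundary cases $\sigma\tau\lambda=0$ are consistent with the tangency description at the endpoints of $\G$; both are immediate.
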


The equality part of the theorem implies the following fact. Assume
that $p_0=q_0,q_1,\dots,q_k=p_2$ are points, in this order, on $\G$.
Let $T_i$ be the triangle delimited by the tangents to $\G$ at
$q_{i-1}$ and $q_i$, and by the segment $[q_{i-1},q_i]$,
$i=1,\dots,k$; see Figure~\ref{parab} b).

\begin{corollary}\label{areasum} Under the previous assumptions
$\sum_{i=1}^k\sqrt[3]{\A(T_i)}= \sqrt[3]{\A(T)}$. In particular,
when $\A(T_i)=t$ for each $i=1,\dots,k-1$ and $\A(T_k) < t$, then
$k-1\le \sqrt[3]{\A(T)/t}<k$.
\end{corollary}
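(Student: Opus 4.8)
The plan is to prove the identity $\sum_{i=1}^k\sqrt[3]{\A(T_i)}=\sqrt[3]{\A(T)}$ by induction on $k$, using the equality case of Theorem~\ref{mobius} to split off one triangle at a time, and then to read off the ``in particular'' statement as an elementary consequence. I would phrase the inductive statement for an arbitrary triangle together with its special parabola, so that it can be reapplied to the sub-triangles produced by the splitting. The base case $k=1$ is immediate: here $T_1$ is cut off by the tangents to $\G$ at $q_0=p_0$ and $q_1=p_2$ together with the chord $[p_0,p_2]$; since $\G$ is tangent to $p_0p_1$ at $p_0$ and to $p_1p_2$ at $p_2$, these two tangents are exactly the lines $p_0p_1$ and $p_1p_2$, meeting at $p_1$, so $T_1=T$.

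For the inductive step ($k\ge 2$) I would take $\ell$ to be the tangent to $\G$ at the first interior point $q_1$. Here one should note that $\ell$ meets both sides $[p_0,p_1]$ and $[p_1,p_2]$: as the point of tangency runs from $p_0$ to $p_2$ the tangent line rotates monotonically from $p_0p_1$ to $p_1p_2$, so the tangent at an interior point crosses both sides. Thus Theorem~\ref{mobius} applies with this $\ell$, with the cut point being $q_1$ itself. Since $q_1\in\G$ and $\ell$ is tangent to $\G$ there, the equality case gives $\sqrt[3]{\A(T_1)}+\sqrt[3]{\A(T'_2)}=\sqrt[3]{\A(T)}$, where $T_1$ is the triangle with vertices $p_0$, the apex $a_1=\ell\cap p_0p_1$, and $q_1$ (exactly the corollary's $T_1$), and $T'_2$ is the triangle cut off on the $p_2$-side.

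The crucial point that makes the recursion close is that the sub-arc of $\G$ from $q_1$ to $p_2$ is itself the special parabola of $T'_2$: it is an arc of the same parabola, and it is tangent to the two sides of $T'_2$ at its endpoints $q_1$ and $p_2$, so by the uniqueness asserted in Theorem~\ref{mobius} it must be the special parabola of $T'_2$. The points $q_1,\dots,q_k$ lie on this arc and cut out precisely the triangles $T_2,\dots,T_k$, i.e.\ $k-1$ segments inside $T'_2$. Applying the induction hypothesis to $T'_2$ gives $\sum_{i=2}^k\sqrt[3]{\A(T_i)}=\sqrt[3]{\A(T'_2)}$, and adding $\sqrt[3]{\A(T_1)}$ finishes the step. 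I expect this identification of the sub-arc with the special parabola of the sub-triangle (and the clean matching of the cut-off triangles) to be the only genuinely delicate point; the rest is bookkeeping.

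Finally, the ``in particular'' claim follows at once from the identity. With $\A(T_i)=t$ for $i\le k-1$ and $\A(T_k)<t$, the identity reads $\sqrt[3]{\A(T)}=(k-1)\sqrt[3]{t}+\sqrt[3]{\A(T_k)}$. Since $0\le\A(T_k)<t$, the right-hand side lies in the interval $[(k-1)\sqrt[3]{t},\,k\sqrt[3]{t})$, and dividing through by $\sqrt[3]{t}$ yields $k-1\le\sqrt[3]{\A(T)/t}<k$.
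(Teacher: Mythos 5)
Your proposal is correct and fleshes out exactly the argument the paper intends: the paper offers no written proof, simply asserting that the corollary follows from the equality case of Theorem~\ref{mobius}, and your induction---splitting off $T_1$ via the tangent at $q_1$ and observing that the remaining sub-arc is the special parabola of the residual triangle $T_2'$---is the natural way to make that implication precise. The base case, the identification of the sub-arc, and the elementary derivation of the ``in particular'' clause are all sound.
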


We will need a strengthening of Theorem \ref{mobius}. Assume $q_0$
resp. $q_2$ divides the segment $[p_0,p_1]$ resp. $[p_1,p_2]$ in
ratio $a:(1-a)$ and $b:(1-b)$, see Figure~\ref{mobiusfig}.

\begin{figure}[h]
\epsfxsize =0.55\textwidth \centerline{\epsffile{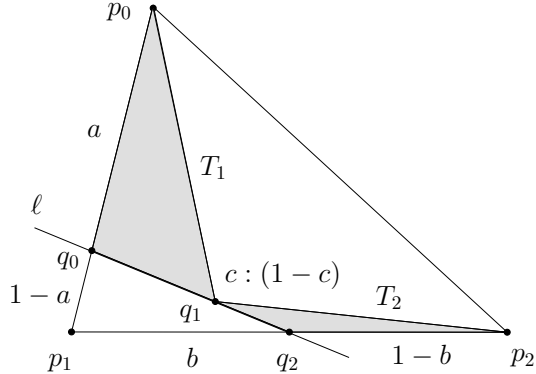}}
  \caption{Characterization of $\Gamma$}
  \label{mobiusfig}
\end{figure}

\begin{theorem}\label{mobi+} With the above notation
\[
 \sqrt[3]{\A(T_1)}+\sqrt[3]{\A(T_2)} \leq \sqrt[3]{\A(T)}-
\sqrt[3]{\A(T)}\frac 13 (a-b)^2.
\]
\end{theorem}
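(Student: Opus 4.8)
The plan is to reduce the statement to a one-variable optimization and then to a single sharp scalar inequality in $a$ and $b$. Every term is a cube root of an area divided by $\sqrt[3]{\A(T)}$, and both the area ratios and the division ratios $a,b$ are preserved under affine maps, so the inequality is affine invariant; I may therefore work directly with the cross products $u=p_1-p_0$, $w=p_2-p_0$. Writing $q_1=(1-t)q_0+tq_2$ with $t\in[0,1]$, a direct expansion of the relevant cross products gives the clean identities
\[
\frac{\A(T_1)}{\A(T)}=abt,\qquad \frac{\A(T_2)}{\A(T)}=(1-a)(1-b)(1-t).
\]

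First I would maximize the left-hand side over the position of $q_1$, that is, over $t$. With $A=ab$ and $B=(1-a)(1-b)$, the function $(At)^{1/3}+(B(1-t))^{1/3}$ is concave in $t$, and setting its derivative to zero yields the maximizer $t^\ast=\sqrt A/(\sqrt A+\sqrt B)\in(0,1)$ with maximal value $(\sqrt A+\sqrt B)^{2/3}$. Hence it suffices to prove, for all admissible $a,b\in[0,1]$, the scalar inequality
\[
\big(\sqrt{ab}+\sqrt{(1-a)(1-b)}\big)^{2/3}\le 1-\tfrac13(a-b)^2 .
\]
This already contains Theorem~\ref{mobius}: the Cauchy--Schwarz bound $\sqrt{ab}+\sqrt{(1-a)(1-b)}\le 1$ recovers the weaker estimate, and the refinement is precisely the defect term.

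Writing $s=\sqrt{ab}+\sqrt{(1-a)(1-b)}$, I would prove the scalar inequality through two sharp estimates. Since $y\mapsto y^{1/3}$ is concave, its tangent line at $y=1$ gives $s^{2/3}=(s^2)^{1/3}\le 1-\tfrac13(1-s^2)$. Next, substituting $m=(a+b)/2$ and $d=(a-b)/2$ and simplifying the surd (using $(m^2-d^2)\big((1-m)^2-d^2\big)=Q^2-d^2$) leads to the exact identity
\[
1-s^2=\frac{2d^2}{Q+\sqrt{Q^2-d^2}},\qquad Q=m(1-m)+d^2 .
\]
The map $Q\mapsto Q+\sqrt{Q^2-d^2}$ is increasing, and $m(1-m)\le\tfrac14$, so $Q\le\tfrac14+d^2$ and the denominator is at most $\tfrac12$; hence $1-s^2\ge 4d^2=(a-b)^2$. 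Combining the two estimates gives $s^{2/3}\le 1-\tfrac13(1-s^2)\le 1-\tfrac13(a-b)^2$, as required.

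The delicate point is that the scalar inequality is tight to second order: both the tangent-line bound and the area estimate become equalities simultaneously as $m\to\tfrac12$ and $d\to0$. Consequently any cruder estimate—for instance bounding $s\le(1+4d^2)^{-1/2}$ and taking the cube root—already fails at fourth order in $d$, since $(1+4d^2)^{-1/3}=1-\tfrac43 d^2+\tfrac{32}{9}d^4+\dots$ exceeds the target. The crux is therefore to extract the \emph{exact} identity for $1-s^2$ together with its monotonicity in $Q$, rather than any one-sided approximation; everything else is routine.
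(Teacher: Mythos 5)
Your proof is correct and follows essentially the same route as the paper's: the same factorization $\A(T_1)=abt\,\A(T)$, $\A(T_2)=(1-a)(1-b)(1-t)\,\A(T)$, the same optimization over the position of $q_1$ yielding the value $\bigl(\sqrt{ab}+\sqrt{(1-a)(1-b)}\bigr)^{2/3}$, and the same tangent-line bound $(1-u)^{1/3}\le 1-u/3$ for the cube root. The only divergence is in proving $1-s^2\ge(a-b)^2$: the paper gets this in one line from the AM--GM inequality applied to $a-a^2$ and $b-b^2$, whereas your rationalization identity $1-s^2=2d^2/\bigl(Q+\sqrt{Q^2-d^2}\bigr)$ with the monotonicity and $m(1-m)\le 1/4$ argument is correct but somewhat longer.
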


\begin{proof}[{\bf Proof}] Let $c$ be a number between $0$ and $1$
so that $q_1$ divides the segment $[q_0,q_2]$ in ratio $c:(1-c)$.
Then, writing $\A(xyz)$ for the area of the triangle with vertices
$x,y,z$,
$$\A(p_0q_0q_1) = a \A(p_0p_1q_1) = ac \A(p_0p_1q_2) = abc \A(p_0p_1p_2),$$
showing $\A(T_1)=abc\A(T)$. Similarly, $\A(T_2) = (1-a)(1-b)(1-c)
\A(T).$ Hence we have to prove the following fact: $0 \leq a,b,c
\leq 1$ implies
\begin{equation} \label{abc}
1 - \sqrt[3]{abc} - \sqrt[3]{(1-a)(1-b)(1-c)} \geq \frac{1}{3}
(a-b)^2.
\end{equation}
Denote $Q$ the left hand side of (\ref{abc}). By computing the
derivative of $Q$ with respect to $c$ yields that for fixed $a$ and
$b$, $Q$ is minimal when
$$c=\frac{\sqrt{ab}}{\sqrt{ab}+\sqrt{(1-a)(1-b)}} \, .$$
It is easy to see that with this $c$
$$\sqrt[3]{abc} + \sqrt[3]{(1-a)(1-b)(1-c)} =
\left(\sqrt{ab}+\sqrt{(1-a)(1-b)}\right)^{2/3}.$$ Now, denote
$\left(\sqrt{ab}+\sqrt{(1-a)(1-b)}\right)^2$ by $1-u$, so
$$ u = a+b-2ab-2\sqrt{ab(1-a)(1-b)}.$$
We claim that $u \geq (a-b)^2$: this is the same as
$$ a-a^2 + b- b^2 \geq 2\sqrt{(a-a^2)(b-b^2)},$$
which is just the inequality between the arithmetic and geometric
means for the numbers $a-a^2,b-b^2\ge 0$. Therefore, using $u \leq
1$,
\[
Q \geq 1-(1-u)^{1/3} \geq \frac {1}{3}\,u \geq \frac{1}{3}\,(a-b)^2.
\qedhere
\]
\end{proof}

Theorems \ref{mobius} and \ref{mobi+} imply the following

\begin{corollary}\label{erinto}
If $q_1 \in \G$ and $\ell$ is tangent to $\G$ at $q_1$, then with the above
notations, $a=b$.
\end{corollary}

It is clear that the underlying triangle $T$ can be chosen
arbitrarily, since an affine transformation does not influence the
value of $L_n$. Our standard model for $T$ is the one with
$p_0=(0,1)$, $p_1=(0,0)$, $p_2=(1,0)$ as the vertices of $T$. In this case the
special parabola $\G$ has equation $\sqrt{x} + \sqrt{y}=1$.

\section{Other models}\label{models}

There are several choices for the underlying finite set $X$. For
instance, consider the lattice $\frac 1t \z$ where $\z$ is the usual
lattice in $\rr$ and $t>0$ is large, and set $X=T \cap \frac 1t \z$.
Clearly, $n:=|X|\approx \A(T)t^2$ as $t \to \infty$. Write
$Y_n\subset X$ for a longest convex chain in $T$. It is shown in
\cite{BP} that, as $t \to \infty$ (or $n\to \infty$),
\begin{equation}\label{tria}
|Y_n|=\frac 6{(2\pi)^{2/3}}
\sqrt[3]{t^2 \A(T)}(1+o(1))=\frac
6{(2\pi)^{2/3}}n^{1/3}(1+o(1)).
\end{equation}

This result is analogous to Theorem~\ref{limit}, except that in the
lattice case  the value of the constant is known to be
$6/{(2\pi)^{2/3}}$, while in the present paper only the existence of
the limit $\al$ is shown, together with $1.5<\al<3.5$, see
Section~\ref{expect}. This is similar to the longest increasing
subsequence problem, \cite{ad}, where it is easy to see that the
expectation is of order $\sqrt{n}$, but proving the precise
asymptotic formula $2\sqrt n(1+o(1))$ turned out to be difficult,
cf. \cite{ls} and \cite{vk}. In our case, numerical experiments
suggest that $\al=3$ and we venture to conjecture that this is the
actual value of $\al$.

More generally, let $K\subset \rr$ be a convex compact set with
nonempty interior, and set $X_t=K \cap \frac 1t \z$. A set $Y
\subset X_t$ is said to be {\sl in convex position} if no point of
$Y$ lies in the convex hull of the others. In other words, the
convex polygon $\conv \; Y$ has exactly $|Y|$ vertices. Let $Y_t$
be a maximum size subset of $X_t$ which is in convex position and
set $m(K,t)=|Y_t|$. It is shown in \cite{BP} that
\begin{equation}\label{prod}
m(K,t)= \frac 3{(2\pi)^{2/3}}\A^*(K)t^{2/3}(1+o(1))
\end{equation}
where $\A^*(K)$ denotes the supremum (actually, maximum) of the
affine perimeter that a convex subset of $K$ can have. The main
difficulty lies in the case of triangles, that is, proving
(\ref{tria}).

These results can be extended, quite easily, to the present case
when $X_n$ is a random sample of $n$ uniform independent points
from $K$. For instance, writing $Y_n$ for the maximum size subset
of $X_n$  in convex position, one can show the following.
\begin{theorem}\label{positio}Under the above conditions
\[ \lim_{n \rightarrow \infty} n^{-1/3} \E|Y_n|= \frac
{\al \A^*(K)}{2\root 3 \of{A(K)}}.
\]
Here $\al$ is the constant from Theorem~\ref{limit}.
\end{theorem}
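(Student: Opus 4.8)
The plan is to reduce the statement to the additivity of equi-affine arc length together with Theorem~\ref{limit}, and I would begin by pinning down the constant. The local computation for the tangent triangle of a convex arc shows that the equi-affine length element equals $2\sqrt[3]{\A(T_i)}$; hence Corollary~\ref{areasum} gives that the special parabola $\G$ of a triangle $T$ has affine length exactly $2\sqrt[3]{\A(T)}$, so that $\A^*(T)=2\sqrt[3]{\A(T)}$. Writing $\rho=n/\A(K)$ for the point intensity, the target formula becomes $\E|Y_n|=\tfrac{\al}{2}\,\A^*(K)\,\rho^{1/3}(1+o(1))$, and for $K=T$ it collapses to $\al n^{1/3}$, i.e.\ to Theorem~\ref{limit}; this sanity check is what fixes the prefactor $\al/2$. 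It therefore suffices to prove matching lower and upper bounds for $\E|Y_n|$ with this constant.

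For the lower bound I would fix a convex curve $C^*\subset K$ whose affine length attains $\A^*(K)$, inscribe points $z_0,\dots,z_m=z_0$ on it, and at each $z_j$ take the tangent line; consecutive tangents meet at a point $w_j$, producing pairwise disjoint ``ear'' triangles $T_j=\triangle z_{j-1}w_jz_j\subset K$ (for fixed $m$ and a fine enough partition these lie inside $K$). Inside each $T_j$ a longest convex chain from $z_{j-1}$ to $z_j$ bulges outward, and since the tangent lines support $C^*$ these chains glue into a single convex polygon, whence $|Y_n|\ge\sum_j(\text{chain in }T_j)$. The $T_j$ being disjoint, the chains are independent; applying Theorem~\ref{limit} in each $T_j$, which receives $\approx\rho\,\A(T_j)$ points, and summing gives $\E|Y_n|\ge\al\rho^{1/3}\sum_j\sqrt[3]{\A(T_j)}\,(1+o(1))$ as $n\to\infty$ with $m$ fixed. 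Letting $m\to\infty$ afterwards drives $\sum_j\sqrt[3]{\A(T_j)}\to\tfrac12\A^*(K)$, which yields the lower bound; note that only linearity of expectation, and not concentration, is needed here.

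For the upper bound I would run this construction in reverse. Given any convex-position set $Y\subset X_n$, I would cut the cyclic vertex sequence of $\conv Y$ into blocks, each a convex chain lying in an ear triangle $T_j$ determined by supporting lines at the block endpoints; replacing each block by the special parabola of its $T_j$ produces a convex curve in $K$ of affine length $2\sum_j\sqrt[3]{\A(T_j)}\le\A^*(K)$. Combining Theorem~\ref{limit} with the concentration bound of Theorem~\ref{conc1}, a chain of length $\ell_j$ inside $T_j$ forces $\sqrt[3]{\A(T_j)}\ge\al^{-1}\ell_j\,\rho^{-1/3}(1-o(1))$; summing, $|Y|=\sum_j\ell_j\le\al\rho^{1/3}\sum_j\sqrt[3]{\A(T_j)}\,(1+o(1))\le\tfrac{\al}{2}\A^*(K)\rho^{1/3}(1+o(1))$. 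A union bound over the at most exponentially many candidate chains, controlled by Theorem~\ref{conc1}, upgrades this to a bound holding simultaneously for all $Y$, hence in expectation.

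The hard part is the upper bound, for two reasons. Geometrically, the ear triangles of an inscribed polygon may protrude beyond $\partial K$, so the clean inequality $2\sum_j\sqrt[3]{\A(T_j)}\le\A^*(K)$ requires a separate argument near the boundary — one can shrink $C^*$ slightly, or invoke Theorem~\ref{mobius} telescopically to dominate the discrete affine-perimeter sum. Probabilistically, the estimate must hold uniformly over all convex-position subsets at once, and balancing the block sizes so that each $T_j$ still contains enough points for the triangle asymptotics while keeping the number of blocks small enough for the union bound of Theorem~\ref{conc1} to survive is the delicate point; it forces the same double limit ($n\to\infty$, then refine) that appears in the lower bound.
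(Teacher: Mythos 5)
The paper does not actually prove Theorem~\ref{positio}: it states the result and defers entirely to \cite{BP} (``The proofs are almost identical to those used in \cite{BP}, so we do not repeat them here''). Measured against that intended strategy, your outline is the right one: the normalization $\A^*(T)=2\sqrt[3]{\A(T)}$ via Corollary~\ref{areasum} is correct and your sanity check does fix the constant; and the lower bound (inscribe a fine tangent-triangle decomposition of a curve nearly attaining $\A^*(K)$, glue longest chains from the ear triangles, use linearity of expectation together with Theorem~\ref{limit}/Lemma~\ref{meansub} in each triangle) is sound and is exactly the mechanism used in the paper's own lower bound for $\E L_n$.

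The upper bound, however, is where the content lies, and as written it has two genuine gaps. First, the inequality $2\sum_j\sqrt[3]{\A(T_j)}\le\A^*(K)$ is simply false for the ear triangles of an arbitrary inscribed convex polygon when those triangles protrude beyond $\partial K$; you flag this but do not fix it, and the fix (intersecting with $K$ and controlling the loss, or the telescoping use of Theorem~\ref{mobius}) is precisely the nontrivial geometric step of \cite{BP}, so it cannot be left as a remark. Second, the sentence ``a union bound over the at most exponentially many candidate chains, controlled by Theorem~\ref{conc1}'' cannot work: Theorem~\ref{conc1} (and Theorem~\ref{concsub}) give only polynomial tail decay $n^{-\gamma^2/14}$ for fixed $\gamma$, and even the raw Talagrand bound $\exp(-cs^2/(m+s))$ with $s\asymp n^{1/3}$ yields only $\exp(-cn^{1/3})$, which loses to the $\exp(Cn^{1/3}\log n)$ count of $k$-subsets with $k\asymp n^{1/3}$. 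The union must instead be taken over a polynomially sized family of geometric configurations (a net of tangent lines/ear triangles, with every convex-position set assigned to one configuration), which is exactly how the paper handles the analogous uniformity issue in Section~\ref{lshape} via the events $G_{i,j}^*$ and Lemmas~\ref{l:finite}, \ref{l:M>m}; small triangles with too few points must then be handled separately by Lemma~\ref{l:M>m} or (\ref{hoeffdingfelso}) rather than by the triangle asymptotics. So: right route, correct constant and lower bound, but the upper bound needs the discretized union bound and the boundary correction actually carried out.
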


One can also prove that $\conv\; Y_n$ has a limit shape, namely,
the unique convex subset of $K$ whose affine perimeter is equal to
$\A^*(K)$. The proofs are almost identical to those used in
\cite{BP}, so we do not repeat them here, instead we rather
explain what is different and more interesting.

Another random model is when $X$ comes from a homogeneous planar
Poisson process $X(n)$ of intensity $n/ \A(T)$. Given a domain $D$
in the plane, $m(D)=|X(n) \cap D|$, the number of points in $D$,
has Poisson distribution with parameter $\lambda=n \A(D)/\A(T)$,
i.e.
$$ \PP\big(m(D)=k\big)= e^{-\lambda} \lambda ^k / k! \;.$$

We can also think of the Poisson model as follows: for a domain $D$,
we first pick a random number $m$ according to the corresponding
Poisson distribution, and then choose $m$ random, independent,
uniform points in $D$. The advantage of the Poisson model is that
the number of points of $X(n)$ in disjoint domains are independent
random variables, unlike in the uniform model.

As is well known, the uniform model $X_n$ and the Poisson model
$X(n)$ behave very similarly. In particular, Theorems \ref{limit},
\ref{conc1}, and \ref{limsh} remain valid for the Poisson model as
well, with essentially the same quantitative estimates. The proofs
are quite standard, and we do not go into the details. Actually,
the proof of Theorem \ref{limsh} is simpler in the Poisson model
since there the subtriangles behave the same way as any other
triangle.

The longest increasing subsequence problem has been almost
completely solved by now, see \cite{ad}. In this respect, our
results only constitute the first, and perhaps the simplest, steps
in understanding the random variable $L_n$.

\section{Expectation}\label{expect}

The main target of this section is to prove of
Theorem~\ref{limit}. We also establish upper and lower bounds for
the constant involved.

\begin{proof}[{\bf Proof} {\rm of Theorem~\ref{limit}}]
We start with an upper bound on $\E L_n$:
\begin{equation}\label{felso}
\limsup_{n \rightarrow \infty} \frac{\E L_n}{\sqrt[3]{n}} \leq
\sqrt[3]{2}e = 3.4248\dots.
\end{equation}
It is shown in \cite{B99}, equation (5.3) (cf. \cite{BRSZ} as well)
that the probability of $k$ uniform independent random points in $T$
forming a convex chain is
\[
\frac{2^k}{k! \,(k+1)!} \;.
\]
Therefore the probability that a convex chain of length $k$ exists
is at most $ {n \choose k} {2^k}/(k! \,(k+1)!)$. In other words
\begin{equation*}\label{upp}
\PP(L_n\ge k) \le {n \choose k} \frac{2^k}{k! \,(k+1)!}\;.
\end{equation*}
We use this estimate and Stirling's formula to bound $\E L_n$.
Assume $\gamma > \sqrt[3]{2}e$. Then
\begin{align*}
\E L_n &= \sum_{k=0}^{n} \PP(L_n>k) \leq \sum_{k=0}^{n} \PP(L_n \ge k)\\
&\leq \gamma \sqrt[3]{n} + \sum_{k > \gamma \sqrt[3]{n}}
\PP(L_n\ge k) \\
&\leq  \gamma \sqrt[3]{n} + \sum_{k > \gamma \sqrt[3]{n}} {n \choose
k} \frac{2^k}{k! \,(k+1)!}\\
&\leq \gamma \sqrt[3]{n} + \sum_{k > \gamma \sqrt[3]{n}} \frac{(2n)^k}{(k!)^3} \\
&\leq \gamma \sqrt[3]{n} + \sum_{k > \gamma \sqrt[3]{n}}
\frac{1}{\sqrt{(2\pi \gamma)^3 n}} \left(\frac{2\,e^3}{\gamma^3}\right)^k\\
&\leq \gamma \sqrt[3]{n} + n^{-1/2} C,
\end{align*}
where $C = \gamma^3 / (\gamma^3 - 2 e^3)$ is a positive constant.
Since this holds for arbitrary $\gamma > \sqrt[3]{2}\,e$,
(\ref{felso}) is proved.

Next we establish a lower bound for $\E L_n$. We use the second half
of Corollary~\ref{areasum} with $t=2\A(T)/n$. So we have triangles
$T_i$ of area $t$ for $1 \leq i \leq k-1$, and the last
\label{intro}triangle $T_k$ of area less than $t$. By
(\ref{areasum}) $ k \geq \sqrt[3]{n/2}$. Let $X_n$ be the uniform
independent sample from $T$. Let $x_i$ be a point of $T_i \cap X_n$,
provided that $T_i \cap X_n \ne \emptyset$. The collection of such
$x_i$'s forms a convex chain. Hence the expected length of the
longest convex chain is at least the expected number of non-empty
triangles $T_i$, so
\begin{align*}
\E L_n \; &\geq \; \sum_1^k \PP\big(T_i\cap X_n \ne \emptyset\big)
\; \geq \;
(k-1)\left(1-\left(1-\frac{2}{n}\right)^n\right) \\
&\ge \left(\sqrt[3]{\frac{n}{2}}-1\right) \, \left(1-e^{-2}\right)
\approx 0.6862 \, n^{1/3}.
\end{align*}

\noindent What we have proved so far is that
\[
\underline{\al} = \liminf_{n \rightarrow \infty}n^{-1/3}\E L_n >
0.6862,\mbox{ and } \overline{\al}= \limsup_{n \rightarrow \infty}
n^{-1/3}\E L_n < 3.4249.
\]
We show next that the limit exists. Suppose on the contrary that
$\underline{\al}< \overline{\al}$.

The idea of the proof is to use the second half of
Corollary~\ref{areasum} again, with the longest convex chain in the
small triangles having length close to the limsup , while in the
large triangle, $\E L_n$ is close to the liminf. For convenience, we suppose that
$A(T)=1.$

Choose a large $n$ with $ \E L_n\geq (1-\eps)\,\overline{\al}
\sqrt[3]{n}$, and an $N$ much larger than $n$ with $ \E L_N\le
(1+\eps)\, \underline{\al} \sqrt[3]{N}$. Here $\eps$ is a suitably
small positive number. Define $n_1$ so that the equation $n = n_1 -
\sqrt{n_1 \log n_1}$ holds.

Choose $N$ uniform, independent random points from triangle $T$.
Define $t = n_1 / N$. Hence the expected number of points in a
triangle (contained in $T$) of area $t$ is $n_1$.

Apply the second half of Corollary \ref{areasum} with this $t$. Then
the number of triangles, $k$, satisfies  $k > \sqrt[3]{N/ n_1}$.

Denote by $k_i$ the number of points in $T_i$, and by $\E L^i$ the
expectation of the length of the longest convex chain in $T_i$.
Clearly $k_i$ has binomial distribution with mean $n_1$, except for
the last triangle where the mean is less than $n_1$.

Since the union of convex chains in the triangles $T_i$ is a convex
chain in $T$ between $(0,0)$ and $(1,1)$, by estimate
(\ref{hoeffding}) we have
\begin{align*}
 \E L_N &\geq \sum_{i \leq k}  \E L^i \; \ge \;
 \sum_{i\leq k-1} \PP (k_i > n)  \E L_{n}\\
&\geq \sum_{i\leq k-1} \left(1- n_1^{-1/2}\right) (1-\eps) \,
\overline{\al}
\sqrt[3]{n}\\
&\geq \left(\sqrt[3]{N/ n_1}-1\right)\left(1- n_1^{-1/2}\right)
(1-\eps) \,\overline{\al} \sqrt[3]{n}\\
&=\overline{\al}\,
\sqrt[3]{N}(1-\eps)\left(1-n_1^{-1/2}\right)\left(\sqrt[3]{n/n_1}-\sqrt[3]{n/N}\right)\\
&\ge \overline{\al} \,\sqrt[3]{N}(1-2\eps),
\end{align*}
where the last inequality holds if $n$ is chosen large enough and
$N$ is chosen even larger with $n/N$ very small. Thus $(1+\eps)\,
\underline{\al} \ge (1-2\eps)\, \overline{\al}$ which, for small
enough $\eps$, contradicts our assumption $\underline{\al} <
\overline{\al}$.
\end{proof}

\noindent{\bf Remark.} The lower bound $\E L_n \ge 0.6862 \,
n^{1/3}$ is probably the easiest to prove. A better estimate, also
mentioned by Enriquez \cite{En}, can be established as follows. Assume $T$ is
the standard triangle and let $D$ denote the domain of $T$ lying
above $\Gamma$. Then $\A(D) = 1/3$, so the expected number of points
in $D$ is $2n/3$, and the number of points is concentrated around
this expectation. The affine perimeter of $D$ is $2 \sqrt[3]{1/2}$
(see \cite{B99}), and a classical result of R\'enyi and Sulanke
\cite{RS} yields that expected number of vertices of $\conv(D \cap
X_n)$ is about
$$ \Gamma\left(\frac{5}{3}\right) \sqrt[3]{\frac{2}{3}}
\left(\frac{1}{3} \right)^{-1/3} 2\sqrt[3]{1/2} \, \sqrt[3]{2n/3}
\approx 1.5772 \,\sqrt[3]{n}$$
Since most vertices are located next
to the parabola, the majority of them form a convex chain, and so
\begin{equation}\label{also}
\liminf_{n \rightarrow \infty} \frac{ \E L_n}{\sqrt[3]{n}} \geq
1.5772\dots.
\end{equation}
This sketch can be completed with standard tools. From now on, we will use this
estimate. Also, $\alpha$  will always refer to the limit constant of
Theorem~\ref{limit}.

\section{Concentration results for $\E L_n$}\label{concentr}

The concentration results proved here are consequences of
Talagrand's inequality from \cite{tal} which says the following.
Suppose $Y$ is a real-valued random variable on a product
probability space $\Omega^{\otimes n}$, and that $Y$ is 1-Lipschitz
with respect to the Hamming distance, meaning that
$$ | Y(x)-Y(y)| \leq 1$$
whenever $x$ and $y$ differ in one coordinates. Moreover assume that
$Y$ is {\sl $f$-certifiable}. This means that there exists a
function $f: \mathbb{N} \rightarrow \mathbb{N}$ with the following
property: for every $x$ and $b$ with $Y(x) \geq b$ there exists an
index set $I$ of at most $f(b)$ elements, such that $Y(y) \geq b$
holds for every $y$ agreeing with $x$ on $I$. Let $m$ denote the
median of $Y$. Then for every $s>0$ we have
\\
$$ \PP (Y \leq m-s)\leq 2 \, \mathrm{exp}\left(\frac{-s^2}{4f(m) } \right)$$
and
$$ \PP (Y \geq m+s)\leq 2 \, \mathrm{exp}\left(\frac{-s^2}{4f(m+s) } \right).$$

\noindent When applied to $L_n$, these inequalities prove
concentration about the median, to be denoted by $m_n$.
Theorem~\ref{conc1} concerns the mean of $L_n$. However, concentration
ensures that the mean and the median are not far apart, in fact,
$\lim n^{-1/3}m_n = \al$. First we need a lower bound on $m_n$.

\begin{lemma}\label{median}  Suppose that $\log n > 25$.
Then $$m_n \ge \sqrt[3]{3n/\log n}.$$
\end{lemma}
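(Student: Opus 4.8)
The plan is to exhibit, with probability strictly larger than $1/2$, a convex chain of length at least $M:=\sqrt[3]{3n/\log n}$; since $\PP(L_n\ge M)>1/2$ forces $\PP(L_n<M)<1/2$, and a median $m_n<M$ would give $\PP(L_n\le m_n)\ge 1/2$ with $\{L_n\le m_n\}\subset\{L_n<M\}$, this yields $m_n\ge M$ at once. Normalizing so that $\A(T)=1$ (which does not affect $L_n$), I would reuse the tangent-line construction underlying the lower bound in Theorem~\ref{limit}: by the second half of Corollary~\ref{areasum}, partition along the special parabola $\G$ into triangles $T_1,\dots,T_k$ with $\A(T_i)=t$ for $i\le k-1$, $\A(T_k)<t$, and $k-1\le\sqrt[3]{1/t}<k$. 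Because these triangles sit between consecutive tangents of $\G$, choosing one point of $X_n$ from each nonempty $T_i$ produces a convex chain, exactly as in that proof; hence $L_n$ is at least the number of nonempty triangles.

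Next I would choose $t$ just below $\log n/(3n)$, so that $\sqrt[3]{1/t}\ge M$ (in fact $\ge M+1$, see below) while the expected number of points in each full triangle is essentially $\tfrac13\log n$. Then each $T_i$ with $i\le k-1$ is empty with probability $(1-t)^n\le e^{-nt}=n^{-1/3+o(1)}$. Writing $W$ for the number of empty triangles among $T_1,\dots,T_{k-1}$, linearity of expectation gives
\[\E W\le (k-1)\,e^{-nt}\le \sqrt[3]{1/t}\cdot n^{-1/3}(1+o(1))=\Big(\tfrac{3}{\log n}\Big)^{1/3}(1+o(1)).\]
For $\log n>25$ the leading constant is $(3/25)^{1/3}\approx 0.493<1/2$, so $\E W<1/2$, and Markov's inequality yields $\PP(W\ge 1)\le\E W<1/2$. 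On the complementary event all of $T_1,\dots,T_{k-1}$ are nonempty, whence $L_n\ge k-1\ge M$; therefore $\PP(L_n\ge M)>1/2$, which is what was needed.

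The delicate point is the simultaneous bookkeeping behind the two inequalities $k-1\ge M$ and $\E W<1/2$: the margin is razor-thin (only $(3/25)^{1/3}$ against $1/2$), so neither the integer rounding in $k-1\le\sqrt[3]{1/t}<k$ nor the gap between $e^{-nt}$ and $n^{-1/3}$ may be squandered. This is precisely what the hypothesis $\log n>25$ buys: it forces $n\ge e^{25}$, hence $M$ astronomically large, so that taking $t$ slightly below $\log n/(3n)$ makes $\sqrt[3]{1/t}\ge M+1$ (and thus $k-1\ge\lceil M\rceil\ge M$) while inflating $\E W$ by only a negligible factor that stays under the $1/2$ threshold. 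I would therefore pin down the exact value of $t$ last, after both competing estimates are on paper, and close the argument by checking the single numerical inequality at $\log n=25$.
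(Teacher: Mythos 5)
Your argument is correct and follows essentially the same route as the paper's proof of Lemma~\ref{mediansub} (of which this lemma is the case $s=1$): the same tangent-line partition from Corollary~\ref{areasum} with $t\approx \log n/(3n)$, the same first-moment/union bound on the empty triangles, and the same numerical threshold $(3/25)^{1/3}<1/2$ supplied by the hypothesis $\log n>25$. The only deviation is that you discard the last (smaller) triangle and compensate by taking $t$ slightly below $\log n/(3n)$, whereas the paper keeps all $k$ triangles; your variant is if anything a bit more careful, since the emptiness estimate for a triangle of area less than $t$ is weaker, not stronger.
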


\noindent Since this is a special case of Lemma \ref{mediansub} from
the next section, the proof will be given there.

\begin{proof}[{\bf Proof} {\rm of Theorem~\ref{conc1}}] The statement cries out for
the application of Talagrand's inequality. The random variable $L_n$
satisfies the conditions with $f(b)=b$, since fixing the coordinates
of a maximal chain guarantees that the length will not decrease, and
changing one coordinate changes the length of the maximal chain by
at most one. Write $m=m_n$ for the median in the present proof.
Setting $s=\be\sqrt{m \log m}$ where $\be$ is an arbitrary positive
constant, we have
\begin{align*}
\PP \big(|L_n-m| \geq \be \sqrt{m \log m}\,\big) &< 4
\exp\left\{\frac
{-\be^2 m\log m}{4(m+\be \sqrt{m\log m})} \right\}\\
&= 4 \exp\left\{\frac{-\be^2\log m}{4(1+\be \sqrt{m^{-1}\log m})}
\right\}
\end{align*}

\noindent Define now $\be_0= c \sqrt{m / \log m}$ with a constant $c>0$, which will
be fixed at the end of the proof in order to give the correct estimate. If $\be
\leq \be_0$, then $\be\sqrt{m^{-1}\log m} \le c$, and the
denominator in the exponent is at most $4(1+c)$. Thus
\begin{equation}\label{betasmall}
\PP \big(|L_n-m| \geq  \be \sqrt{m \log m}\, \big) <
4m^\frac{-\be^2}{4(1+c)}.
\end{equation}
On the other hand, for $\be > \be_0$ we have
\begin{align}\label{betabig}
\PP\big(|L_n-m| \geq  \be \sqrt{m \log m}\, \big) &< \PP
\big(|L_n-m| \geq
\be_0\sqrt{m\log  m}\,\big)\\
\nonumber &= 4\exp \left(-m \, \frac{c^2}{4(1+c)}\right).
\end{align}
Next, we compare the median and the expectation of $L_n$.
\begin{equation*}
| \E L_n - m | \leq   \E |L_n -m |= \int_{0}^{\infty} \;\PP(|L_n-m |
> x) dx.
\end{equation*}
The range of $L_n$ is $[1,n]$, so the integrand is $0$ if $x>n$.
Substitute $x=\be\sqrt{m\log m}$, and divide the integral into two
parts at $\be_0$:
\[
|\E L_n-m| \leq 4\sqrt{m \log m}(I_1+I_2),
\]
where
\begin{equation}\label{I1}
I_1=\int_0^{\be_0}m^{-\be^2/4(1+c)}d\be<\int_0^{\infty}m^{-\be^2/4(1+c)}d\be=
\sqrt{\frac {\pi (1+c)}{\log m}},
\end{equation}
and
\begin{equation}\label{I2}
I_2=\int_{\be_0}^{n/\sqrt{m\log m}} \exp \left(-m \, \frac{c^2}{4(1+c)}\right) d\be
< n \exp \left(-m \, \frac{c^2}{4(1+c)}\right).
\end{equation}
By Lemma \ref{median}, $n < m^4$, so $I_2< m^4\exp (-m \, c^2/4(1+c))$. Since
$m_n$ goes to infinity as $n$ increases (again by Lemma
\ref{median}), the bound on $I_2$ is eventually much smaller than the one on $I_1$:
\begin{align}\label{exp-med}
|\E L_n-m| &\leq 4\sqrt{m\log m}(I_1+I_2) \nonumber \\
&<4\sqrt{\pi (1+c) m}+4\sqrt{m\log m}\,m^4 \exp  \left(-m \,
\frac{c^2}{4(1+c)}\right) \\
&\le 5 \sqrt{\pi (1+c)} \sqrt m \nonumber
\end{align}
for all large enough $n$. Hence $\E L_n$ is of the same order of
magnitude as $m_n$, and we obtain
\begin{equation}\label{medianlim}
\lim n^{-1/3}\E L_n= \lim n^{-1/3} m_n =\al.
\end{equation}
For fixed $\gamma$ and for large enough $n$, (\ref{exp-med}) implies
\begin{align*}
&\PP \big(|L_n -  \E L_n| > \gamma \sqrt{\log n} \;
n^{1/6}\big)\\
&\leq \PP \big(|L_n -  m| > \gamma \sqrt{\log n} \;
n^{1/6}-|\E L_n-m|\, \big)\\
&\leq \PP \big(|L_n -  m| > \gamma \sqrt{\log n} \;
n^{1/6}- 5 \sqrt{\pi (1+c)}\sqrt{m}\, \big).
\end{align*}
Using $m_n \le 3.43 n^{1/3}$ from (\ref{felso}) and
(\ref{medianlim}), it is easy to see that
\begin{align*}
\gamma \sqrt{\log n} \;n^{1/6}- 5 \sqrt{\pi (1+c)\, m} &\ge \g \sqrt{m} \left(
 \sqrt{\frac{3 \log m- \log 41}{3.43}}-\frac { 5 \sqrt{\pi (1+c)}}{\g} \right)\\
&\ge \g \sqrt {\frac {3 }{3.44}} \sqrt {m \log m}.
\end{align*}
Since for large enough $n$, $\g \sqrt {3/3.44} < \beta_0= c
\sqrt{m / \log m}$, (\ref{betasmall}) finally implies
\begin{align*}
&\PP \big(|L_n -  \E L_n| \geq \gamma \sqrt{\log n} \;n^{1/6}\big)\\
&\leq \PP \big(|L_n - m|\ge \g \sqrt {\frac{3} {3.44}}\sqrt{m \log m }\,\big)\\
&\leq 4 m^{- 3 \gamma^2  / 13.76 (1+c) }  \leq n^{-\gamma^2 / 14}
\end{align*}
with (\ref{medianlim}) and the choice of $c=0.01$.
\end{proof}

\noindent{\bf Remark.} The constant in the exponent is far from
being best possible. We have made no attempt to find its optimal
value. In general, Talagrand's inequality is too general to give the
precise concentration, see Talagrand's comments on this in
\cite{tal}.

\section{Subtriangles}\label{subtri}

For the proof of Theorem~\ref{limsh} we need to consider
subtriangles $S$ of $T$, that is, triangles of the form
$S=\conv\;\{a,b,c\}$ with $a,b,c \in T$, while $X_n$ is still a
random sample from $T$. We will need to estimate the concentration
of the longest convex chain from $X_n$ in $S$. Since this random
variable depends only on the relative area of $S$, we may and do
assume that $T$ is the standard triangle and $S=\conv\{(0,\sqrt
{s}),(0,0),(\sqrt {s},0)\}$. Thus $\A(S)=s/2$. Write $L_{s,n}$ for
the length of the longest convex chain in $S$ from $(0,\sqrt {s})$ to
$(\sqrt {s},0)$, and $m_{s,n}$ for its median. In the
following statements, we consider the situation when $sn/2$, the
expected number of points from $X_n$ in $S$, tends to infinity.

As in the proof of Theorem~\ref{conc1}, we need two estimates: a
lower bound for the median guarantees that the mean and the median
are close to each other, while an upper bound for the expectation (or for the
median) is needed for deriving the inequality in terms of $n$. Here
comes the lower bound; the case $s=1$ is Lemma \ref{median}.

\begin{lemma}\label{mediansub} Suppose that $\log (ns) > 25$.
Then $$m_{s,n} \ge \sqrt[3]{3ns/\log (ns)}.$$
\end{lemma}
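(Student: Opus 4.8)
The plan is to prove a lower bound on the median $m_{s,n}$ of the longest convex chain in the subtriangle $S$ by exhibiting, with probability at least $1/2$, a convex chain of the required length. By the affine invariance already noted, it suffices to treat $S$ as a standard triangle whose expected point count is $ns/2$, so I will write $\nu = ns$ for the relevant scale parameter and aim to produce a chain of length roughly $\sqrt[3]{3\nu/\log \nu}$. The construction mirrors the lower-bound argument in the proof of Theorem~\ref{limit}: tile $S$ along the special parabola into subtriangles $T_i$ of equal area using Corollary~\ref{areasum}, pick one random point from each nonempty $T_i$, and observe that these points automatically form a convex chain.

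First I would apply the second half of Corollary~\ref{areasum} inside $S$ with a threshold area $t$ chosen so that the expected number of sample points in each $T_i$ is comfortably larger than $1$; concretely I would set $t$ so that the expected count per triangle is about $\log \nu$, which by Corollary~\ref{areasum} yields $k \approx \sqrt[3]{\A(S)/t}$ triangles, matching the target length up to the logarithmic factor. Each $k_i := |X_n \cap T_i|$ is binomial with mean approximately $\log\nu$ (the last triangle has smaller mean and is discarded). The key point is that as long as \emph{every} one of the first $k-1$ triangles is nonempty, the chosen representatives give a convex chain of length $k-1$, so $L_{s,n} \ge k-1$ on that event.

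The crux is therefore to show that $\PP(\text{all } T_i \text{ nonempty}) \ge 1/2$, which forces the median to be at least $k-1$. Here I would use the lower-tail deviation estimate~(\ref{hoeffding}): with each mean equal to about $\log\nu$, the probability that a given $T_i$ is empty is at most $(\log \nu)^{-c^2/2}$ for a suitable $c$, i.e. polynomially small in $\log\nu$; since there are only $k \approx \sqrt[3]{\nu/\log\nu}$ triangles, a union bound over the triangles must be shown to leave the total failure probability below $1/2$. The hypothesis $\log(ns) > 25$ is exactly what makes this union bound work and pins down the constant $3$ inside the cube root, so I expect the main obstacle to be the bookkeeping: choosing the per-triangle mean (the factor inside $3ns/\log(ns)$) large enough that $k$ union-bounded empty-triangle probabilities sum to less than $1/2$, yet small enough that $k-1$ still exceeds $\sqrt[3]{3ns/\log(ns)}$. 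Balancing these two competing requirements, rather than any conceptual difficulty, is where the calibration of the constant lives.

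Finally, once $\PP(L_{s,n} \ge k-1) \ge 1/2$ is established with $k-1 \ge \sqrt[3]{3ns/\log(ns)}$, the definition of the median gives $m_{s,n} \ge \sqrt[3]{3ns/\log(ns)}$ directly, completing the proof; specializing to $s=1$ recovers Lemma~\ref{median}.
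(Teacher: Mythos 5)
Your overall strategy is exactly the paper's: apply the second half of Corollary~\ref{areasum} inside $S$ with the per-triangle expected point count of order $\log(ns)$, take one sample point from each triangle to obtain a convex chain, and union-bound the probability that some triangle is empty; the calibration you defer to ``bookkeeping'' is resolved just as you anticipate, with the paper choosing the per-triangle mean to be $\log(ns)/3$ so that $k$ just exceeds $\sqrt[3]{3ns/\log(ns)}$, and with the hypothesis $\log(ns)>25$ serving precisely to close the union bound.

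There is, however, a genuine quantitative error in the one step that carries the whole load. You propose to bound $\PP(T_i\cap X_n=\emptyset)$ via (\ref{hoeffding}) and assert it is ``at most $(\log\nu)^{-c^2/2}$ for a suitable $c$, i.e.\ polynomially small in $\log\nu$.'' That is the wrong order of magnitude: the union bound runs over $k\approx\sqrt[3]{ns/\log(ns)}$ triangles, so the emptiness probability must be polynomially small in $ns$ itself; a bound of the form $(\log\nu)^{-C}$ with fixed $C$ gives $k\,(\log\nu)^{-C}\to\infty$ and the argument collapses. Moreover (\ref{hoeffding}) with a fixed $c$ cannot even certify emptiness, since the event $K=0$ requires the deviation $c\sqrt{k\log k}$ to swallow the entire mean $k$, forcing $c$ to grow with $\nu$. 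The correct (and simpler) estimate is the elementary one the paper uses: with per-triangle mean $\log(ns)/3$ one has
\[
\PP(T_i\cap X_n=\emptyset)=(1-p)^n\le e^{-np}=(ns)^{-1/3},
\]
whence $k\,(ns)^{-1/3}\le \sqrt[3]{3/\log(ns)}+(ns)^{-1/3}<1/2$ exactly when $\log(ns)>25$ (the threshold is essentially $24$, since $\sqrt[3]{3/24}=1/2$). With that single replacement your argument goes through and coincides with the paper's proof.
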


\begin{proof}[{\bf Proof}] Set $t=(\A(S) \log (ns))/(3ns)$, and apply the
second half of Corollary~\ref{areasum} to the triangle $S$. The
number of triangles is $k$ with
\[
\sqrt[3]{3ns/\log (ns)}<k \leq \sqrt[3]{3ns/\log (ns)} +1.
\]
For any $i \in \{1,\dots,k$\}, the probability that $T_i$ contains
no point of $X_n$  is
\begin{align*}
\PP(T_i\cap X_n=\emptyset)&\leq\left(1-\frac {\log
(ns)}{3ns}\right)^n
\\ &< \exp\left(\frac{-\log(ns)}{3s}\right)= (ns)^{-1/3s}<
(ns)^{-1/3}.
\end{align*}
Hence the union bound yields
\begin{align*}
\PP\big(L_{n,s} > \sqrt[3]{3ns/\log (ns)}\,\big) &\ge  1-\PP(T_i\cap
X_n =
\emptyset \mbox{ for some }i \le k) \\
&\ge 1-k\,(ns)^{-1/3}\\
&\ge 1- \big( \sqrt[3]{3/\log (ns)} + (ns)^{-1/3} \big),
\end{align*}
which is greater than $1/2$ by the assumption.
\end{proof}

Obtaining an upper bound for the mean is slightly more delicate; note that in the
Lemma below $s$ need not be fixed.
\begin{lemma}\label{meansub} Assume $ns \to \infty$. Then
$$\lim \, (ns)^{-1/3}\E L_{s,n} =\alpha$$
where $\alpha$ is the same constant as in Theorem~\ref{limit}.
\end{lemma}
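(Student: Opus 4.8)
The plan is to reduce $L_{s,n}$ to the full--triangle quantity $L_k$ by conditioning on how many of the sample points fall in $S$, and then to combine the asymptotics of $\E L_k$ from Theorem~\ref{limit} with the concentration of a binomial variable. Let $K=|X_n\cap S|$. Since each of the $n$ points lies in $S$ independently with probability $\A(S)/\A(T)=s$, the variable $K$ is binomial with mean $\mu=ns\to\infty$. Conditionally on $K=k$, the points of $X_n\cap S$ are $k$ independent uniform points in $S$; as $S$ is affinely equivalent to the standard triangle and the chain length is affine invariant, $L_{s,n}$ is then distributed exactly as $L_k$. This gives the identity
\[
\E L_{s,n}=\sum_{k\ge 0}\PP(K=k)\,\E L_k ,
\]
and the aim is to show the right-hand side equals $\alpha\mu^{1/3}(1+o(1))=\alpha(ns)^{1/3}(1+o(1))$. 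The heuristic is transparent: $K$ concentrates at $\mu$ while $\E L_k\sim\alpha k^{1/3}$, so the sum ought to behave like $\alpha\mu^{1/3}$.

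I would first handle the bulk. Fix $\eps,\de>0$. By Theorem~\ref{limit} there is a $k_0$ with $(1-\de)\alpha k^{1/3}\le\E L_k\le(1+\de)\alpha k^{1/3}$ for all $k\ge k_0$; moreover (\ref{felso}) furnishes a single constant $C$ with $\E L_k\le Ck^{1/3}$ for \emph{every} $k\ge1$ (use the asymptotic for large $k$, and $\E L_k\le k\le k_0 k^{1/3}$ for the finitely many small ones, since $k^{1/3}\ge1$). For $\mu$ large the entire main range $(1-\eps)\mu\le k\le(1+\eps)\mu$ lies above $k_0$, so there $\E L_k$ is pinned between $(1-\de)\alpha((1-\eps)\mu)^{1/3}$ and $(1+\de)\alpha((1+\eps)\mu)^{1/3}$. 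Since Chebyshev's inequality gives $\PP\big((1-\eps)\mu\le K\le(1+\eps)\mu\big)\ge 1-2/(\eps^2\mu)\to1$, the main range contributes $\alpha\mu^{1/3}$ up to the factor $(1\pm\eps)^{1/3}(1\pm\de)$, which already yields the lower bound $\liminf\mu^{-1/3}\E L_{s,n}\ge(1-\de)(1-\eps)^{1/3}\alpha$.

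It then remains to show that both tails contribute only $o(\mu^{1/3})$, and this is where I expect the one real difficulty: $n$ may be astronomically larger than $\mu=ns$, so the crude bound $L_K\le K\le n$ is useless and the tail estimates must be made \emph{uniform in $n$}. For the lower tail, using $\E L_k\le Ck^{1/3}$ and Chebyshev,
\[
\sum_{k<(1-\eps)\mu}\PP(K=k)\,\E L_k\le C\big((1-\eps)\mu\big)^{1/3}\PP\big(K<(1-\eps)\mu\big)\le \frac{C\mu^{1/3}}{\eps^2\mu}\to0 .
\]
For the upper tail I would split at $e^2\mu$. On $(1+\eps)\mu<k\le e^2\mu$ the factor $k^{1/3}$ is at most $e^{2/3}\mu^{1/3}$ while Chebyshev bounds the total probability by $(\eps^2\mu)^{-1}$, so this part is $O(\mu^{-2/3})\to0$. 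For $k>e^2\mu$, estimate (\ref{hoeffdingfelso}) with $c=k/\mu>e$ gives $\PP(K\ge k)\le(e\mu/k)^k\le e^{-k}$, so $\sum_{k>e^2\mu}\PP(K=k)\E L_k\le C\sum_{k>e^2\mu}k^{1/3}e^{-k}\to0$ super-exponentially. Crucially, none of these bounds refers to $n$, only to $\mu$.

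Combining the three ranges produces $\limsup\mu^{-1/3}\E L_{s,n}\le(1+\de)(1+\eps)^{1/3}\alpha$, which matches the lower bound obtained above. Letting $\eps,\de\to0$ then gives $\lim\mu^{-1/3}\E L_{s,n}=\alpha$, that is, $\lim(ns)^{-1/3}\E L_{s,n}=\alpha$, as required. The only genuinely delicate point is the $n$-uniform control of the upper tail; everything else is routine once the conditioning identity $\E L_{s,n}=\E L_K$ and the uniform bound $\E L_k\le Ck^{1/3}$ are in hand.
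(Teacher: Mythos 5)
Your proposal is correct, and the overall skeleton matches the paper's: both condition on $K=|X_n\cap S|$, write $\E L_{s,n}=\sum_k\PP(K=k)\E L_k$, and get the lower bound from the asymptotics of Theorem~\ref{limit} applied on an event of probability tending to $1$ (the paper uses the Chernoff-type bound (\ref{hoeffding}) where you use Chebyshev; either suffices). The genuine divergence is in the upper bound. You correctly identify the danger that $n$ can be astronomically larger than $\mu=ns$, and you neutralize it by a three-range split: a uniform bound $\E L_k\le Ck^{1/3}$ (legitimately extracted from (\ref{felso}) plus the trivial bound for small $k$), Chebyshev on the moderate range $(1+\eps)\mu<k\le e^2\mu$, and the Chernoff upper-tail estimate (\ref{hoeffdingfelso}) beyond $e^2\mu$, all of which depend only on $\mu$. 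The paper sidesteps this entire tail analysis with one stroke: since $x\mapsto x^{1/3}$ is concave, Jensen's inequality applied to the conditional distribution of $K$ on $\{K\ge N_0\}$ gives $\sum_{k\ge N_0}\PP(K=k)\,k^{1/3}\le \PP(K\ge N_0)^{2/3}(\E K)^{1/3}\le(ns)^{1/3}$, so no upper-tail estimate and no uniform constant $C$ are needed at all. Your route is more elementary and makes the $n$-uniformity issue explicit; the paper's Jensen trick is shorter and avoids it entirely. Both are valid.
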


\begin{proof}[{\bf Proof}] Take any $\eps >0$ and choose $N_0$ (depending on $\eps$)
so large that for every $k \geq N_0$,
$(1-\eps) \alpha < \E L_k \, k^{-1/3} < (1+\eps) \alpha$. The random variable
$K=|X_n\cap S|$
has binomial distribution with mean $ns$. When $ns$ is large enough, $ns - \sqrt{ns
\log ns } \geq N_0$, and we use (\ref{hoeffding}) for a lower estimate:
\begin{align*}
\E L_{s,n} &= \sum_{k=0}^{n} \PP (K=k) \E L_k\\
&\geq \PP(K> ns - \sqrt{ns \log ns}) (1-\eps)\, \alpha \, (ns - \sqrt{ns \log ns})
^{1/3}\\
&\geq (1- (ns) ^{-1/2})(1-\eps) \, \alpha \, (ns - \sqrt{ns \log ns}) ^{1/3}\\
&\geq (1-2 \eps)\,  \alpha \, (ns)^{1/3}.
\end{align*}
For the upper bound, Jensen's inequality applied to
$\sqrt[3]{x}$ comes in handy:
\begin{align*}
\E L_{s,n} &= \sum_{k=0}^{n} \PP (K=k) \E L_k\\
&\leq N_0 \, \PP(K<N_0)
+ \sum_{k=N_0}^{n} \PP (K=k) \E L_k
\\
&\leq N_0 \, + \sum_{k=N_0}^{n} \PP (K=k) \, (1+\eps) \, \alpha \, \sqrt[3]{k}
\\
&\leq N_0 \, + \PP(K \geq N_0) \, (1+\eps) \, \alpha \,\left(
\sum_{k=N_0}^{n} \frac{\PP (K=k)}{\PP(K \geq N_0)} \ k \right)^{1/3}\\
&\leq N_0 \, + \PP(K \geq N_0)^{2/3} \, (1+\eps) \, \alpha \,(
 \E \, K)^{1/3}\\
&\leq N_0 + (1+\eps) \, \alpha \, (ns)^{1/3}\leq (1+2 \eps) \, \alpha \, (ns)^{1/3}.
\qedhere
\end{align*}
\end{proof}

Next, we derive the strong concentration property of $L_{s,n}$, the
analogue of Theorem~\ref{conc1}.

\begin{theorem}\label{concsub}
Suppose $\tau$ is a constant with $0 \leq \tau < 1$. Then for every $\gamma>0$ there
exists a constant
$N$, such that for every $n>N$ and every $s\geq
n^{-\tau}$,
$$\PP \big(|L_{s,n} -  \E L_{s,n}| > \gamma \sqrt{\log ns} \; (ns)^{1/6}\big) <
(ns)^{-\gamma^2/14}.  $$
\end{theorem}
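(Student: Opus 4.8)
The plan is to mimic the proof of Theorem~\ref{conc1} almost line by line, with the mean point count $ns$ of $K=|X_n\cap S|$ taking over the role that $n$ plays there, and with Lemmas~\ref{mediansub} and~\ref{meansub} replacing their $s=1$ predecessors (Lemma~\ref{median} and the mean estimates). First I would verify that Talagrand's inequality applies to $L_{s,n}$, regarded as a function of the $n$ sample points of $X_n$ in $T$, with certificate function $f(b)=b$: moving one of the $n$ points changes the longest convex chain inside $S$ by at most one (whether that point enters, leaves, or merely shifts within $S$), and any chain of length $b$ in $S$ is certified by the at most $b$ sample points composing it. Writing $m=m_{s,n}$ for the median and taking the deviation $\be\sqrt{m\log m}$, Talagrand's inequality then produces exactly the two-regime bound of (\ref{betasmall}) and (\ref{betabig}), split at $\be_0=c\sqrt{m/\log m}$.

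Next I would compare $m$ with $\E L_{s,n}$ via $|\E L_{s,n}-m|\le\int_0^\infty\PP(|L_{s,n}-m|>x)\,dx$. Since $L_{s,n}\le K\le n$, the integrand vanishes for $x>n$, so the only genuinely new ingredient is a crude control of this cutoff: Lemma~\ref{mediansub} gives $m^3\ge 3ns/\log(ns)$, hence $ns<m^4$ once $ns$ is large, while $s\ge n^{-\tau}$ forces $ns\ge n^{1-\tau}$, i.e. $n\le(ns)^{1/(1-\tau)}<m^{4/(1-\tau)}$. Thus $n$ is bounded by a fixed power of $m$, which is all that is needed to make the tail term $I_2$ negligible against $I_1$ and to recover $|\E L_{s,n}-m|\le 5\sqrt{\pi(1+c)\,m}$ for large $ns$, just as in (\ref{exp-med}).

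For the concluding estimate I would invoke $m_{s,n}\le 3.43\,(ns)^{1/3}$ (from Lemma~\ref{meansub} together with the comparison just obtained) and the lower bound of Lemma~\ref{mediansub} to rewrite $\g\sqrt{\log(ns)}\,(ns)^{1/6}$ as a multiple of $\sqrt{m\log m}$. As in Theorem~\ref{conc1} this yields
$$\g\sqrt{\log(ns)}\,(ns)^{1/6}-5\sqrt{\pi(1+c)\,m}\ge\g\sqrt{\tfrac{3}{3.44}}\sqrt{m\log m}$$
for large $ns$, and since $\g\sqrt{3/3.44}\le\be_0\to\infty$ I may feed this into (\ref{betasmall}) to obtain a bound of the form $4\,m^{-3\g^2/13.76(1+c)}$; with $c=0.01$ and $m\sim\al(ns)^{1/3}$ this is at most $(ns)^{-\g^2/14}$, as desired.

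The one substantively new point, and the reason $\tau<1$ is imposed, is uniformity in $s$: the threshold $N$ must not depend on $s$ in the range $s\ge n^{-\tau}$. This is harmless because every estimate above depends on $s$ only through $ns$, and $s\ge n^{-\tau}$ guarantees $ns\ge n^{1-\tau}\to\infty$; hence every condition ``$ns$ large enough'' is met simultaneously for all admissible $s$ as soon as $n$ exceeds a threshold depending only on $\g$ and $\tau$. The main thing I would check carefully is therefore that Lemmas~\ref{mediansub} and~\ref{meansub} are themselves uniform in $s$ (they are, since their proofs use only that $K$ is binomial with mean $ns$) and that the power bound $n<m^{4/(1-\tau)}$ — the sole place where $\tau$ enters quantitatively — genuinely controls $I_2$. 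I expect this bookkeeping, not any new inequality, to be the only real obstacle.
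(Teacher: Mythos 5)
Your proposal is correct and follows essentially the same route as the paper's proof: Talagrand with $f(b)=b$, the split at $\be_0=c\sqrt{m/\log m}$, the median--mean comparison with the tail term controlled by bounding $n$ by a fixed power of $m$ via $s\ge n^{-\tau}$ (you get $n<m^{4/(1-\tau)}$, the paper uses $m>n^{(1-\tau)/6}$, i.e. $n<m^{6/(1-\tau)}$ --- the same idea), and the final substitution using $m_{s,n}\le 3.43\,(ns)^{1/3}$ from Lemma~\ref{meansub}. Your closing remarks on uniformity in $s$ through the single parameter $ns$ are exactly the point the paper makes implicitly, so nothing is missing.
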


\begin{proof}[{\bf Proof}] This proof is almost identical with that of Theorem
\ref{conc1}. Since $L_{s,n}$ is a random variable on $T ^ {\otimes
n}$, we can apply Talagrand's inequality with the certificate
function $f(b)=b$ in the same way as in the proof of Theorem~\ref{conc1}. Write
again $m$ for $m_{s,n}$, the median of $L_{s,n}$. Define $\be_0=
c \sqrt{m / \log m}$ with $c=0.01$, then the estimates (\ref{betasmall})
and (\ref{betabig}) remain valid with $L_{s,n}$ in place of $L_n$.
Just as before,
\begin{align*}\label{submedianmean}
| \E L_{s,n} - m| &\leq   \E |L_{s,n} - m |=
\int_{0}^{\infty} \;\PP(|L_{s,n}-m |> x) dx \\
&= 4\sqrt{m \log m}(I_1 + I_2)
\end{align*}
where $I_1$ and $I_2$ are defined the same way as in (\ref{I1})
and (\ref{I2}). Moreover, $I_1$ satisfies the inequality
(\ref{I1}). With $I_2$ we have to be a bit more careful.

Note that $s \geq n^{-\tau}$ with $\tau <1$ guarantees that
Lemma~\ref{mediansub} is applicable for $n > \exp(25/(1-\tau))$. As $x/
\log x$ is monotone increasing for $x > e$,
\[
m \ge \sqrt[3]{\frac{3ns}{\log (ns)}} \ge
\sqrt[3]{\frac{3n^{1-\tau}}{(1-\tau) \log n}} >
\sqrt[3]{\frac{n^{1-\tau}}{ n^{(1-\tau)/2}}} = n^{(1-\tau)/6}
\]
for large enough $n$, and therefore by (\ref{I2})
\[
I_2 < m^{6/(1-\tau)} \exp \left(-m \, \frac{c^2}{4(1+c)}\right)
\]
where of course $6/(1-\tau) <  \infty$. Lemma~\ref{mediansub}
implies that $m = m_{s,n} \rightarrow \infty$, thus  the bound on $I_2$ is much
smaller than the one on $I_1$ for large enough $n$. Therefore, just as in
(\ref{exp-med}),
\begin{align*}
|\E L_{s,n}-m| &\leq 4\sqrt{m\log m}(I_1+I_2)\\
&<4\sqrt{\pi (1+c) m}+4\sqrt{m\log m}\,m^{6/(1-\tau)} \exp  \left(-m \,
\frac{c^2}{4(1+c)}\right) \\
&\le 5 \sqrt{\pi (1+c)} \sqrt m.
\end{align*}
Hence $\E L_{s,n}$ is of the same order of magnitude as $m=m_{s,n}$. Since
$sn \geq n^{1-\tau} \rightarrow \infty$, we can use
Lemma~\ref{meansub}, obtaining that for large enough $n$,
\begin{equation}\label{submedfelso}
m_{s,n} \le 3.431 \sqrt[3]{ns}.
\end{equation}
Again for fixed $\gamma$ and for large enough $n$,
\begin{align*}
&\PP (|L_{s,n} -  \E L_{s,n}| > \gamma \sqrt{\log ns} \;
(ns)^{1/6})\\
&\leq \PP (|L_{s,n} -  m| > \gamma \sqrt{\log ns} \;
(ns)^{1/6}-|\E L_{s,n}-m|)\\
&\leq \PP (|L_{s,n} -  m| > \gamma \sqrt{\log ns} \;
(ns)^{1/6}-5 \sqrt{\pi (1+c)} \sqrt m),
\end{align*}
and by (\ref{submedfelso}),
\begin{align*}
\gamma \sqrt{\log ns} \;
(ns)^{1/6}-5 \sqrt{\pi (1+c)} \sqrt m &\ge \g \sqrt {\frac {3 }{3.44}} \sqrt {m \log
m}.
\end{align*}
Since for large enough $n$, $\g \sqrt {3/3.44} < \beta_0=c
\sqrt{m / \log m}$, (\ref{betasmall}) applied to $L_{s,n}$ and
(\ref{submedfelso}) finally implies
\begin{align*}
&\PP \big(|L_{s,n} -  \E L_{s,n}| \geq \gamma \sqrt{\log ns} \;(ns)^{1/6}\big)\\
&\leq \PP \big(|L_{s,n} - m|\ge \g \sqrt {\frac{3} {3.44}}\sqrt{m \log m }\,\big)\\
&\leq 4 m^{- 3 \gamma^2  / 13.76 (1+c) }  \leq (ns)^{-\gamma^2 /
14}.\qedhere
\end{align*}
\end{proof}

\noindent {\bf Remark.} The proof also yields that for any
$0<A<B<\infty$, there exists $N$ (depending on $A$ and $B$ only),
such that the inequality of Theorem~\ref{concsub} holds for any
$\gamma \in [A,B]$ and for every $n>N$.

\section{Geometric lemmas}\label{geometry}

For the proof of Theorem~\ref{limsh} we need further preparations.
We start by assuming that $K$ is a convex compact set in the plane
and $A(K)>0$, and $\widetilde{X}_n$ is a random sample of $n$ uniform and
independent points from $K$. We need to estimate the probability
that $\widetilde{X}_n$ is in convex position, that is, no point of $\widetilde{X}_n$ is
contained in the convex hull of the others. We denote this
probability by $\PP(\widetilde{X}_n \mbox{ convex in }K)$.

\begin{lemma}\label{l:convpos}If $K$ is as above,
\[
\PP(\widetilde{X}_n \mbox{ {\rm convex in} }K) < \left(\frac
{240}{n^2}\right)^n.
\]
\end{lemma}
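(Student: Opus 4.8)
The plan is to reduce the arbitrary convex body $K$ to a parallelogram, for which the convex-position probability is known exactly, and to pay only a controlled price for the reduction. First I would invoke the classical fact that every planar convex body is contained in a parallelogram $P$ with $\A(P)\le 2\,\A(K)$, the extremal case being the triangle. The sharp factor $2$ here is not a convenience but a necessity: a weaker enclosing estimate would push the final constant past $240$.

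The heart of the argument is a comparison step. Generate $n$ independent uniform points in $P$; let $C$ be the event that they are in convex position and $B$ the event that all of them fall in $K$. Conditioned on $B$ the points are uniform and independent in $K$, so $\PP(C\mid B)=\PP(\widetilde X_n\text{ convex in }K)$, while $\PP(B)=(\A(K)/\A(P))^n$. Since $C\cap B\subseteq C$, I would write
\begin{align*}
\PP(\widetilde X_n\text{ convex in }K)
&=\frac{\PP(C\cap B)}{\PP(B)}
\le\frac{\PP(C)}{\PP(B)}\\
&=\left(\frac{\A(P)}{\A(K)}\right)^n\PP(\widetilde X_n\text{ convex in }P)
\le 2^n\,\PP(\widetilde X_n\text{ convex in }P).
\end{align*}

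It then remains to bound the parallelogram probability. Since convex position is affine invariant, $P$ may be replaced by a square, and by Valtr's exact evaluation this probability equals $\big(\binom{2n-2}{n-1}/n!\big)^2$. Using $\binom{2n-2}{n-1}\le 4^n$ together with $n!\ge (n/e)^n$, it is at most $(4e/n)^{2n}=(16e^2/n^2)^n$. Combining this with the comparison inequality gives
$$\PP(\widetilde X_n\text{ convex in }K)\le 2^n\left(\frac{16e^2}{n^2}\right)^n=\left(\frac{32e^2}{n^2}\right)^n<\left(\frac{240}{n^2}\right)^n,$$
because $32e^2=236.4\ldots<240$; the small values of $n$ for which the right-hand side exceeds $1$ are trivial, as the probability never exceeds $1$.

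The step I expect to require the most care is not any single calculation but keeping the numerical constants in line: the chain above only barely clears $240$, so it relies simultaneously on the \emph{sharp} enclosing-parallelogram ratio $2$ and on a genuinely tight (here, exact) bound for the square. Any slack in either ingredient would break the stated constant. If one prefers to avoid citing Valtr's formula, the obstacle becomes supplying a self-contained upper bound $\PP(\widetilde X_n\text{ convex in }P)\le(16e^2/n^2)^n$ for the square of the same quality, which is the only external input the argument really needs.
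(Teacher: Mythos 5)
Your proof is correct and follows essentially the same route as the paper: enclose $K$ in a parallelogram $P$ with $\A(P)\le 2\A(K)$, condition on all points landing in $K$ to compare with Valtr's exact formula for the parallelogram, and finish with a numerical estimate. The paper leaves that last step as ``a straightforward estimate,'' and your explicit computation $2^n(16e^2/n^2)^n=(32e^2/n^2)^n<(240/n^2)^n$ supplies exactly the missing arithmetic.
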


\begin{proof}[\bf{Proof.}] Let $P$ be the smallest area
parallelogram containing $K$. As is well known, $A(P) \le 2A(K)$.
Let $X_n^*$ be a random sample of $n$ uniform and independent
points from $P$. In this case a (surprisingly exact) result of
Valtr~\cite{val} says that $$\PP(X_n^* \mbox{ convex in }P)=
(n!)^{-2}{{2n-2}\choose {n-1}}^2.$$ Now we have
\begin{align*}
\PP(\widetilde{X}_n \mbox{ convex in }K)&=\PP(X_n^* \mbox{ convex in }P\,| \, X_n^*
\subset K)\\
&= \frac {\PP(X_n^* \mbox{ convex in }P\mbox{ and }X_n^*
\subset K)}{\PP(X_n^* \subset K)}\\
&\le  \frac {\PP(X_n^* \mbox{ convex in }P)}{\PP(X_n^* \subset K)}\\
&= (n!)^{-2}{{2n-2}\choose {n-1}}^2\left(\frac
{A(P)}{A(K)}\right)^n < \left(\frac {240}{n^2}\right)^n,
\end{align*}
where the last step is a straightforward estimate.
\end{proof}

>From now on we work exclusively with the standard triangle $T$.

Assume next that $K$ is a convex subset of the triangle $T$, and
let $X_n$ be random sample of $n$ uniform and independent points
from $T$. We define $M(K,n)$ as the random variable
\[
M(K,n)=\max \{|Y|: Y\subset X_n \cap K\mbox{ is in convex
position}\}.
\]
>From Theorem~\ref{positio} it is not hard to determine what the
asymptotic expectation of $M(K,n)$ is. But what we need is that
$M(K,n)$ is large with small probability. This is the content of
the next lemma.

\begin{lemma}\label{l:M>m} Let $K$ be a convex subset of $T$.
Then for any positive integers $n$ and $\mu$
satisfying  $1920\, e^2 \, A(K)\, n\le \mu ^3$,
\[
\PP(M(K,n)\ge \mu)\le \mu ^3 2^{-\mu }+ n2^{-\mu^3/(480e)}.
\]
\end{lemma}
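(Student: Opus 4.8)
The plan is to decompose according to $N=|X_n\cap K|$, the number of sample points that land in $K$. Since $A(T)=1/2$, $N$ is binomial with mean $\E N=2A(K)\,n$, and the hypothesis $1920\,e^2A(K)\,n\le\mu^3$ says precisely that $\E N\le \mu^3/(960e^2)$; that is, $N$ is typically far below the threshold $\mu^3/(480e)$. I would split the event $\{M(K,n)\ge\mu\}$ according to whether $N<\mu^3/(480e)$ (the bulk) or $N\ge\mu^3/(480e)$ (the tail), and bound each contribution by one of the two summands.

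For the bulk, condition on $N=j$ with $\mu\le j\le\mu^3/(480e)$; given this, the $j$ points are uniform and independent in $K$, so Lemma~\ref{l:convpos} applies with parameter $\mu$, and a union bound over the $\binom{j}{\mu}$ candidate $\mu$-subsets gives $\PP(M(K,n)\ge\mu\mid N=j)\le \binom{j}{\mu}(240/\mu^2)^\mu$. Using $\binom{j}{\mu}\le(ej/\mu)^\mu$ this is at most $(240ej/\mu^3)^\mu$, and the constraint $j\le\mu^3/(480e)$ forces the base to be at most $1/2$, so each conditional probability is $\le 2^{-\mu}$. Bounding $\PP(N=j)\le 1$ and summing over the fewer than $\mu^3$ admissible values of $j$ yields the first term $\mu^3\,2^{-\mu}$.

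For the tail I would invoke the upper Chernoff-type estimate (\ref{hoeffdingfelso}). Writing $\mu^3/(480e)=c\,\E N$, the hypothesis gives $c\ge 960e^2/(480e)=2e>1$, hence $(e/c)^{c\E N}\le 2^{-\mu^3/(480e)}$, so that $\PP(N\ge\mu^3/(480e))\le 2^{-\mu^3/(480e)}$. For each relevant $j\ge\mu^3/(480e)$ one then has $\PP(N=j)\le\PP(N\ge\mu^3/(480e))\le 2^{-\mu^3/(480e)}$; bounding the conditional probability trivially by $1$ over the at most $n$ such values of $j$ produces the second term $n\,2^{-\mu^3/(480e)}$. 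Adding the two contributions gives the claimed bound.

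The point requiring care is the small-$\mu$ regime: the estimate of Lemma~\ref{l:convpos} is nontrivial only once $\mu^2>240$, so for small $\mu$ the per-subset bound $(240/\mu^2)^\mu$ exceeds $1$ and the union bound above is vacuous (equivalently, the bulk range $\mu\le j\le\mu^3/(480e)$ is empty unless $\mu^2\gtrsim 480e$). In that case, however, the hypothesis forces $A(K)\,n$, and hence $\E N$, to be so small that $\{N\ge\mu\}$—a necessary condition for $M(K,n)\ge\mu$—already has probability below the asserted bound, so this case is absorbed by the tail estimate. The remaining work is the bookkeeping of the two regimes and the verification that the split constant $480e$ is exactly what makes both the base $240ej/\mu^3$ reach $1/2$ at the threshold and the Chernoff ratio $c$ reach $2e$; these calculations I expect to be routine once the split point is fixed.
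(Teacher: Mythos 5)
Your proposal is correct and follows essentially the same route as the paper: condition on $|X_n\cap K|$, apply Lemma~\ref{l:convpos} with a union bound over $\mu$-subsets to get $(240ek/\mu^3)^\mu$, and split at $k_0=\mu^3/(480e)$, bounding the bulk by $\mu^3 2^{-\mu}$ and the tail by a Chernoff-type estimate giving $n2^{-\mu^3/(480e)}$. The only cosmetic difference is that the paper bounds the tail directly via monotonicity of the binomial probabilities and $\binom{n}{k_0}\le(ne/k_0)^{k_0}$ rather than citing (\ref{hoeffdingfelso}), which is the same computation.
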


\begin{proof}[\bf{Proof.}] If $M(K,n)\ge \mu$, then $K \cap X_n$
contains a subset of size $\mu$ which is in convex position.
Lemma~\ref{l:convpos} and the union bound imply that
\[
\PP\big(M(K,n)\ge  \mu \big|  |K\cap X_n|=k\big) \le {k \choose
\mu }\left(\frac {240}{\mu^2}\right)^{\mu} \le \left(\frac {240\,
e \,k}{\mu ^3}\right)^{\mu }.
\]
The random variable $|K\cap X_n|$ has binomial distribution. Thus
we have
\begin{align*}
&\PP(M(K,n)\ge \mu)\\
&=\sum_{k=\mu}^n\PP \big( M(K,n)\ge \mu \big|  |K\cap X_n|=k
\big){n
\choose k}(2A(K))^k(1-2A(K))^{n-k}\\
&\le \sum_{k=\mu }^n \min \left\{1,\left(\frac
{240\,e\,k}{\mu^3}\right)^{\mu}\right\}{n
\choose k}(2A(K))^k(1-2A(K))^{n-k}\\
&=\sum_{k < k_0}[..]+\sum_{k=k_0}^{n}[..].
\end{align*}
Here we choose $k_0$ to be equal to $\mu^3/(480e)$. Then
\[
\sum_{k < k_0}[..]\le \sum_{k < k_0}\left(\frac {240\,e\,k_0}{\mu
^3}\right)^{\mu} <k_0 2^{-\mu}< \mu ^3 2^{-\mu}.
\]
Since ${n \choose k}(2A(K))^k(1-2A(K))^{n-k}$ is decreasing for $k
>2A(K)n$, and the condition on $\mu$ guarantees that $k_0 >2A(K)n$,
\begin{eqnarray*} \sum_{k>k_0}[..] &\le& n{n \choose
k_0}(2A(K))^{k_0}(1-2A(K))^{n-k_0}\\
&\le& n\left(\frac{ne}{k_0}\right)^{k_0}(2A(K))^{k_0} = n
\left(\frac{2eA(K)n}{k_0}\right)^{k_0}\\
&<&n2^{-k_0}=n2^{-\mu ^3/(480e)}.
\end{eqnarray*}
\end{proof}

For the proof of Theorem~\ref{limsh} we will consider other
parabolas that are similar to $\G$. Let $\G_r$ be the parabola
defined by the equation $\sqrt x + \sqrt y=\sqrt{1+r}$ where the
parameter $r \in (-1,3)$. The graph of $\G_r$ is the homothetic
copy of $\G$ with ratio of homothety $1+r$, and center of
homothety at the origin, see Figure~\ref{parab2}~a). Assume the
point $(a,b)$ is on $\G$. Then the point $((1+r)a,(1+r)b)$ is on
$\G_r$, and the tangent line to this point on $\G_r$ is given by
the equation
\[
\frac x{\sqrt a}+\frac y{\sqrt b}=1+r.
\]
It follows that the distance between parallel tangent lines to
$\G$ and $\G_r$ is
\begin{equation}\label{eq:dist}
\frac {|r|}{\sqrt{\frac 1a+\frac 1b}} \le \frac {|r|}{\sqrt 8}.
\end{equation}
Define now
\[
\rho=\sqrt 8 \eps=3 \sqrt{2} \g^{1/2}n^{-1/12}(\log n)^{1/4},
\]
here $\eps$ comes from Theorem~\ref{limsh}. This definition
immediately implies the following fact.

\begin{prop}\label{pr:dist}If a convex chain $C(Y)$ lies between
$\G_{-\rho}$ and $\G_\rho$, then $\dist(C(Y),\G)\le \eps$.
\end{prop}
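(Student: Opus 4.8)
The plan is to establish the two inclusions defining the Hausdorff distance separately: that every point of $C(Y)$ lies within $\eps$ of $\G$, and that every point of $\G$ lies within $\eps$ of $C(Y)$. Both will follow from one geometric estimate on the width, measured along the normals of $\G$, of the region between $\G_{-\rho}$ and $\G_\rho$. Throughout I write $\phi(x,y)=\sqrt x+\sqrt y$, so $\G=\{\phi=1\}$ and $\G_{\pm\rho}=\{\phi=\sqrt{1\pm\rho}\}$, and for $q=(a,b)\in\G$ I let $n_q$ be the normal to $\G$ at $q$ and $\psi_q(x,y)=x/\sqrt a+y/\sqrt b$, so that the tangent $\ell_q$ to $\G$ at $q$ is $\{\psi_q=1\}$, the parallel tangents to $\G_{\pm\rho}$ are $\{\psi_q=1\pm\rho\}$, and $|\nabla\psi_q|=\sqrt{1/a+1/b}\ge\sqrt8$.

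The core estimate I aim for is that $n_q$ meets $\G_\rho$ at a point $q^+$ and $\G_{-\rho}$ at a point $q^-$ with $|q^+q|\le\eps$ and $|q^-q|\le\eps$; that is, the annular region has normal half-width at most $\eps=\rho/\sqrt8$ on each side of $q$. Granting this, both inclusions are immediate. For the first, any $p\in C(Y)$ lies in the annulus; taking $q$ to be the nearest point of $\G$ to $p$, the segment $pq$ runs along $n_q$ and stays between $\G_{-\rho}$ and $\G_\rho$, so $\dist(p,\G)=|pq|\le\max(|q^-q|,|q^+q|)\le\eps$. For the second, fix $q\in\G$; a short computation shows that the endpoints $(0,1)$ and $(1,0)$ of $C(Y)$ lie on opposite closed sides of $n_q$: writing $u=\sqrt a$, $v=\sqrt b$, the defining functional of $n_q$ takes the value $-u(u^2+v^2+v)\le0$ at $(0,1)$ and $v(u^2+v^2+u)\ge0$ at $(1,0)$. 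Hence the connected polygonal path $C(Y)$ must cross $n_q$ at some $p^*$; as $p^*$ lies in the annulus and on $n_q$, it lies on the segment $[q^-,q^+]$, whence $|p^*q|\le\eps$.

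The outward half of the core estimate is easy and is essentially the content of (\ref{eq:dist}): since $q^+$ lies on the outward normal, $q$ is its nearest point on $\G$, so $|q^+q|=\dist(q^+,\ell_q)=(\psi_q(q^+)-1)/|\nabla\psi_q|$; because the line $\{\psi_q=1+\rho\}$ supports the convex region $\{\phi\le\sqrt{1+\rho}\}$, we have $\G_\rho\subset\{\psi_q\le1+\rho\}$, giving $\psi_q(q^+)\le1+\rho$ and thus $|q^+q|\le\rho/\sqrt8=\eps$.

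The inward half, $|q^-q|\le\eps$, is the main obstacle, and it is genuinely one-sided: the linear functional $\psi_q$ \emph{under}estimates $\phi$ on the inner side, so the supporting-line containment used above is unavailable for $\G_{-\rho}$ and one must work with the curve itself. I would parametrise $n_q$ inward by arclength $t<0$ and set $h(t)=\phi(q+t\widehat n)$ with $\widehat n$ the unit outward normal. Then $h(0)=1$, $h(t^-)=\sqrt{1-\rho}$ with $|t^-|=|q^-q|$, and $h'(0)=\nabla\phi(q)\cdot\widehat n=\tfrac12\sqrt{1/a+1/b}$. Moving inward decreases both coordinates, so both entries of $\nabla\phi=(\tfrac1{2\sqrt x},\tfrac1{2\sqrt y})$ grow and $h'$ increases in $|t|$; hence $1-\sqrt{1-\rho}=\int_{t^-}^0 h'\ge|t^-|\,h'(0)$, i.e. $|q^-q|\le 2(1-\sqrt{1-\rho})/\sqrt{1/a+1/b}$. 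To first order in $\rho$ this is the tangent distance $\rho/\sqrt{1/a+1/b}\le\eps$ of (\ref{eq:dist}); the correction factor $2(1-\sqrt{1-\rho})/\rho\to1$ as $\rho\to0$, and since $\rho=3\sqrt2\,\gamma^{1/2}n^{-1/12}(\log n)^{1/4}\to0$, the bound $|q^-q|\le\eps$ holds for all large $n$, which is all Theorem~\ref{limsh} needs. The point to check carefully is that $[t^-,0]$ stays in the open first quadrant, so $h,h'$ are defined and monotone; this holds because $t^-\to0$, and near the vertices (where one coordinate of $q$ is small) $h'(0)$ is large and the penetration is even smaller.
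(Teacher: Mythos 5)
The paper offers no argument here at all (it declares the proposition ``immediate'' from (\ref{eq:dist})), so your attempt to supply a real proof --- both Hausdorff inclusions via the normal width of the region, plus the intermediate-value observation that $C(Y)$ must cross each normal line $n_q$ --- is a sensible and genuinely more careful architecture. Unfortunately both halves of your core estimate are defective, and in a revealing way: you have the easy and the hard directions exactly interchanged. Your ``easy'' outward bound rests on the claim that $\{\phi\le\sqrt{1+\rho}\}$ is convex and hence $\G_\rho\subset\{\psi_q\le 1+\rho\}$. This is false: $\phi=\sqrt x+\sqrt y$ is \emph{concave}, so it is the superlevel set $\{\phi\ge\sqrt{1+\rho}\}$ that is convex (the chord from $(0,1+\rho)$ to $(1+\rho,0)$ leaves $\{\phi\le\sqrt{1+\rho}\}$), and the tangent line $\{\psi_q=1+\rho\}$ supports that set, giving $\G_\rho\subset\{\psi_q\ge 1+\rho\}$ --- the reverse inclusion. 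Your computation therefore yields $|q^+q|\ge\rho/|\nabla\psi_q|$, a \emph{lower} bound. The upper bound $|q^+q|\le\eps$ does hold, but it is attained with equality at the midpoint $(1/4,1/4)$ of $\G$, so no argument with slack can prove it; one genuinely needs to control the second-order (concavity) correction against the decay of $1/\sqrt{1/a+1/b}$ away from the midpoint.

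On the inward side your monotonicity argument is correct as far as it goes, but it proves only $|q^-q|\le 2(1-\sqrt{1-\rho})/\sqrt{1/a+1/b}$, and the prefactor $2(1-\sqrt{1-\rho})/\rho=2/(1+\sqrt{1-\rho})$ is strictly greater than $1$ for every $\rho>0$; at the midpoint your bound is therefore strictly larger than $\eps$, while the true value there is again exactly $\eps$. ``The correction factor tends to $1$'' cannot rescue the inequality $|q^-q|\le\eps$: a quantity exceeding $\eps$ for every $n$ never becomes $\le\eps$. (It would give $\dist\le(1+O(\rho))\eps$, which suffices for Theorem~\ref{limsh} after adjusting its constant, but it does not prove the proposition as stated.) The irony is that the supporting-line argument you misapply outward works verbatim inward: $\{\psi_q=1-\rho\}$ is tangent to the convex set $\{\phi\ge\sqrt{1-\rho}\}\supset\G_{-\rho}$, so $\psi_q(q^-)\ge 1-\rho$, while $\psi_q$ decreases linearly along the inward normal, whence $|q^-q|=(1-\psi_q(q^-))/|\nabla\psi_q|\le\rho/\sqrt{1/a+1/b}\le\eps$ in one line. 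So the correct division of labour is: inward trivial by supporting lines, outward delicate. Two further loose ends you should tie up: near the endpoints of $\G$ the inward normal ray can leave the first quadrant before meeting $\G_{-\rho}$, so $q^-$ need not exist (harmless, but your segment $[q^-,q^+]$ must be replaced by the portion of $n_q$ inside the region); and the region ``between $\G_{-\rho}$ and $\G_\rho$'' must be intersected with $T$ and uses the paper's extension of $\G_{-\rho}$ by $0$ near $x=1$ --- without the constraint $C(Y)\subset T$ the statement would actually fail near the corner $(1,0)$, where points of $\G_\rho$ itself are at distance $\rho>\eps$ from $\G$.
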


We need one more piece of preparation. Assume $\ell$ is a tangent
to $\G_r$, at the point $q$. With the notations of
Section~\ref{prelim}, let $T_1$ and $T_2$ denote the two triangles
determined by $\ell$ and $q$, see Figure~\ref{parab2} a). Let
$X_n$ be a random sample of $n$ points from $T$ and let $L^i$
denote the length of the longest convex chain in $T_i$, $i=1,2$.

\begin{figure}[h]
\epsfxsize =\textwidth \centerline{\epsffile{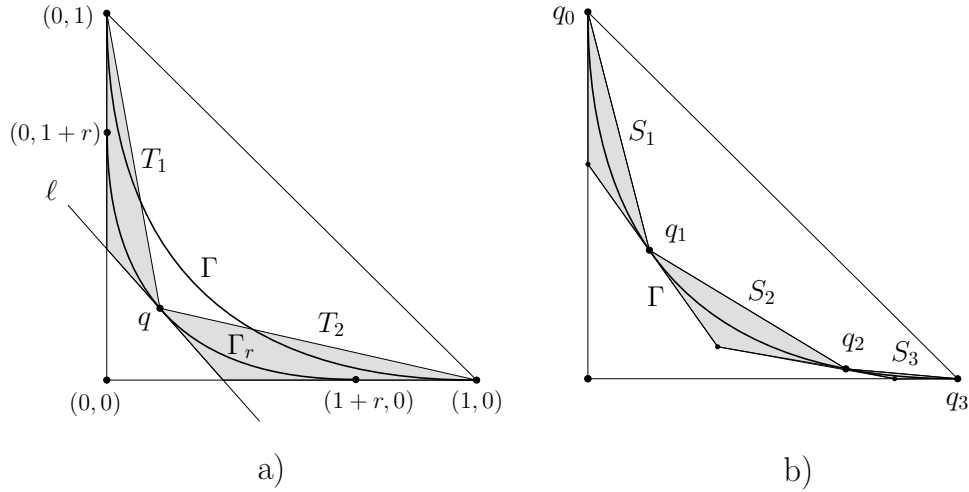}}
  \caption{Convex chains far from $\Gamma$}
  \label{parab2}
\end{figure}

\begin{lemma}\label{l:2chains} For sufficiently large $n$, if $|r|\ge n^{-1/12}$, then
\[
\E L^1+\E L^2 \le \E L_n -0.52 \,r^2\sqrt[3]{n}.
\]
\end{lemma}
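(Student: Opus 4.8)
First I would record the soft inequality behind the statement. Since $\ell$ passes through the three collinear points $q_0,q,q_2$ and both $T_1,T_2$ lie on the same side of $\ell$, a longest convex chain of $T_1$ (from $p_0$ to $q$) can be concatenated with a longest convex chain of $T_2$ (from $q$ to $p_2$) to produce a convex chain of $T$ from $p_0$ to $p_2$; at the junction the turn has the correct orientation precisely because $\ell$ is the common tangent at $q$. Hence $L^1+L^2\le L_n$ pointwise, so $\E L^1+\E L^2\le\E L_n$ for free. The whole content of the lemma is therefore the quantitative gain $0.52\,r^2\sqrt[3]{n}$.

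For the geometry I would compute the division ratios $a,b$ explicitly. Writing the tangent point of $\ell$ on $\G_r$ as $q=((1+r)\sigma^2,(1+r)\tau^2)$ with $\sigma+\tau=1$, the tangent line is $x/\sigma+y/\tau=1+r$. It meets the side $[p_0,p_1]$ (the $y$-axis) at $(0,(1+r)\tau)$ and the side $[p_1,p_2]$ (the $x$-axis) at $((1+r)\sigma,0)$, whence $a=1-(1+r)\tau$ and $b=(1+r)\sigma$. Therefore $a-b=1-(1+r)(\sigma+\tau)=-r$, so $(a-b)^2=r^2$ exactly, independently of the position of $q$ along $\G_r$ (this also recovers Corollary~\ref{erinto} at $r=0$). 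Feeding this into Theorem~\ref{mobi+} gives the clean area bound
\[
\sqrt[3]{\A(T_1)}+\sqrt[3]{\A(T_2)}\le\sqrt[3]{\A(T)}\Big(1-\tfrac13 r^2\Big).
\]

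Next I would convert areas into expected chain lengths. Each $T_i$ is an affine image of the standard triangle containing, in expectation, $2n\A(T_i)$ sample points, so Lemma~\ref{meansub} gives $\E L^i\le(1+\eps)\,\al\,(2n\A(T_i))^{1/3}$ and, applied to $T$ itself, $\E L_n\ge(1-\eps)\,\al\,n^{1/3}$ for any fixed $\eps>0$ once $n$ is large. Multiplying the area bound by $\al(2n)^{1/3}$ and using $(2n)^{1/3}\sqrt[3]{\A(T)}=n^{1/3}$ yields $\E L^1+\E L^2\le(1+\eps)\,\al\,n^{1/3}(1-\tfrac13 r^2)$. Combining this with the lower bound on $\E L_n$, the desired inequality reduces to $2\eps\al\le(\tfrac{\al}{3}-0.52)\,r^2$. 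Here the improved lower bound $\al>1.5772$ from \eqref{also} is exactly what is needed: $\al/3>0.5257>0.52$ leaves a genuine positive slack $\al/3-0.52>0.005$, so the reduction holds as soon as $\eps$ is below a fixed multiple of $r^2$.

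The main obstacle is precisely this last step. To cover the full range $|r|\ge n^{-1/12}$ I must make the error $\eps$ quantitatively small and uniform, not merely $o(1)$: since $r^2\ge n^{-1/6}$, the worst case $|r|\approx n^{-1/12}$ forces $r^2\sqrt[3]{n}$ down to $n^{1/6}$, which is exactly the fluctuation scale appearing in Theorem~\ref{concsub}, and there I would need $\eps$ of order $n^{-1/6}$. I would therefore replace the soft use of Lemma~\ref{meansub} by the explicit mean--median comparison from the proof of Theorem~\ref{concsub} (controlling $|\E L_{s,n}-m_{s,n}|$ by $O((ns)^{1/6})$) together with a quantitative form of the limit, arranging the total error to stay below $(\al/3-0.52)\,r^2\sqrt[3]{n}$ for every admissible $r$; the remark after Theorem~\ref{concsub} giving uniformity over a range of parameters is what makes this feasible. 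I would also treat separately the degenerate cases where $q$ lies so close to $p_0$ or $p_2$ that some $2n\A(T_i)$ does not tend to infinity, but there $\E L^i$ and the corresponding $\sqrt[3]{\A(T_i)}$ term are both negligible, so the estimate is easier. Verifying this uniform quantitative error control is the crux of the argument.
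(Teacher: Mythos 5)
Your geometric computation (showing $(a-b)^2=r^2$, hence $\sqrt[3]{\A(T_1)}+\sqrt[3]{\A(T_2)}\le\sqrt[3]{\A(T)}\,(1-\tfrac13 r^2)$) and the soft inequality $\E L^1+\E L^2\le\E L_n$ agree with the paper. But the way you propose to extract the quantitative gain has a genuine gap, and you have located it yourself: your reduction to $2\eps\al\le(\al/3-0.52)\,r^2$ needs $\eps=O(r^2)=O(n^{-1/6})$, i.e.\ a \emph{rate of convergence} in $\E L_{s,n}=(1+o(1))\,\al\,(ns)^{1/3}$, uniform in $s$. No such rate is available: Lemma~\ref{meansub} is purely qualitative (the $N_0$ there depends on an unquantified $\eps$), and the mean--median comparison inside the proof of Theorem~\ref{concsub} only bounds $|\E L_{s,n}-m_{s,n}|$ by $O(\sqrt{m})$ --- it says nothing about the distance of either quantity from $\al\,(ns)^{1/3}$. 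Second-order asymptotics for $\E L_n$ are a genuinely harder problem (the analogue, for longest increasing subsequences, of the correction term beyond $2\sqrt n$), so this step cannot be patched from what the paper proves; your argument as written does not close.

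The paper avoids the issue with a different device worth noting. Instead of comparing $\E L^1+\E L^2$ and $\E L_n$ through their common asymptotics, it realizes the cube-root area deficit as an actual \emph{third triangle}: four points $q_0,\dots,q_3$ are placed on $\G$ so that the tangent triangles satisfy $\A(S_1)=\A(T_1)$ and $\A(S_2)=\A(T_2)$, whence Corollary~\ref{areasum} and the deficit force $\sqrt[3]{\A(S_3)}\ge\sqrt[3]{1/2}\;r^2/3$. Since the longest chain $\La^i$ in $S_i$ has the same distribution as $L^i$ for $i=1,2$, and the three chains concatenate into one chain of $T$, one gets the exact inequality $\E L^1+\E L^2+\E\La^3\le\E L_n$. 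Now only a crude one-sided bound $\E\La^3\ge 1.569\,N^{1/3}\ge 0.52\,r^2\sqrt[3]{n}$ is needed, which follows from (\ref{hoeffding}) and the R\'enyi--Sulanke estimate (\ref{also}), because $|X_n\cap S_3|$ is binomial with mean at least $(1/3)^3 r^6 n\ge(1/3)^3 n^{1/2}$ under the hypothesis $|r|\ge n^{-1/12}$; the constant $0.52$ works precisely because $1.5772/3>0.52$. No convergence rate to $\al$ enters anywhere. This substitution is the missing idea in your proposal.
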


\begin{proof}[\bf{Proof.}] Let $t_i = 2 \A(T_i)$ for $i=1,2$. We want to apply
Theorem~\ref{mobi+}. It is not hard to see (using
Corollary~\ref{erinto} for instance) that what is denoted by
$|a-b|$ there, is equal to $|r|$ here. Consequently
\begin{equation}\label{terhiany}
 \sqrt[3]{t_1 /2} + \sqrt[3]{t_2 /2} \leq  \sqrt[3]{1/2} -\sqrt[3]{1/2} \; \frac 1 3
\, r^2.
\end{equation}

Write $L^i$ for the longest convex chain in the triangle $T_i$. By
affine invariance $L^i$ has the same distribution as $L_{t_i,n}$
(from Section~\ref{subtri}) for $i=1,2$. We need to estimate $\E
L_n- (\E L^1 + \E L^2)$ from below.

For four points $q_0=(0,1)$, $q_1$, $q_2$ and $q_3=(1,0)$ in this
order on  $\G$, denote by $S_i$ the triangle delimited by the
tangents to $\G$ at $q_{i-1},q_i$, and by the segment
$[q_{i-1},q_i]$, $i=1,2,3$; see Figure~\ref{parab2} b). Choose
$q_1$ and $q_2$ so that $\A(S_1)=t_1/2$ and $\A(S_2)=t_2/2$. Then
Corollary~\ref{areasum} and (\ref{terhiany}) imply that
\begin{equation*}
\sqrt[3]{ \A (S_3)} \geq \sqrt[3]{1/2}\; \frac 1 3 \, r^2.
\end{equation*}
Let now $\La^i$ denote the length of a longest chain in $S_i$ for
$i=1,2,3$. For $i=1$ and $2$, $\La^i$ has the same distribution as
$L_{t_i,n}$ (and as $L^i$). Therefore $\E L^i = \E L_{t_i,n}= \E
\La^i$ for $i=1,2$. Further, $\La^1+\La^2+\La^3 \le L_n$ follows
from concatenating the longest convex chains in the triangles
$S_i$. Thus we have
\begin{equation}\label{osszegkicsi}
\E L^1 + \E L^2 +\E \La^3 = \sum_{i=1}^{3}\E \La^i \leq  \E L_n.
\end{equation}
The random variable $|X_n\cap S_3|$ has binomial distribution with
mean $2A(S_3)n$ which is at least $\kappa = (1/3)^3 r^6 n \ge
(1/3)^3n^{1/2}$. Set $N=\kappa - \sqrt {\kappa \log \kappa}$. Thus
we obtain that for all large enough $n$,
$$N > 0.99 \, \kappa = \frac{0.99}{27} r^6 n,$$
and $N$ tends to infinity with $n$.
Using the estimates (\ref{hoeffding}) and (\ref{also}), again for
large $n$ we have
\begin{align*}
 \E \La^3  &\geq \PP (|X_n\cap S_3| \geq N) \, \E L_N \geq (1-\kappa^{-1/2}) \ 1.57
\, N^{1/3}\\
&\geq 1.569 N^{1/3} \geq 0.52\, r^2 \sqrt[3]n.
\end{align*}
Hence, by (\ref{osszegkicsi})
\begin{equation*}
\E L^1 + \E L^2 \leq \E L_n- 0.52 \,r^2\sqrt[3]n.
\end{equation*}
\end{proof}

\section{Limit shape}\label{lshape}

After the preparations in the previous sections we finally prove
Theorem \ref{limsh}, that is, all chains in $\cc$ lie in a small
neighbourhood of $\G$ with high probability. Note that similar
limit shape results have been proved for convex chains
\cite{BRSZ}; however, they are of different character than the
present case.

We fix the constant $\g\ge 1$. Every result in this chapter holds
for large enough $n$, depending only on $\g$.  We will not always
mention this.

For this proof we set $b=\g n^{1/6}\sqrt{\log n}$. The strong
concentration result of Theorem~\ref{conc1} directly shows that
\[
\PP(L_n <\E L_n -b) \le n^{-\g^2/14}.
\]
We call a convex chain $Y \subset X_n$ {\em long} if its length is
at least $\E L_n - b$.

We will show that all long convex chains lie between the parabolas
$\G_{\rho}$ and $\G_{-\rho}$ with high probability, where high means
$>1-n^{-\g^2/14}$. In view of Proposition~\ref{pr:dist} this
suffices for the proof.

Let $S$ be the triangle with vertices $(0,0.1),(0,0),(0.1,0)$, and
define $H$ to be the event that there is a long convex chain $Y
\subset X_n$ having a point in $S$. We prove first the following
simple fact.

\begin{lemma}\label{l:H}
For $n$ large enough,
$$\PP(H) \le
n^{-\g^2/6}.$$
\end{lemma}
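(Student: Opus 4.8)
The plan is to show that any convex chain that bends into the corner triangle $S$ must be short, because reaching $S$ costs a loss of order $n^{1/3}$ in length --- far more than the slack $b=\g n^{1/6}\sqrt{\log n}$ that a \emph{long} chain is allowed to give up. Suppose $H$ occurs and let $Y$ be a long chain, $|Y|\ge \E L_n-b$, having a vertex $q=(q_x,q_y)\in S$, so that $q_x+q_y\le 0.1$. Splitting $Y$ at $q$ produces a convex chain from $p_0$ to $q$ and one from $q$ to $p_2$. Since $Y$ bulges toward the right-angle corner $p_1=(0,0)$, a support line of $\conv(Y\cup\{p_0,p_2\})$ at $q$ meets the legs $[p_0,p_1]$ and $[p_1,p_2]$, and this shows that the first sub-chain lies in $R_1(q)=\conv\{p_0,p_1,q\}$ and the second in $R_2(q)=\conv\{p_1,p_2,q\}$. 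Writing $L^1,L^2$ for the longest convex chains in $R_1(q),R_2(q)$, with the \emph{fixed} endpoints $\{p_0,q\}$ and $\{q,p_2\}$, we get $|Y|\le L^1+L^2+1$.

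The quantitative heart is a trivial area estimate. The triangle $R_1(q)$ has base $[p_0,p_1]$ of length $1$ and height $q_x$, so its area relative to $T$ is $s_1:=q_x$; likewise $R_2(q)$ has relative area $s_2:=q_y$, and $s_1+s_2=q_x+q_y\le 0.1$. By affine invariance $L^i$ has the law of $L_{s_i,n}$, so Lemma~\ref{meansub} gives $\E L^i=\al(ns_i)^{1/3}(1+o(1))$, and by concavity of $t\mapsto t^{1/3}$,
\[
\E L^1+\E L^2\le \al\, n^{1/3}\,2^{2/3}(s_1+s_2)^{1/3}(1+o(1))\le 0.74\,\al\,n^{1/3}
\]
for large $n$, since $2^{2/3}(0.1)^{1/3}<0.74$. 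As $\E L_n=\al n^{1/3}(1+o(1))$ by Theorem~\ref{limit} and $\al>1.57$ by (\ref{also}), the deficit satisfies $\E L_n-(\E L^1+\E L^2)\ge 0.25\,\al\,n^{1/3}\ge 0.3\,n^{1/3}$, which dwarfs $b$.

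To convert this into a probability I would use a first-moment bound over the at most $n$ candidate vertices:
\[
\PP(H)\le \E\big[\#\{q\in X_n\cap S:\ L^1+L^2+1\ge \E L_n-b\}\big],
\]
which by exchangeability is at most $n$ times the probability, for a \emph{fixed} $q\in S$, that $L^1+L^2\ge \E L_n-b-1$. For fixed $q$ the triangles $R_i(q)$ are fixed, and the event forces $L^i\ge \E L^i+\tfrac12\Delta$ for some $i$, with $\Delta=\E L_n-b-1-(\E L^1+\E L^2)\ge 0.3\,n^{1/3}$. Since $\tfrac12\Delta$ is of the same order $n^{1/3}$ as the median $m_{s_i,n}$ itself, Theorem~\ref{concsub} --- equivalently the large-deviation form (\ref{betabig}) of Talagrand's inequality with an effectively large constant --- bounds this by $\exp(-c\,n^{1/3})$. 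Hence $\PP(H)\le n\exp(-c\,n^{1/3})\le n^{-\g^2/6}$ for all large $n$, with enormous room to spare.

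The one genuine obstacle is the degeneration when $q$ lies extremely close to a corner, say $q_x<n^{-\tau}$, where $R_1(q)$ is too thin for Theorem~\ref{concsub} (which requires relative area $\ge n^{-\tau}$, $\tau<1$) to control $L^1$. I would fix $\tau=3/4$ and treat this regime by a crude count: by the upper binomial tail (\ref{hoeffdingfelso}), $R_1(q)$ contains fewer than $2n^{1-\tau}=2n^{1/4}=o(n^{1/3})$ points except with probability $\exp(-n^{\Omega(1)})$, so $L^1=o(n^{1/3})$, while $L^2$ (relative area $\le 0.1$) is still handled by concentration, and the total again falls short of $\E L_n-b$. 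The symmetric case, and the case where both coordinates are tiny, are only easier, because $s_1^{1/3}+s_2^{1/3}$ subject to $s_1+s_2\le 0.1$ is maximized in the interior. Combining the non-degenerate and degenerate regimes gives the claimed estimate $\PP(H)\le n^{-\g^2/6}$.
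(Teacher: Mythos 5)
Your proof is correct in substance but takes a genuinely different route from the paper's. The paper splits a long chain $Y$ not at its vertex in $S$ but at the point $y$ of $C(Y)$ where the tangent is parallel to the hypotenuse; convexity forces $y\in S$, and the two halves of $Y$ then land in the two \emph{fixed} triangles $S_1=\conv\{(0,1),(0,0),(0.1,0)\}$ and $S_2=\conv\{(0,0.1),(0,0),(1,0)\}$, each of relative area $0.1$. Since $\al(1-2\sqrt[3]{0.1})>1/10$, the deficit $\E L_n-\E L^1-\E L^2$ exceeds $3b$, and a single application of Theorem~\ref{concsub} with deviation $b$ and fixed $\g$ finishes the proof: no union bound over sample points, no $q$-dependent triangles, no degenerate cases. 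You instead split at the random vertex $q\in X_n\cap S$, so your triangles $R_i(q)$ depend on $q$, which forces you to (i) union-bound over the $\le n$ candidate vertices, (ii) make every estimate uniform in $q$, and (iii) treat separately the regime where $q$ is so close to a corner that one triangle has relative area below $n^{-\tau}$. All of this can be carried through: the containments $Y_1\subset R_1(q)$ and $Y_2\subset R_2(q)$ do hold (they follow directly from convexity and monotonicity of the chain; note that the support line at $q$ need not actually meet both legs, so that particular justification is shaky even though the conclusion is fine), the concavity bound $s_1^{1/3}+s_2^{1/3}\le 2^{2/3}(s_1+s_2)^{1/3}$ is correct, and the uniformity in $q$ of the upper bound on $\E L^1+\E L^2$ is supplied by the estimate $\E L_{s,n}\le N_0+(1+\eps)\al(ns)^{1/3}$ inside the proof of Lemma~\ref{meansub}. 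One caveat: Theorem~\ref{concsub} as stated (even with its Remark) only covers deviations $\g\sqrt{\log ns}\,(ns)^{1/6}$ with $\g$ in a fixed compact range, so it does not literally give your $\exp(-cn^{1/3})$ bound for deviations of order $n^{1/3}$; you must fall back on the raw Talagrand inequality, or on (\ref{betabig}) together with the mean--median comparison, as you indicate parenthetically. What your route buys is a far stronger final bound, of order $n\exp(-cn^{\Omega(1)})$ instead of $n^{-\g^2/6}$; what the paper's route buys is brevity and the complete avoidance of the thin-triangle and union-bound complications.
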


\begin{proof}[\bf{Proof.}] Let $Y$ be a long convex chain with a point in $S$,
and let $y$ be a point of $Y$ where the tangent to $C(Y)$ has
slope $1$. Clearly $y \in S$. Let $Y_1$ be the part
of $Y$ between $(0,1)$ and $y$, and $Y_2$ be the part between $y$ and $(1,0)$. Then
$Y_1$
resp. $Y_2$ are convex chains in the triangle $S_1=\conv
\{(0,1),(0,0),(0.1,0)\}$ and $S_2=\conv \{(0,0.1),(0,0),(1,0)\}$.
As $Y$ is a long convex chain,
\[
\E L_n-b\leq |Y|\le |Y_1|+|Y_2|\le L^1+L^2,
\] where $L^i$ denotes the length of the maximal chain in $S_i$
($i=1,2$), $|Y_i|\le L^i$. As $n \to \infty$, the limit of
$n^{-1/3}\E L_n$ resp. $n^{-1/3}\E L^i$ is $\al$ and
$\al\sqrt[3]{0.1}$. This follows from Theorem~\ref{limit} and
Lemma~\ref{meansub}. So $\lim n^{-1/3}(\E L_n-\E L^1-\E L^2)=\al
(1-2\sqrt[3]{0.1})>1/10$, implying that for large enough $n$
\[
\E L_n-\E L^1-\E L^2> \frac 1{10}\sqrt[3]n >3b= 3\g
n^{1/6}\sqrt{\log n}.
\]
So we have
\begin{align*}
&\PP(H)\le \PP(L^1+L^2> \E L_n-b)\\
&=\PP(L^1+L^2>\E L^1+\E L^2+(\E  L_n-\E L^1-\E L^2)-b)\\
&\leq\PP(L^1+L^2>\E L^1+\E L^2+2b)
\leq \sum_{i=1,2}\PP(L^i>\E L^i+b).
\end{align*}
The triangle $S_i$ is of area $1/20$ so Theorem~\ref{concsub}
shows that
\begin{align*}
&\PP(L^i>\E L^i+b)=\PP(L^i>\E L^i+\g n^{1/6}\sqrt{\log
n})\\
&\leq\PP(L^i>\E L^i+\g\,20^{1/6}(n/20)^{1/6}\sqrt{\log
n/20})\\
&\leq\left(\frac n{20}\right)^{-\g^2 \, 20^{1/3}/14} \leq \frac12\, n^{-\g^2 /6}.
\qedhere
\end{align*}
\end{proof}

After this first step, we estimate the probability of the
existence of a long convex chain not lying between $\G_{-\rho}$
and $\G_{\rho}$. First, we deal with the case when the chain goes
below this region.

We define a set of parabolas. Let $\tr=n^{-1/3}\sqrt{\log n}$, $r_i=-\rho-i \tr$, and
\begin{equation}
G_i=\G_{r_i} \mbox{ where } i=-1,0,1,\dots,g.
\end{equation}
Note that $r_i <0 $. Here we define $g$ by the conditions $G_g
\subset S$ but $G_{i-1}$ is not contained in $S$. Thus the case
when a long chain goes below $G_g$ is covered by Lemma~\ref{l:H}.
Clearly $g$ is limited by $-1< r_g=-\rho-g\tr \ge -1+1/10$. Thus
$g \le n^{2/3}$, say.

The convex polygonal chains $C(Y)$ can be considered as functions
defined on $[0,1]$. We extend the definition of $\G_r$ as 0 on the
interval $[1+r,1]$ if $r<0$, and consider this new ``parabola''
$\G_r$ as a function defined on $[0,1]$. A parabola is said to be
{\em below}, resp. {\em above} $C(Y)$ if the corresponding
function is smaller (larger) than the one corresponding to $C(Y)$.

The following lemma is important.

\begin{lemma}\label{l:finite} There are points $q_{i,j}\in
G_{i-1}, \; j=1,2,\dots,J(i)$ with $J(i) \le n^{1/3}$, such that
the upper envelope of the tangent lines $\ell(q_{i,j})$ of
$G_{i-1}$ at $q_{i,j}$ is a broken polygonal path lying above
$G_i$.
\end{lemma}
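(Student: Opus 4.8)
The plan is to regard each parabola arc as the graph of a convex function and to approximate the larger arc $G_{i-1}$ from below by finitely many of its own tangent lines, placing the tangent points $q_{i,j}$ densely enough that the resulting upper envelope never sinks down as far as the smaller arc $G_i$. Writing $G_r$ as the graph of $f_r(x)=(\sqrt{1+r}-\sqrt x)^2$ on $[0,1+r]$ and extending it by $0$ on $[1+r,1]$, one has $f_r''(x)=\tfrac12\sqrt{1+r}\,x^{-3/2}>0$, so $f_r$ is convex and each of its tangent lines lies weakly below the arc. Hence the upper envelope of any family of tangents of $G_{i-1}$ is automatically a convex broken polygonal path lying below $G_{i-1}$ and touching it at the $q_{i,j}$; the whole problem reduces to controlling how far this envelope sags below $G_{i-1}$ between consecutive tangent points, and to making that sag smaller than the vertical gap between $G_{i-1}$ and $G_i$.

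First I would record the two elementary quantities that drive everything. The vertical gap is
\[
d(x)=f_{r_{i-1}}(x)-f_{r_i}(x)=(c_{i-1}-c_i)(c_{i-1}+c_i-2\sqrt x),\qquad c_j=\sqrt{1+r_j},
\]
on $[0,1+r_i]$, where $c_{i-1}-c_i=\tr/(c_{i-1}+c_i)$ is of order $\tr$ since $r_{i-1}-r_i=\tr$ and all the relevant $c_j$ lie in $[\sqrt{0.1},1]$ (every $G_i$ under consideration lies between $S$ and $\G$); this matches the order given by \eqref{eq:dist}. On the remaining interval $[1+r_i,1+r_{i-1}]$ one has $G_i\equiv 0$ while $f_{r_{i-1}}\ge 0$, so it suffices there to make the envelope nonnegative, which is achieved by taking the right endpoint $(1+r_{i-1},0)$, whose tangent is the horizontal line $y=0$, as one of the $q_{i,j}$. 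Second, for a convex $f$ the envelope of the tangents at $x_j<x_{j+1}$ sags below $f$ by at most $\tfrac18\big(\max_{[x_j,x_{j+1}]}f''\big)(x_{j+1}-x_j)^2$, with equality at the crossing of the two tangents in the parabola case. I would then select the $q_{i,j}$ with consecutive $x$-gaps small enough that this sag stays below $\min d$ on each subinterval; by the first computation this keeps the envelope above $G_i$ throughout, which is exactly the assertion.

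It remains to bound $J(i)$. Placing the tangent points at the local density $\sqrt{f_{r_{i-1}}''(x)/(8d(x))}$ makes each sag comparable to $d$, so, up to a fixed constant $C$ enforcing strict inequality,
\[
J(i)\;\le\; 1+C\int_0^{1+r_i}\sqrt{\frac{f_{r_{i-1}}''(x)}{8\,d(x)}}\,dx .
\]
Substituting $u=\sqrt x$ collapses this to a bounded multiple of $\tr^{-1/2}$ times the classical value $\int_0^{c_i}du/\sqrt{u(c_i-u)}=\pi$, so $J(i)=O(\tr^{-1/2})=O\big(n^{1/6}(\log n)^{-1/4}\big)$, which is below $n^{1/3}$ for all large $n$, with enormous room to spare. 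The step I expect to be the \emph{main obstacle} is precisely this count near the two ends of the arc: on the left the curvature $f''$ blows up like $x^{-3/2}$, forcing the tangent points to crowd together, while on the right the gap $d$ degenerates from order $\tr$ to order $\tr^2$. Both effects are genuine, but they are exactly balanced by the convergent arcsine integral above, which is why no separate endpoint surgery is needed beyond the single horizontal tangent at the right endpoint; one could alternatively run the count through the equal-area triangles of Corollary~\ref{areasum}, but the equal-sag density used here keeps the curvature singularity under control far more transparently.
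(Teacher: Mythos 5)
Your proof is correct in substance but follows a genuinely different route from the paper's. The paper never measures how far the envelope sags below $G_{i-1}$: it takes the chord $\la(q)$ that the tangent $\ell_q$ of $G_{i-1}$ cuts out of $G_i$, proves $|\la_q|\ge 4\tr(1+r_i)^2/(1+r_{i-1})^2$, and chains these chords end to end along $G_i$; the envelope then lies above $G_i$ simply because each of its edges contains a chord of the convex curve $G_i$, and the count is (length of $G_i$)/(minimal chord length), which is $O(\tr^{-1})$ and is exactly where the stated bound $n^{1/3}$ comes from. Your tangent-line/sag comparison against $G_{i-1}$ pays for itself with a second-derivative analysis but buys a sharper count $O(\tr^{-1/2})=O(n^{1/6}(\log n)^{-1/4})$ --- consistent with the fact that the paper's chords typically have length of order $\sqrt{\tr}$ rather than $\tr$ (its lower bound is attained only near the ends of the arc). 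Both arguments meet the same two delicate spots, the curvature blow-up at $x=0$ and the collapse of the gap to order $\tr^2$ at $x=1+r_i$, and your arcsine integral handles the balance correctly.

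One step as written does not literally close: on the leftmost interval $[0,x_1]$ the only tool you offer, the sag bound $\frac18\bigl(\max f''\bigr)(x_{j+1}-x_j)^2$, is vacuous because $f_{r_{i-1}}''(x)\sim x^{-3/2}$ is unbounded there (the tangent to $G_{i-1}$ at the left endpoint is vertical), so the claim that ``no separate endpoint surgery is needed'' on the left is not quite right. The fix is one line: the tangent to $G_{i-1}$ at $x_1$ meets the $y$-axis at height $c_{i-1}(c_{i-1}-\sqrt{x_1})$, which is at least $c_i^2=f_{r_i}(0)$ precisely when $x_1\le \tr^2/c_{i-1}^2$; since that line then dominates the convex function $f_{r_i}$ at both ends of $[0,x_1]$, it dominates the chord of $f_{r_i}$ over $[0,x_1]$ and hence $f_{r_i}$ itself on the whole interval. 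Your equal-sag density does place $x_1$ at distance of order $\tr^2$ from $0$, so this costs nothing in the count, but the justification must be supplied (either as above or via the integral form of the Taylor remainder, $\int_x^{x_1}(t-x)f''(t)\,dt\le c_{i-1}\sqrt{x_1}$); with that addition the argument is complete.
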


\begin{proof}[\bf{Proof.}]The line $\ell_q$, which is tangent to $G_{i-1}$ at $q
\in G_{i-1}$, intersects the graph of $G_i$ in two points. Let
$\la_q$ denote the segment connecting these two points. It is not
hard to check that the length of the segment, $|\la(q)|$,
decreases as $q$ moves away from the center point of $G_{i-1}$. A
simple computation reveals that
\begin{equation}\label{eq:laq}
4\tr \frac{(1+r_i)^2}{(1+r_{i-1})^2} \le |\la_q| \le \sqrt{2\tr(1+r_i)},
\end{equation}
where $q$ only moves up to the point when both endpoints of
$\la(q)$ lie in $G_i$.

Now choose $q_{i,1}$ on $G_{i-1}$ so that the lower endpoint of
$\la(q_{i,1})$ is the intersection of $G_i$ with the $x$-axis.
Once $q_{i,j}$ has been defined, we let $q_{i,j+1}$ be the point
in $G_{i-1}$ for which the lower endpoint of $\la(q_{i,j+1})$
coincides with the upper endpoint of $\la(q_{i,j})$, see
Figure~\ref{parab3} a).  The length of $\G_i$ is smaller than
$2(1+r_i)$. So the process of choosing the $q_{i,j}$ stops after
\[
|J(i)| \le \frac {2(1+r_i)(1+r_{i-1})^2}{4\tr(1+r_i)^2} \le \frac
{(1+r_{i-1})^2}{2\tr(1+r_i)}\le n^{1/3}
\]
steps. This finishes the construction of the points $q_{i,j}$.
The upper envelope of the tangent lines $\ell(q_{i,j})$ is a
convex polygonal path that lies between $G_i$ and $G_{i-1}$ with
edges $\la(q_{i,j})$.
\end{proof}

\begin{figure}[h]
\epsfxsize =\textwidth \centerline{\epsffile{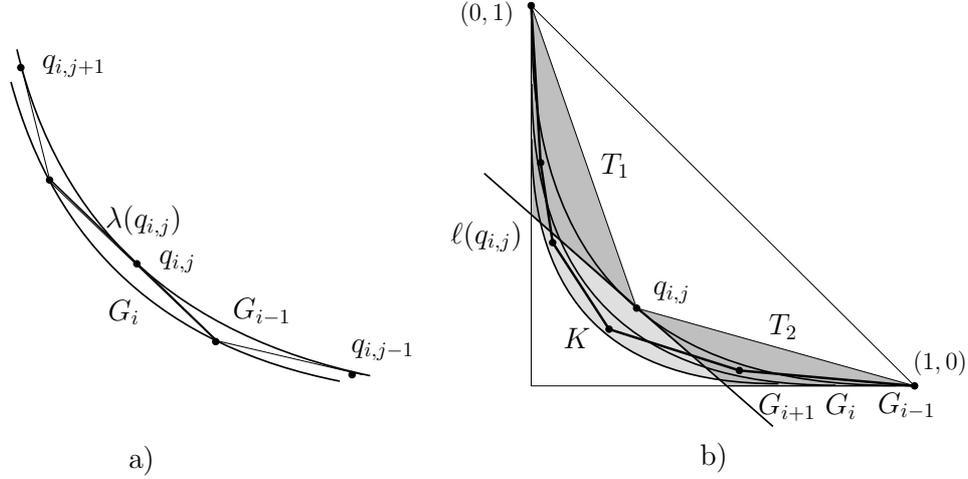}}
  \caption{Long chains below $\Gamma$}
  \label{parab3}
\end{figure}

Now we define $G_i^*$ to be the event that there is a long convex
chain $Y \subset X_n$ with $G_{i+1}$ below $C(Y)$ but $G_i$ not
below $C(Y)$, $i=0,1,\dots,g-1$.

We split these events further. Let $G_{i,j}^*$ be the event that
there is a long convex chain $Y$ with the parabola $G_{i+1}$ below
$C(Y)$ but the line $\ell(q_{i,j})$ not below $C(Y)$; here
$q_{i,j} \in G_{i-1}$ comes from Lemma~\ref{l:finite}. This
implies that $G_i^* \subset \bigcup_{j\in J(i)} G_{i,j}^*$.

\begin{lemma}\label{l:Gij}For every $i=0,\dots,g-1$ and every
$j=1,\dots J(i)$, $\PP(G_{i,j}^*)\le 3 n^{-8\g^2/7}$.
\end{lemma}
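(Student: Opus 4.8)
The plan is to fix the line $\ell=\ell(q_{i,j})$, tangent to $G_{i-1}=\G_{r_{i-1}}$ at $q=q_{i,j}$, and to split any long chain witnessing $G_{i,j}^*$ into three pieces controlled separately. Let $T_1,T_2$ be the two triangles determined by $\ell$ and $q$ as in Section~\ref{prelim} and Lemma~\ref{l:2chains} (so $T_1=\conv\{(0,1),q_0,q\}$, $T_2=\conv\{q,q_2,(1,0)\}$), and let $L^1,L^2$ be the longest convex chains in them. If $Y$ is a long chain with $G_{i+1}$ below $C(Y)$ but $\ell$ not below $C(Y)$, then by convexity $C(Y)$ meets $\ell$ in at most two points, so it decomposes into two arcs weakly above $\ell$ (one near $(0,1)$, one near $(1,0)$) and one arc below $\ell$. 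Using that $\ell$ is tangent to $G_{i-1}$ at $q$, I would check that the two upper arcs lie in $T_1$ and $T_2$, while the lower arc, being below $\ell$ yet above $G_{i+1}$, lies in the convex lens $R=\{(x,y)\in T:\ y\ge G_{i+1}(x)\ \text{and}\ (x,y)\ \text{below}\ \ell\}$. Hence
\[
|Y|\le L^1+L^2+M(R,n).
\]

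Next I would extract the length deficit and bound the lens. As $\ell$ is tangent to $\G_{r_{i-1}}$ with $|r_{i-1}|\ge\rho-\tr\ge n^{-1/12}$ for large $n$, Lemma~\ref{l:2chains} gives $\E L^1+\E L^2\le\E L_n-0.52\,r_{i-1}^2\sqrt[3]n$; since $\rho^2=8\eps^2=18\g\,n^{-1/6}\sqrt{\log n}$ and $b=\g n^{1/6}\sqrt{\log n}$, one computes $0.52\,r_{i-1}^2\sqrt[3]n\ge 9b$, so $\E L^1+\E L^2\le\E L_n-9b$. For the lens, $\ell$ exceeds $G_{i+1}$ only on an interval of length $O(\sqrt\tr)$ around $q$ and there $\ell-G_{i+1}=O(\tr)$ (cf.\ \eqref{eq:dist} and \eqref{eq:laq}), so $\A(R)=O(\tr^{3/2})=O(n^{-1/2}(\log n)^{3/4})$. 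Choosing a threshold $\mu=o(b)$ with $\mu^3\ge 1920e^2\A(R)n$, Lemma~\ref{l:M>m} shows $M(R,n)<\mu$ outside an event of probability $\mu^3 2^{-\mu}+n2^{-\mu^3/(480e)}$, which is superpolynomially small and in particular $\le n^{-8\g^2/7}$.

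Finally I would combine and invoke concentration. On $G_{i,j}^*\cap\{M(R,n)<\mu\}$ the displayed bound and the deficit yield $L^1+L^2\ge|Y|-\mu\ge\E L_n-b-\mu\ge\E L^1+\E L^2+8b-o(b)$. Because $T_1,T_2$ are disjoint subsets of $T$, $t_1+t_2:=2\A(T_1)+2\A(T_2)<2\A(T)=1$, hence $4(t_1^{1/6}+t_2^{1/6})<8$ and the excess forces $L^i>\E L^i+4t_i^{1/6}b$ for at least one $i$. For such $i$, the threshold $4t_i^{1/6}b$ equals $\g'\sqrt{\log(nt_i)}\,(nt_i)^{1/6}$ with $\g'=4\g\sqrt{\log n/\log(nt_i)}$, and since $\g'$ stays in a fixed bounded range once $t_i\ge n^{-\tau}$, the uniform form of Theorem~\ref{concsub} (its Remark) bounds $\PP(L^i>\E L^i+4t_i^{1/6}b)$ by $(nt_i)^{-(\g')^2/14}=n^{-8\g^2/7}$. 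Adding the two triangle contributions to the lens contribution gives $\PP(G_{i,j}^*)\le 3n^{-8\g^2/7}$.

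I expect the geometric decomposition to be the main obstacle: one must verify that the two upper arcs of $C(Y)$ genuinely fall inside $T_1$ and $T_2$ (this is exactly where the tangency of $\ell$ to $G_{i-1}$ at $q$ is used) and that the lower arc is trapped in the thin lens $R$. A secondary technical point is the degenerate case where one triangle, say $T_2$, is so flat that $t_2<n^{-\tau}$, so Theorem~\ref{concsub} is unavailable for it; there I would instead bound $L^2\le M(T_2,n)=o(b)$ by Lemma~\ref{l:M>m}, note that $T_1$ must then carry area $\Theta(1)$ (as $|Y|$ is large), and absorb the whole deficit on $T_1$, keeping the number of exceptional events equal to three.
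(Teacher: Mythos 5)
Your proposal follows essentially the same route as the paper's proof: the same decomposition of a long chain into two convex chains in the tangent triangles $T_1,T_2$ plus a set in convex position in the thin lens between $\ell(q_{i,j})$ and $G_{i+1}$, with the deficit supplied by Lemma~\ref{l:2chains}, the lens controlled by Lemma~\ref{l:M>m}, and the two triangles by the uniform version of Theorem~\ref{concsub} after rewriting the threshold in terms of $(nt_i)^{1/6}\sqrt{\log(nt_i)}$. The only deviations are bookkeeping: the paper takes $\mu=b/5$ and gives each triangle an equal excess $4b$ rather than a $t_i^{1/6}$-weighted one, and it handles the degenerate case $t_i<n^{-5/6}$ by bounding $|T_i\cap X_n|$ directly via (\ref{hoeffdingfelso}) instead of applying Lemma~\ref{l:M>m} to $T_i$ --- both variants work, and the geometric containment you flag as the main point to verify is asserted with the same brevity in the paper itself.
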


Before the proof we state (and prove) the following corollary.

\begin{corollary}\label{cor1}
The probability that there is a long convex chain $Y \subset X_n$
such that $C(Y)$ is not above $\G_{-\rho}$ is at most
$n^{-\g^2/6}+3 n^{-\g^2/7}$.
\end{corollary}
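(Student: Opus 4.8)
The plan is to reduce the statement to a union bound over the events already prepared, organized by a telescoping argument along the nested family of parabolas $G_0=\G_{-\rho},G_1,\dots,G_g$. The starting observation is that since $r_0>r_1>\cdots>r_g$, these arcs are nested as functions on $[0,1]$, so $G_0\ge G_1\ge\cdots\ge G_g$ pointwise. The key consequence is that the property ``$G_i$ is below $C(Y)$'' is monotone in $i$: once $G_i$ lies below $C(Y)$, so does every $G_{i'}$ with $i'\ge i$. This monotonicity is exactly what makes the telescoping exhaustive, so I would establish it first.

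Next I would fix a long convex chain $Y$ with $C(Y)$ not above $\G_{-\rho}=G_0$, i.e.\ with $G_0$ not below $C(Y)$, and split into two cases according to the lowest parabola $G_g$. If $G_g$ is also not below $C(Y)$, then $C(Y)$ dips below $G_g$; since $G_g\subset S$, this is precisely the case ``a long chain goes below $G_g$'', which forces a point of $Y$ in $S$ and is therefore contained in the event $H$ treated in Lemma~\ref{l:H}. If instead $G_g$ is below $C(Y)$, the monotonicity above produces a threshold index $i\in\{0,\dots,g-1\}$ with $G_{i+1}$ below $C(Y)$ but $G_i$ not below $C(Y)$, which is exactly the event $G_i^*$. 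Hence the bad event is contained in $H\cup\bigcup_{i=0}^{g-1}G_i^*$, giving
\[
\PP(\mbox{some long }Y\mbox{ not above }\G_{-\rho})\le \PP(H)+\sum_{i=0}^{g-1}\PP(G_i^*).
\]

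Finally I would assemble the numerical bounds. Lemma~\ref{l:H} gives $\PP(H)\le n^{-\g^2/6}$. For the sum I use $G_i^*\subset\bigcup_{j=1}^{J(i)}G_{i,j}^*$ together with Lemma~\ref{l:Gij}, so $\PP(G_i^*)\le J(i)\cdot 3n^{-8\g^2/7}$; summing over $i$ and inserting $J(i)\le n^{1/3}$ from Lemma~\ref{l:finite} and $g\le n^{2/3}$, the number of events $G_{i,j}^*$ is at most $g\cdot n^{1/3}\le n$, whence $\sum_{i=0}^{g-1}\PP(G_i^*)\le 3n^{1-8\g^2/7}$. The step I expect to be the only real content of the corollary (everything difficult being hidden in Lemma~\ref{l:Gij}) is checking that this residual bound still decays after multiplying the tiny per-event probability by the large count $n$: this is where the hypothesis $\g\ge1$ enters, since $1-8\g^2/7\le -\g^2/7$ is equivalent to $\g^2\ge1$. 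With $\g\ge1$ one gets $3n^{1-8\g^2/7}\le 3n^{-\g^2/7}$, and adding the two contributions yields the asserted bound $n^{-\g^2/6}+3n^{-\g^2/7}$.
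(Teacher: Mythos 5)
Your proposal is correct and follows essentially the same route as the paper: decompose the bad event into $H$ (the chain dipping below $G_g\subset S$, handled by Lemma~\ref{l:H}) together with the events $G_i^*\subset\bigcup_j G_{i,j}^*$, then apply the union bound with $gJ(i)\le n$ and use $\g\ge 1$ to absorb the factor $n$ into the exponent. The paper compresses this into one sentence; your write-up merely makes the monotonicity of ``$G_i$ below $C(Y)$'' and the arithmetic $1-8\g^2/7\le-\g^2/7$ explicit.
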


This is quite easy: If there is such a chain, then either $H$, or
some $G_i^*$ ($i=0,1,\dots,g-1$) occur. Since $G_i^* \subset
\bigcup _{j\in J(i)}G_{i,j}^*$, $g J(i) \leq n$ and $\g \geq 1$,
the corollary follows from Lemmas~\ref{l:Gij} and \ref{l:H}.

\begin{proof}[{\bf Proof} {\rm of Lemma~\ref{l:Gij}.}] Let $T_1,T_2$
be the two triangles determined by $q_{i,j}$ and $\ell(q_{i,j})$
as usual, and let $K=K_{i,j}$ be the convex set between
$\la(q_{i,j})$ and $G_{i+1}$, see Figure~\ref{parab3} b).

We estimate $A(K)$ as follows. A simple calculation as in
(\ref{eq:laq}) yields that the diameter of $K$ is at most $2
\sqrt{\tr}$, and $K$ is between the line $\ell(q_{i,j})$ and the
parallel line tangent to $\G_{i+1}$. The distance of these lines
is at most $2 \tr/\sqrt 8$ as one can easily check using
(\ref{eq:dist}). Then $A(K)\le \sqrt{2} \tr^{3/2}$.

A long convex chain $Y\subset X_n$ which is above $G_{i+1}$ but
not above $\ell(q_{i,j})$ splits into 3 parts: $Y_1=T_1\cap Y$,
$Y_2=T_2\cap Y$, and $Y_3=K\cap Y$. Here $Y_1,Y_2$ are convex
chains in $T_1$ (from $(0,1)$ to $q_{i,j}$) and in $T_2$ (from
$q_{i,j}$ to $(1,0)$), and $Y_3$ is in convex position in $K$. So
with the notations of the previous section we have
\[
|Y_1| \le L^1,\; |Y_2| \le L^2, \mbox{ and } |Y_3| \le M(K,n).
\]
Since $Y$ is a long convex chain, $|Y_1|+|Y_2|+|Y_3| \ge \E L_n-b$.
This implies that $L^1+L^2+M(K,n) \ge \E L_n-b$. We are going to
show that this event has small probability.

We apply Lemma~\ref{l:M>m} with $\mu=b/5$. For
large enough $n$ it implies that
\begin{equation}\label{eq:last}
\PP(M(K,n)\ge b/5)<(b/5)^3 2^{-b/5}+n2^{- b^3/(480e5^3)}<
2^{-n^{1/6}}< n^{-8\gamma^2 /7},
\end{equation}
since the condition
$1920\, e^2\,A(K)n \le (b/5)^3$ is satisfied as $A(K) \le
 \sqrt{2} \tr^{3/2}< \sqrt{2} n^{-1/2}(\log n)^{3/4}$ and $(b/5)^3=\g^3 n^{1/2}
(\log n)^{3/2}/125$.

Next,
\begin{align}\label{eq:start}
& \PP(L^1+L^2+M(K,n) \ge \E L_n -b)\nonumber \\
&\le \PP(L^1+L^2 \ge \E L_n -1.2 \, b)+\PP(M(K,n) \ge b/5) \\
&\le \PP(L^1+L^2\ge \E L_n -1.2\,b)+n^{-8\g^2/7}.\nonumber
\end{align}
Now Lemma~\ref{l:2chains} implies that $\E L^1+\E L^2 \le \E L_n -0.52
r_{i-1}^2\sqrt[3]n$, and hence
\begin{align}\label{eq:cont}
&\PP(L^1+L^2\ge \E L_n -1.2\,b)\nonumber \\
&\le \PP(L^1+L^2\ge \E L^1+\E L^2+0.52
r_{i-1}^2\sqrt[3]n -1.2\,b)\\
&\le \sum_{i=1,2}\PP(L^i\ge \E L^i+0.26 r_{i-1}^2\sqrt[3]n -0.6\, b)\nonumber \\
&\le \sum_{i=1,2}\PP(L^i\ge \E L^i+4b).\nonumber
\end{align}
Here the last step is justified by observing that $r_{i-1}\le
r_{-1}=-\rho+\tr$ and so for large enough $n$
\begin{align}\label{eq:folytat}
0.26\, r_i^2\sqrt[3]n &\geq  0.26\, n^{1/3}\big(3 \sqrt{2} \g^{1/2}n^{-1/12}(\log
n)^{1/4}  - n^{-1/3}\sqrt{\log n}  \big)^2 \nonumber \\
&> 4.6 \g\, n^{1/6}\sqrt{\log n}=4.6 \, b.
\end{align}

Next, we estimate $\PP(L^i\ge \E L^i+4b)$.
When $t_i = 2 A(T_i) \ge n^{-5/6}$, we use Theorem~\ref{concsub} with
$\tau=5/6$:
\begin{align*}
& \PP(L^i\ge \E L^i+4b) = \PP(L^i \geq \E  L^i +
4 \gamma \sqrt{\log n} \; n^{1/6} )\\
&\leq \PP(L^i
\geq \E L^i + 4\gamma \sqrt{\log n / \log (n t_i)} \, \sqrt{\log (n
t_i)} \; (n t_i)^{1/6} )
\\
&\leq (n t_i)^{-\gamma^2 8\log n / 7 \log (n t_i)}=
n^{-8\gamma^2/7}.
\end{align*}
The last inequality holds because of the Remark following
Theorem~\ref{concsub}, since
\[
1 \leq 4 \gamma \sqrt{\log n / \log (n t_i)} \leq \gamma \, 4
\sqrt{6}.
\]

Finally, when $t_i < n^{-5/6}$, the expected number of points in
$T_i$ is $t_i n < n^{1/6}$. So for the random variable $|T_i\cap
X_n|$ inequality (\ref{hoeffdingfelso}) implies that
\begin{align*}
\PP \big(\, |T_i\cap X_n| \geq  4 \gamma \sqrt{\log n} \; n^{1/6} \big)&\leq
\left(\frac{e \, t_i n}{4 \gamma \sqrt{\log n} \; n^{1/6}}   \right)^{4 \gamma
\sqrt{\log n} \; n^{1/6}}\\
\leq \left(\frac{e }{4 \gamma \sqrt{\log n}}   \right)^{n^{1/6}}&< n^{-8\gamma^2/7}
\end{align*}
for large enough $n$,  and hence
\[
\PP \big(L^i \geq \E L^i + 4 \gamma \sqrt{\log n} \; n^{1/6} \big) <
n^{-8\gamma^2/7}.
\]
Thus $\PP \big(L^i \geq \E L^i +  4b
\big)\le n^{-8\gamma^2/7}$ for $i=1,2$ in all cases.
\end{proof}

Now we handle the case of parabolas going above $\G_\rho$. Set
$R_i=\rho+i\de$ where $\de=n^{-1/2}\sqrt {\log n}$. We define
another series of parabolas:
\begin{equation}
\Ge_i=\G_{R_i}, \; i=-1,0,1,\dots,f
\end{equation}
where $f$ is limited by $\rho+f\de \le 3$. Thus $f \le n^{1/2}$,
say.

The following geometric lemma is similar to Lemma~\ref{l:finite}.

\begin{lemma}\label{l:finite+} There are points $p_{i,j}\in
\Ge_{i-1}, \; j=1,2,\dots,\J(i)$ with $\J(i)\le n^{1/2}$ such that
the following holds. For each convex chain $Y\subset X_n$
with $\Ge_{i+1}$ above $C(Y)$ but $\Ge_i$ not above $C(Y)$, there is a
$p_{i,j}$ such that the line $\ell(p_{i,j})$ is below $C(Y)$.
\end{lemma}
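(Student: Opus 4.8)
The plan is to follow the template of Lemma~\ref{l:finite}, replacing the continuum of chains that poke above $\Ge_i$ but stay below $\Ge_{i+1}$ by finitely many tangents of $\Ge_{i-1}$, one of which is forced below any given such chain. I would parametrize the tangents of $\Ge_{i-1}$ by the point $(a,b)\in\G$ (so $\sqrt a+\sqrt b=1$) whose homothetic image is the point of tangency; by the formula of Section~\ref{geometry} this tangent is $x/\sqrt a+y/\sqrt b=1+R_{i-1}$, of slope $-\sqrt{b/a}$ and $x=0$ height $(1+R_{i-1})\sqrt b$. The basic observation is that, $C(Y)$ being convex, a line of slope $m$ lies below $C(Y)$ precisely when it lies below the support line of $C(Y)$ of the same slope; thus for each slope it suffices to compare $x=0$ heights.

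Next I would extract a margin. If $\Ge_i$ is not above $C(Y)$, choose $x_0$ with $C(Y)(x_0)>\Ge_i(x_0)$, let $m^*$ be a slope of $C(Y)$ at $x_0$ and $s^*$ the matching parameter. Since $\Ge_i$ lies above all its tangents, the support line of $C(Y)$ of slope $m^*$ lies strictly above the tangent of $\Ge_i$ of slope $m^*$; and by homothety the latter lies exactly $(1-s^*)\de$ above the parallel tangent of $\Ge_{i-1}$. Hence the tangent of $\Ge_{i-1}$ of slope $m^*$ lies below $C(Y)$, with $x=0$ height deficit at least $(1-s^*)\de$. A convexity and pinning argument then localizes $s^*$: as $C(Y)$ lies below the chord $x+y=1$ while $\Ge_i$ exceeds that chord near both endpoints, a poking point forces $\Ge_i(x_0)<1-x_0$, confining $s^*$ to an interval bounded away from $0$ and $1$ by order $R_i\ge\rho$; thus the deficit is at least of order $\rho\de$.

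It then remains to discretize and count so that $\J(i)\le n^{1/2}$. I would take the parameters $s_j$ to form a net of the admissible interval of spacing comparable to $\de$, giving $O(1/\de)\le n^{1/2}$ points, and for a given bad chain use the net point nearest $s^*$. Writing $D(m)$ for the difference of $x=0$ heights (support line of $C(Y)$ minus tangent of $\Ge_{i-1}$), one has $D(m^*)\ge(1-s^*)\de$, while $D'(m)$ equals the horizontal distance between the tangency points, for the common slope, of $\Ge_{i-1}$ and of $C(Y)$. Since near $m^*$ the chain is caught in the $O(\de)$-wide band between $\Ge_i$ and $\Ge_{i+1}$, that distance is small, so $D$ remains nonnegative at the nearest net point and the corresponding $\ell(p_{i,j})$ lies below $C(Y)$.

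The crux is precisely this uniform comparison of the discretization error against the deficit. The deficit degenerates near the two endpoints, where $(1-s)\de\to0$ and the slope of the curve blows up, and a chain may poke above $\Ge_i$ only marginally and along a very short arc, so one cannot merely bound the individual sensitivities of the two support heights — these are of order one. The estimate I would have to make quantitative is that a bad chain is trapped in a band of width $O(\de)$ above $\Ge_i$ and, being a long chain, is essentially straight on the $\de$-scale, so the slope interval on which it dominates $\Ge_i$, and hence the deficit $D$, varies slowly enough that a net of size $O(1/\de)\le n^{1/2}$ already meets every such interval. Making this trade-off hold uniformly over $i$ and over the whole parameter range is the main obstacle.
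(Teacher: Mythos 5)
Your reduction to support lines is sound, and your first step is correct: if $\Ge_i$ is not above $C(Y)$, then some tangent line of $\Ge_i$ (hence the parallel tangent of $\Ge_{i-1}$, which lies strictly lower) is below $C(Y)$. But the entire content of the lemma is the discretization -- producing at most $n^{1/2}$ fixed tangents of $\Ge_{i-1}$, one of which is below \emph{every} bad chain -- and that step you explicitly defer as ``the main obstacle.'' Worse, the quantitative route you sketch does not close. Your deficit at the critical slope is of order $\rho\de$ (after the pinning argument), while a net of spacing $\de$ in the tangency parameter $t=\sqrt a$ moves the slope by about $\de/t^2$, which near the admissible endpoints ($t\sim\rho$) is $\de/\rho^2$; with only the trivial bound $|D'(m)|=O(1)$ the resulting error $\de/\rho^2$ overwhelms the deficit $\rho\de$, since $\rho\sim n^{-1/12}\to 0$. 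The rescue you propose -- that $D'$ is small because the chain is ``trapped in an $O(\de)$-band'' near the relevant slopes -- is not implied by the hypotheses: the event only forces $C(Y)$ below $\Ge_{i+1}$ and above $\Ge_i$ at one point; elsewhere the chain may dip far below $\Ge_i$, and a single long edge of slope near $m^*$ makes the touching point $x_{C(Y)}(m)$ jump by an amount of order one, so $D'$ is genuinely not small.

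The paper avoids this trap by never comparing slopes. It takes the smallest $R$ with $\G_R$ above $C(Y)$, uses the common tangent of $C(Y)$ and $\G_R$ to produce one tangent line of $\Ge_i$ below $C(Y)$, and then proves a purely geometric covering statement: every relevant tangent of $\Ge_i$ lies, within $T$, above one of the $\J(i)$ chosen tangents $\ell(p_{i,j})$ of $\Ge_{i-1}$. The covering is built as a staircase in the \emph{axis intercepts} $A_j,B_j$ of the tangent lines, where Corollary~\ref{erinto} gives the exact, parameter-independent shift $|A_jA_{j+1}|=|B_jB_{j+1}|=\de$, and ``line above line'' is certified simply by dominance of both intercepts. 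This makes the step size uniform over the whole range (including near the endpoints where your margin degenerates) and yields $\J(i)\le 2(2\de)^{-1}\le n^{1/2}$ with no perturbation analysis of the chain at all. To repair your argument you would need to reorganize it along these lines; as written, there is a genuine gap.
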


\begin{proof}[\bf{Proof.}]For such a long chain $Y$ there is a smallest
$R>\rho$ with $\G_R$ above $C(Y)$. Then $C(Y)$ and
$\G_R$ have a common point and a common tangent $\ell$ at
that point (because both $C(Y)$ and $\G_R$ are convex). Let
$p$ be the point on $\Ge_i$ such that the line $\ell(p)$, tangent at
$p$ to $\Ge_i$, is parallel with $\ell$. It is evident that $C(Y)$
is above $\ell(p)$.

Let $L$ denote the set of lines that are tangent to $\Ge_i$ and
that have both $(0,0)$ and $(1,1)$ above it. We will construct a
set of points $p_{i,j}\in \Ge_{i-1}$ such that each line in $L$ is
above the segment $\ell(p_{i,j}) \cap\, T$ for some
$j=1,2,\dots,\J(i)$. This construction then guarantees what the
lemma requires.

We need one more piece of notation. Given $p_{i,j}$ let
$[A_j,B_j]$ be the segment $T\cap \ell(p_{i,j})$, with $A_j$ on
the $x$-axis and $B_j$ on the $y$-axis. We shall construct the
sequence of the $A_j$'s and $B_j$'s.

The construction starts with $p_{i,1}$ at the midpoint of
$\Ge_{i-1}$ and we define first the other $p_{i,j}$ with $A_1$
closer to the origin than $A_j$. See Figure~\ref{parab4}~a).
Assume $p_{i,j}$ has been found. There is a unique tangent,
$\ell$, to $\Ge_i$ passing through $B_j$. Let $A_{j+1}$ be the
intersection point of $\ell$ with the $x$-axis, and $p_{i,j+1}$
the common point of $\Ge_{i-1}$ with the tangent to $\Ge_{i-1}$
through $A_{j+1}$. The construction is finished when we reach
$x(A_j)<0$, here $x(A_j)$ denotes the $x$-coordinate of $A_j$.
Corollary~\ref{erinto} implies that
\[
|A_j A_{j+1}|= |B_j B_{j+1}|= (1+R_i) - (1+R_{i-1})=\de.
\]
Since $x(A_1)<1/2$, we reach $x(A_j)<0$ after at most $(2\de)^{-1}$ steps.

The construction satisfies what we need: if a tangent to $\Ge_i$
intersects the triangle in the segment $[A,B]$ with $A$ on the $x$
axis and $x(A)\in [0,1/2]$, then $A$ is between $A_{j+1}$ and
$A_j$ for some $j$, and the segment $[A,B]$ is above the segment
$\ell(p_{i,j})\cap T$.

The construction is extended to the other half of $\Ge_{i-1}$
symmetrically, and $\J(i) \le 2(2\de)^{-1}\le n^{1/2}$ follows.
\end{proof}

\begin{figure}[h]
\epsfxsize = \textwidth \centerline{\epsffile{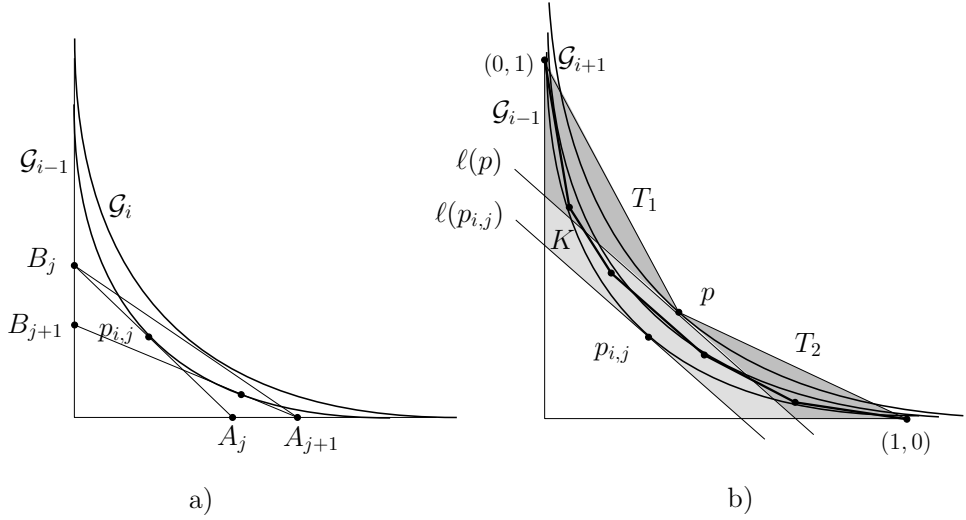}}
  \caption{Long chains reaching above $\Gamma$}
  \label{parab4}
\end{figure}

Next we define $\Ge_i^*$ ($i=0,1,\dots,f-1$) to be the event that
there is a long convex chain $Y\subset X_n$ such that $\Ge_{i+1}$
is above $C(Y)$ but $\Ge_i$ is not above $C(Y)$,
$i=0,1,\dots,f-1$. Further, let $\Ge_{i,j}^*$ be the event there
is a long convex chain $Y \subset X_n$ with $C(Y)$ below
$\Ge_{i+1}$ but not below $\ell(p_{i,j})$ (remember that $p_{i,j}
\in \Ge_{i-1}$). Here $i=0,\dots,f-1$ and $j=1,\dots,\J(i)$. We
have now the following result, similar to Lemma~\ref{l:Gij}.

\begin{lemma}\label{l:Fij}For every $i=0,\dots,f-1$ and every
$j=1,\dots \J(i)$, $\PP(\Ge_{i,j}^*)\le 3n^{-8\g ^2/7}$.
\end{lemma}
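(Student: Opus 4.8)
My plan is to follow the proof of Lemma~\ref{l:Gij} essentially line by line, replacing each ingredient by its ``above $\G$'' counterpart. Let $T_1,T_2$ be the two triangles that the tangent $\ell(p_{i,j})$ cuts off $T$ at the vertices $(0,1)$ and $(1,0)$, and let $K=K_{i,j}$ be the thin region between $\ell(p_{i,j})$ and $\Ge_{i+1}$ in which a chain lies above $\ell(p_{i,j})$. A long chain $Y$ witnessing $\Ge_{i,j}^*$ then splits as $Y_1=T_1\cap Y$, $Y_2=T_2\cap Y$ and $Y_3=K\cap Y$: the first two are convex chains in $T_1,T_2$, and $Y_3$ is in convex position in $K$. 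Writing $L^1,L^2$ for the longest chains in $T_1,T_2$, this gives $|Y_1|\le L^1$, $|Y_2|\le L^2$, $|Y_3|\le M(K,n)$, and since $Y$ is long, $L^1+L^2+M(K,n)\ge \E L_n-b$.

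First I would dispose of the excursion term. By \eqref{eq:dist} with $r=2\de$, the tangent $\ell(p_{i,j})$ to $\Ge_{i-1}$ and the parabola $\Ge_{i+1}$ are at distance $O(\de)$, so $K$ lies in a strip of width $O(\de)$ and $\A(K)=O(\de)$; hence $1920\,e^2\A(K)\,n=O(n^{1/2}\sqrt{\log n})$, which stays below $(b/5)^3=\g^3 n^{1/2}(\log n)^{3/2}/125$. Lemma~\ref{l:M>m} with $\mu=b/5$ then gives, exactly as in \eqref{eq:last}, $\PP(M(K,n)\ge b/5)<n^{-8\g^2/7}$, so that $\PP(L^1+L^2+M(K,n)\ge \E L_n-b)\le \PP(L^1+L^2\ge \E L_n-1.2\,b)+n^{-8\g^2/7}$.

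For the main term I would apply Lemma~\ref{l:2chains} with $r=R_{i-1}$, which is admissible because $R_{i-1}\ge\rho-\de\ge n^{-1/12}$ for large $n$; this yields $\E L^1+\E L^2\le \E L_n-0.52\,R_{i-1}^2\sqrt[3]n$, and the computation of \eqref{eq:folytat}, now with the even smaller $\de$ in place of $\tr$, gives $0.26\,R_{i-1}^2\sqrt[3]n>4.6\,b$. Thus $\PP(L^1+L^2\ge \E L_n-1.2\,b)\le\sum_{i=1,2}\PP(L^i\ge \E L^i+4b)$, and each tail is at most $n^{-8\g^2/7}$ exactly as in Lemma~\ref{l:Gij}: via Theorem~\ref{concsub} with $\tau=5/6$ when $t_i=2\A(T_i)\ge n^{-5/6}$, and via the upper tail \eqref{hoeffdingfelso} for $|T_i\cap X_n|$ (whose mean is then below $n^{1/6}$) when $t_i<n^{-5/6}$. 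Summing the three contributions gives $\PP(\Ge_{i,j}^*)\le 3\,n^{-8\g^2/7}$.

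The step I expect to be the real obstacle, and the only genuinely new point, is the area estimate $\A(K)=O(\de)$; it also explains why the spacing here is $\de$ rather than $\tr$. In Lemma~\ref{l:Gij} the tangent to $G_{i-1}$ met $G_i$ in a short chord $\la(q_{i,j})$ of length $O(\sqrt{\tr})$, confining the excursion to a cap of area $O(\tr^{3/2})$. Here the $\Ge$-parabolas are nested and $\ell(p_{i,j})$ lies \emph{entirely} below $\Ge_{i+1}$, meeting no short chord, so a convex chain can stay just above $\ell(p_{i,j})$ over a stretch of length $O(1)$; consequently $K$ is a full-length strip of width $O(\de)$, of area $\A(K)=O(\de)$ rather than $O(\de^{3/2})$. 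It is precisely to keep $1920\,e^2\A(K)\,n=O(n^{1/2}\sqrt{\log n})$ inside the budget $(b/5)^3=\Theta(n^{1/2}(\log n)^{3/2})$ of Lemma~\ref{l:M>m} (and simultaneously to keep $\J(i)\le\de^{-1}\le n^{1/2}$) that one takes $\de=n^{-1/2}\sqrt{\log n}$. Once this area bound is in hand, the argument is the transcription above.
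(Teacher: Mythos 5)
There is a genuine gap, and it sits exactly where you predicted it would: the area bound for $K$. You take $K$ to be the region between the line $\ell(p_{i,j})$ and the parabola $\Ge_{i+1}$ and claim it is a strip of width $O(\de)$ because, by \eqref{eq:dist}, the two are ``at distance $O(\de)$''. But \eqref{eq:dist} bounds the distance between \emph{parallel tangent lines} of two parabolas; it controls the gap between $\ell(p_{i,j})$ and $\Ge_{i+1}$ only at the point of closest approach. Away from that point the parabola curves away from the line quadratically, so at distance $d$ from $p_{i,j}$ along the line the gap is of order $\de+d^2$, and the region of $T$ above $\ell(p_{i,j})$ and below $\Ge_{i+1}$ has area $\Theta(1)$, not $O(\de)$. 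With $\A(K)=\Theta(1)$ the hypothesis $1920\,e^2\A(K)n\le(b/5)^3$ of Lemma~\ref{l:M>m} fails badly (and indeed $M(K,n)$ is then typically of order $n^{1/3}\gg b/5$), so the excursion term cannot be disposed of this way. If instead you shrink $K$ to a genuine strip of width $O(\de)$ around $\ell(p_{i,j})$, the covering $Y\subset T_1\cup T_2\cup K$ is no longer justified: with $T_1,T_2$ cut off by $\ell(p_{i,j})$ at $p_{i,j}\in\Ge_{i-1}$, a chain that is below $\Ge_{i+1}$ may still pass strictly above $p_{i,j}$, and then it has points in the interior of $\conv\{p_0,p_{i,j},p_2\}$, above both chords, that lie neither in $T_1\cup T_2$ nor within $O(\de)$ of the line.

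The paper repairs both problems with one move: it takes $\ell(p)$ to be the tangent to $\Ge_{i+1}$ \emph{parallel} to $\ell(p_{i,j})$, with tangency point $p$, defines $T_1,T_2$ as the triangles determined by $p$ and $\ell(p)$, and lets $K$ be the part of $T$ between the two parallel lines $\ell(p_{i,j})$ and $\ell(p)$. That $K$ really is a strip, of width at most $2\de/\sqrt8$, whence $\A(K)\le\de$; and since the chain is below $\Ge_{i+1}$ and $p\in\Ge_{i+1}$, every point of the chain lying above $\ell(p)$ must be in $T_1\cup T_2$ (a supporting line of the chain at an interior point of $\conv\{p_0,p,p_2\}$ would force the chain above $p$, contradicting that it stays below $\Ge_{i+1}$), so the three-way split is valid. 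Lemma~\ref{l:2chains} is then applied with $r=R_{i+1}$ rather than your $R_{i-1}$; either value clears the threshold $n^{-1/12}$, and the remainder of your transcription of Lemma~\ref{l:Gij} would go through once the decomposition is set up this way.
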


This lemma immediately implies the following corollary.

\begin{corollary}\label{cor2}
The probability that there is a long convex chain $Y \subset X_n$
such that $C(Y)$ is not below $\G_{\rho}$ is at most
$3n^{-\g^2/7}$.
\end{corollary}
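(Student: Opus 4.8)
The plan is to reduce the statement to the already-proved Lemmas~\ref{l:finite+} and \ref{l:Fij} by a covering argument, entirely parallel to the proof of Corollary~\ref{cor1}. The first thing I would record is that the family $\Ge_0,\Ge_1,\dots,\Ge_f$ traps every convex chain from above. Indeed, $R_f=\rho+f\de$ is taken as large as the constraint $\rho+f\de\le 3$ permits, and since $\de=n^{-1/2}\sqrt{\log n}\to 0$ we have $R_f\to 3$; in particular $R_f\ge 1$ for large $n$. For any $(x,y)\in T$ one has $\sqrt x+\sqrt y\le\sqrt{2(x+y)}\le\sqrt 2\le\sqrt{1+R_f}$, so every point of $T$ lies below $\Ge_f=\G_{R_f}$. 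As $C(Y)\subset T$ for any long convex chain $Y$, the parabola $\Ge_f$ is above $C(Y)$. Unlike the ``below'' case treated by Corollary~\ref{cor1}, this means no boundary event analogous to $H$ is required here.

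Next I would run the telescoping. Suppose there is a long convex chain $Y$ with $C(Y)$ not below $\G_\rho=\Ge_0$. Since $\Ge_f$ is above $C(Y)$ while $\Ge_0$ is not, there is a smallest index $i+1\in\{1,\dots,f\}$ for which $\Ge_{i+1}$ is above $C(Y)$; by the minimality $\Ge_i$ is not above $C(Y)$, so the event $\Ge_i^*$ occurs for some $i\in\{0,\dots,f-1\}$. Lemma~\ref{l:finite+} then provides the containment $\Ge_i^*\subset\bigcup_{j=1}^{\J(i)}\Ge_{i,j}^*$, whence the event whose probability we must bound is contained in $\bigcup_{i=0}^{f-1}\bigcup_{j=1}^{\J(i)}\Ge_{i,j}^*$.

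Finally I would apply the union bound together with Lemma~\ref{l:Fij} and the cardinality estimates $f\le n^{1/2}$ and $\J(i)\le n^{1/2}$. The number of pairs $(i,j)$ is at most $f\cdot\max_i\J(i)\le n^{1/2}\cdot n^{1/2}=n$, so the probability is at most $n\cdot 3n^{-8\g^2/7}=3n^{1-8\g^2/7}$. The hypothesis $\g\ge 1$ gives $\g^2\ge 1$, hence $1-8\g^2/7\le-\g^2/7$, and the bound collapses to $3n^{-\g^2/7}$, as claimed. I expect essentially no obstacle in this corollary; the only step demanding a moment's care is the very first reduction, namely checking that $\Ge_f$ genuinely lies above all of $T$ so that the smallest crossing index exists, and everything thereafter is bookkeeping identical to Corollary~\ref{cor1}.
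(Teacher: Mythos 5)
Your proposal is correct and follows essentially the same route as the paper: the paper also deduces Corollary~\ref{cor2} by a union bound from $\Ge_i^*\subset\bigcup_{j\le \J(i)}\Ge_{i,j}^*$, $f\le n^{1/2}$, $\J(i)\le n^{1/2}$, Lemma~\ref{l:Fij}, and $\g\ge 1$. Your explicit check that $\Ge_f$ lies above all of $T$ (so that a smallest crossing index exists and no analogue of the event $H$ is needed) is a detail the paper leaves implicit, and it is verified correctly.
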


The proof follows from the facts that $\Ge_i^* \subset
\bigcup_{j\in \J(i)}\Ge_{i,j}^*$, $f \le n^{1/2}$, $\J(i)\le
n^{1/2}$, and $\g \geq 1$. Now we give the proof of Lemma~\ref{l:Gij} which is
analogous to that of Lemma~\ref{l:Gij}.

\begin{proof}[{\bf Proof }{\rm of Lemma~\ref{l:Fij}}.]
Let $\ell(p)$ be the unique tangent to $G_{i+1}$ which is parallel
with $\ell(p_{i,j})$, and $p$ be the common point of $\ell(p)$ and
$\G_{i+1}$, see Figure~\ref{parab4}~b). Let $T_1,T_2$ be the two
triangles determined by $p$ and $\ell(p)$, and let $K=K_{i,j}$ be
the part of $T$ that lies between $\ell(p_{i,j})$ and $\ell(p)$.
Since the distance of these two lines is at most $2\de/\sqrt 8$,
$A(K) \le \de$.

A long convex chain $Y\subset X_n$ which is below $\Ge_{i+1}$ but
not below $\ell(p_{i,j})$ splits into 3 parts: $Y_1=T_1\cap Y$,
$Y_2=T_2\cap Y$, and $Y_3=K\cap Y$. Here $Y_1,Y_2$ are convex
chains in $T_1$ (from $(0,1)$ to $p$) and in $T_2$ (from
$p$ to $(1,0)$), and $Y_3$ is in convex position in $K$. So
\[
|Y_1| \le L^1,\; |Y_2| \le L^2, \mbox{ and } |Y_3| \le M(K,n).
\]
Since $Y$ is a long convex chain, $|Y_1|+|Y_2|+|Y_3| \ge |Y|\ge \E
L_n-b$, and so $L^1+L^2+M(K,n) \ge \E L_n-b$. We are going to show
that this event has small probability.

We apply Lemma~\ref{l:M>m} again with $\mu=b/5$. For sufficiently large $n$ the
condition
$1920\, e^2\,A(K)n \le (b/5)^3$ is satisfied, since $A(K) \le \de
= n^{-1/2}\sqrt{\log n}$ and $(b/5)^3=\g^3 n^{1/2} (\log n)^{3/2}/125$. So we have,
just as in (\ref{eq:last}),
\[
\PP(M(K,n)\ge b/5)< n^{-8\gamma^2 /7}.
\]
Therefore the estimate (\ref{eq:start}) applies without change:
\begin{equation*} \PP(L^1+L^2+M(K,n) \ge \E L_n -b)
\le \PP(L^1+L^2\ge  \E L_n -1.2 \, b)+n^{-8 \g^2 /7}.
\end{equation*}
Now Lemma~\ref{l:2chains} implies that $\E L^1+\E L^2 \le \E L_n -0.52
R_{i+1}^2\sqrt[3]n$, and just as in (\ref{eq:cont}),
\begin{align*}\label{eq:cont}
\PP(L^1 + L^2 \geq \E L_n - 1.2 \, b)
&\leq \sum_{i=1,2}\PP(L^i\ge \E L^i+0.26 R_{i+1}^2\sqrt[3]n -0.6 \, b)\\
&\leq \sum_{i=1,2}\PP(L^i\ge \E L^i+4b).
\end{align*}
Here the last step is justified just as in (\ref{eq:folytat})
except that this time $R_{i+1}\ge R_{1}=\rho+\de$. Finally, we
bound $\PP(L^i\ge \E L^i+4b)$ the same way as in the proof of
Lemma~\ref{l:Gij} to obtain
\[
\PP(L^i\ge \E L^i+4 b )\le n^{-8\g^2/7}.
\qedhere
\]
\end{proof}

\begin{proof}[{\bf Proof }{\rm of Theorem~\ref{limsh}}.]
Considering Proposition~\ref{pr:dist}, we have to estimate the
probability that there is a {\em longest} convex chain not lying
between $\G_{-\rho}$ and  $\G_{\rho}$. This event splits into two
parts: either the longest convex chain is not long, or there is a
long convex chain not between $\G_{-\rho}$ and  $\G_{\rho}$. The
probability of the first event is estimated by
Theorem~\ref{conc1}, while the second part is handled via
Corollaries \ref{cor1} and \ref{cor2}. Therefore the probability
in question is at most
\begin{equation*}
n^{-\g^2/14}+  n^{-\g^2/6}+ 6 n^{-\g^2/7}<2n^{-\g^2/14}. \qedhere
\end{equation*}
\end{proof}

{\bf Remarks. } In this proof one can avoid using the estimate on
$M(K,\mu)$. In fact, choosing $\delta$ and $\triangle$ small
enough, the set $K$ contains more than $b/5$ points of $X_n$ with
very small probability. So, with high probability, it cannot add
much to the size of a long convex chain. There are more events
$G_i^*$ and $G_{i,j}^*$, which has a minor effect on the final
result. Also, the triangle $S$ in Lemma~\ref{l:H} is to be chosen
much smaller.

An important step in our proof is Lemma~\ref{l:2chains},
essentially implying that if the distance between $\G$ and the
farthest point of a convex chain from $\G$ is ``large'', then the
chain cannot be too long. Conditioning on the location of this
farthest point would allow an elegant conditional expectation
argument. However, fixing the farthest point modifies the
underlying probability space and therefore the estimate coming
from Lemma~\ref{l:2chains} is no longer valid. To eliminate this
difficulty, we chose to define finitely many subcases and estimate
them separately, which can also be considered as a finite
approximation of the continuous conditional expectation.

\section{Numerical experiments}\label{experi}

In the final section we summarize the observations obtained by
computer simulations.

The search for the longest convex chains can be accomplished by an
algorithm which has running time $O(n^2)$. This algorithm works as
follows. We order the points by increasing $x$ coordinate, and then
recursively create a list at each point. The $k$th element on the
list at point $p$ contains the minimal slope of the last segment of
chains starting at $p_0$ and ending at $p$ whose length is exactly
$k$, and a pointer to the other endpoint of this last segment. For
creating the list at the next point $p$, we have to search the
points before $p$, and see if $p$ can be added to the chains while
preserving convexity.

This algorithm can be speeded up with some (not fully justified
but useful) tricks. First of all, Theorem~\ref{limsh} guarantees
that we have to search only among the points close to $\G$. The
simulations show that most longest convex chains are located in a
small neighbourhood of $\G$, whose radius is in fact of order
approximately $n^{-1/3}$, much smaller than the width of order
$n^{-1/12}$ given by Theorem~\ref{limsh}. Therefore the search can
be restricted to a subset of the points with cardinality of order
$n^{2/3}$. Second, when looking for the longest chain, we have to
search only points relatively close to $p$, and chains which are
already relatively long, thus reducing memory demands.
\begin{table}[!h] \label{tabla}
\newcommand\TT{\rule{0pt}{2.6ex}}
\newcommand\BB{\rule[-1.2ex]{0pt}{0pt}}
\begin{tabular}{c| c | c | c | c }
$n$ & $n^{-1/3} \E L_n$ \BB & $d_n$
&Distance$ /\sqrt{2}$ & Deviation
\\
\hline
1000 \TT & 2.532 & 4 & 0.270 & 1.254\\
10000& 2.768 & 5 & 0.200 & 1.383\\
15625& 2.813 & 5 & 0.150 & 1.293\\
50000& 2.885 & 5 & 0.100 & 1.411\\
75000& 2.906 & 5 & 0.070 & 1.580\\
100000& 2.917& 5 & 0.060 & 1.431\\
125000& 2.926 & 5 & 0.050 & 1.637\\
421875& 2.959 & 5 & 0.012 & 1.732\\
1000000 \BB & 2.976 & 6 & 0.012 & 2.023\\
\end{tabular}
\\[10pt]
\caption{Results obtained by the simulation}
\end{table}

With the above method, the search can be executed for up to
$5\cdot 10^4$ active points, in which case examining one sample
takes about 2 minutes. As the experiments show, this provides a
good approximation for $n$'s up to order $10^6$. In each
experiment, we increased the width of the searched neighbourhood
until the increment did not generate a significant change in the
average length of the longest convex chain. The results obtained
by this method, although giving only a lower bound for $\E L_n$,
are heuristically close to it. $\alpha=3$.
\begin{figure}[h!]
\epsfxsize =0.4\textwidth \centerline{\epsffile{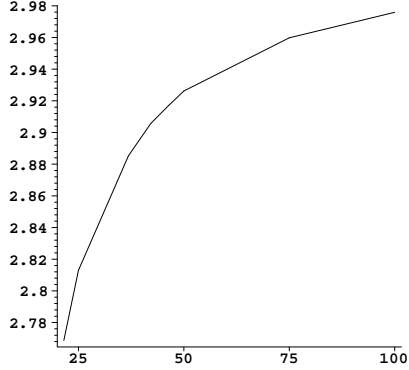}}
  \caption{Results for $n^{-1/3} \E L_n$, illustrated as a function of $n^{1/3}$. }
  \label{varh}
\end{figure}

Our largest search has been done for $n= 10^6$. The number of
samples was $250$ except for the cases $n=25^3$ and $n=10^6$, where
we used $500$ samples in order to model the distribution of $L_n$
(see Figure~\ref{eloszl}).

The results below well illustrate what the proof of
Theorem~\ref{limit} suggests, namely, that $ n^{-1/3} \E L_n$ is
increasing with $n$. Also, the data seem to confirm that $\alpha=3$.
\begin{figure}
\centering
\includegraphics[width=0.4 \textwidth]{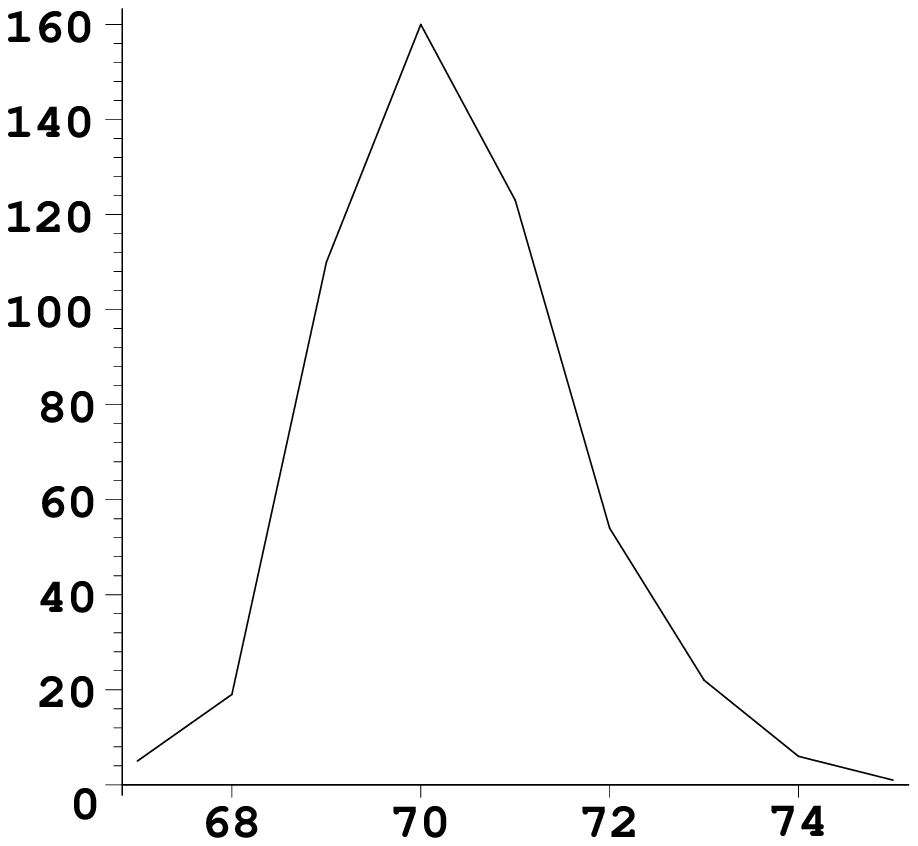}
\hspace{1 cm}
\includegraphics[width=0.4 \textwidth]{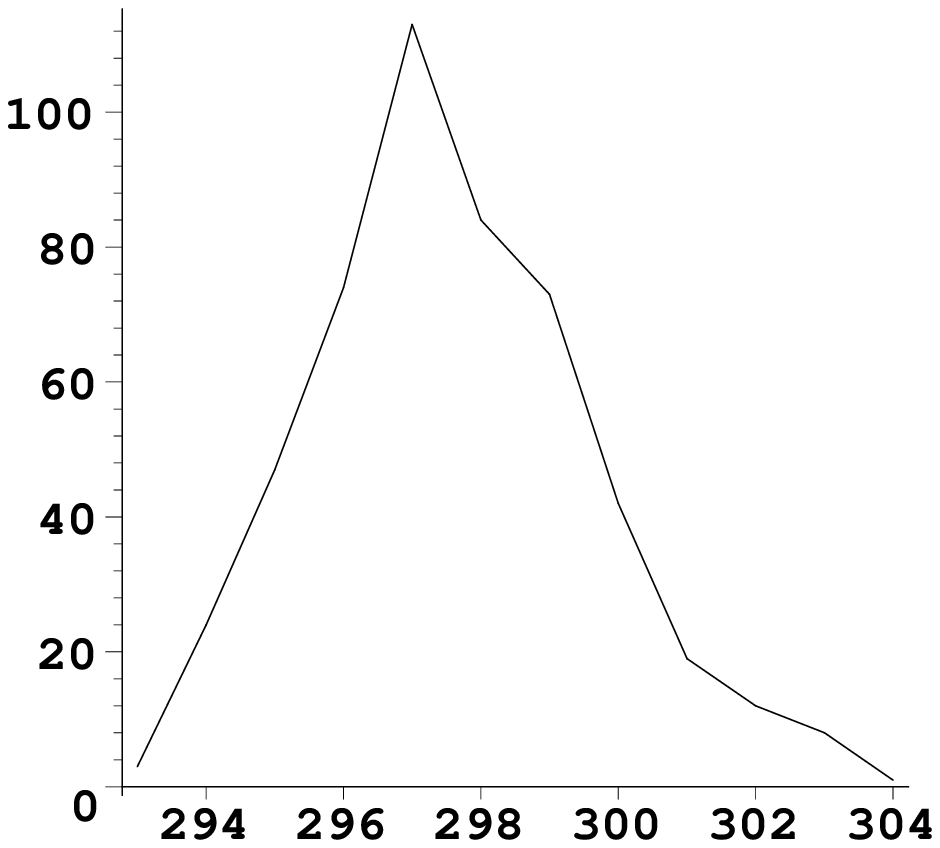}
\caption{Distribution of  $L_n$, 500 samples, $n=25^3$ and
$n=10^6$.} \label{eloszl}
\end{figure}

On Table 1 we list the results obtained by the program. The first
column is the number of points chosen in $T$, the second is the
average of  $n^{-1/3}L_n$. The third column contains the half-length
of the interval of the values of $L_n$, that is, $d_n= \lfloor \max
|L_n - \E L_n|\rfloor $. This is noticeably small even for $n=10^6$.
In the fourth column we list $1/\sqrt{2}$ times the radius of the
neighbourhood of parabola we used for the search (the term
$\sqrt{2}$ comes from a transformation of coordinates). The last
data are the standard deviation of the set of values of $L_n$, ie.
the square-root of its variance.

Figure~\ref{varh} illustrates the linear interpolation of $n^{-1/3}
\E L_n $ as a function of $n^{1/3}$. It is based on the data shown
on Table 1.

As we know from Theorem~\ref{conc1}, $L_n$ is highly concentrated
near its expectation. This phenomenon is well recognizable on
Figure~\ref{eloszl}, where we plot the distribution in the cases
$n=25^3(=15625)$ and $n=10^6$ with $500$ samples.

\section{Acknowledgements}
We express our special thanks to G\'abor Tusn\'ady for his
constant attention and interest in this piece of work, for
valuable ideas concerning computer simulations, and in particular
for pointing out an error in the earlier version of this paper. We
also thank Zolt\'an Kov\'acs for his suggestions regarding the
implementation of the program. The second author was supported by
Hungarian National Foundation Grants T 60427 and T 62321. Finally,
we dedicate this piece of work to the memory of the late Professor
S\'andor Cs\"org\H o, whose zest for life and enthusiasm for
mathematics will always be a constant inspiration to us.

\vspace{0.5 cm}

\begin{minipage}[t]{0.5 \textwidth}
{\sc Gergely Ambrus}
\\
  {\footnotesize Department of Mathematics\\[-1.5mm]
  University College London\\[-1.5mm]
  Gower Street, London WC1E 6BT\\[-1.5mm]
  England, U.K. \\[-1.5mm]
  and\\[-1.5mm]
  Bolyai Institute\\[-1.5mm]
  University of Szeged\\[-1.5mm]
  Aradi v\'ert. tere 1, 6720 Szeged\\[-1.5mm]
  Hungary\\[-1.5mm]
  e-mail: {\tt g.ambrus@ucl.ac.uk}
       }
\end{minipage}
\begin{minipage}[t]{0.5 \textwidth}
 {\sc Imre B\'ar\'any}
\\
  {\footnotesize
  R\'enyi Institute of Mathematics\\[-1.5mm]
  Hungarian Academy of Sciences\\[-1.5mm]
  PO Box 127, 1364 Budapest\\[-1.5mm]
  Hungary\\[-1.5mm]
  and\\[-1.5mm]
  Department of Mathematics\\[-1.5mm]
  University College London\\[-1.5mm]
  Gower Street, London WC1E 6BT\\[-1.5mm]
  England, U.K. \\[-1.5mm]
  e-mail: {\tt barany@renyi.hu}
       }
\end{minipage}
\end{document}